\theoremstyle{theorem}
\newtheorem{theorem}{Theorem}[section]
\newtheorem{corollary}[theorem]{Corollary}
\newtheorem{lemma}[theorem]{Lemma}
\newtheorem{proposition}[theorem]{Proposition}
\newtheorem{example}{Example}[section]
\newtheorem{remark}{Remark}[section]
\numberwithin{equation}{section}
\newcommand{\s}{\sigma}
\title{Algebraically closed $\s$-fields} 
\author{Masood Aryapoor\\
\tiny{\textit{Division of Mathematics and Physics}}\\
\tiny{\textit{M\"{a}lardalen  University}}\\
\tiny{\textit{Hamngatan 15, 632 17, Eskilstuna, 
Sweden
}}

%\footnote{masood.aryapoor@mdh.se}
}
 \date{}
\begin{document}
 \maketitle
\begin{abstract}
\noindent
The concept of a skew root of a skew polynomial is used to introduce notions of algebraic closedness for $\s$-fields, that is, a field equipped with an endomorphism. It is shown that every $\s$-field can be embedded in algebraically closed $\s$-fields of different types.     \\
%\textit{Keywords:Skew polynomial, Skew root, $\s$-field.} 
\end{abstract}
%%%%%%%%%%%%%%%%%%%%%%%%%%%%%%%%%%%%%%%%%%%%%%%%%%%%%%%%%%%%%%%%
%%%%%%%%%%%%%%%%%%%%%%%%%%%%%%%%%%%%%%%%%%%%%%%%%%%%%%%%%%%%%%%%
%%%%%%%%%%%%%%%%%%%%%%%%%%%%%%%%%%%%%%%%%%%%%%%%%%%%%%%%%%%%%%%%
%%%%%%%%%%%%%%%%%%%%%%%%%%%%%%%%%%%%%%%%%%%%%%%%%%%%%%%%%%%%%%%%
\begin{section}{Introduction} 
Skew polynomials were first studied by Ore \cite{ore1933theory}. Since Ore's article was published in 1933, skew polynomials have been the subject of study by many  authors. In \cite{lam1985general},  Lam introduced the notion of a skew root of a skew polynomial in order to study a generalization of the Vandermonde matrix (see also \cite{LamLeroy1988}). This notion allows us to define a concept of algebraicity which has been used by Lam and Leroy to formulate and solve some problems regarding skew polynomials, see  \cite{lam2004wedderburn,lam1994hilbert,lamleroy1988algebraic}.  The main goal of this paper is to study this concept of  algebraicity in the context of field extensions. 

The paper is organized as follows. Section 2 deals with preliminaries and elementary problems such as interpolation for skew polynomials and Vieta's formula for skew polynomials. In Section 3, we study field extensions from the perspective of skew polynomials. It is shown that for any nonconstant skew polynomial, there is an extension field in which the given skew polynomial has a skew root. Section 4 deals with the main subject of this paper, namely, algebraically closed $\s$-fields. It turns out that there are different notions of  "algebraic closedness" in the context of skew roots, some of which are introduced in this section, and moreover, it is shown that they exit.  Finally, in Section 5, we discuss the notion of a partial fraction decomposition for skew polynomials.  It is shown that an arbitrary  "skew rational  functions" over an algebraically closed $\s$-field has a relatively simple partial fraction decomposition.  
       
We conclude the introduction with some remarks. In this paper, all rings are assumed  to be associative and contain an identity element. In general, a skew polynomial is an element of a skew polynomial ring $R[T;\sigma,\delta]$, where $R$ is a (not necessarily commutative) ring, $\sigma:R\to R$ is an endomorphism
 and $\delta:R\to R$ is a $\s$-derivation. However, in this paper, we work exclusively with skew polynomial rings $R[T;\sigma,\delta]$ for which $R$ is a commutative field and $\delta$ is the zero map. Given a set $X$, we denote the identity map of $X$ by $1_X$. For a subset $X$ of a ring $R$, the notation $X^*$ stands for $X\setminus \{0\}$.

\end{section} 
%%%%%%%%%%%%%%%%%%%%%%%%%%%%%%%%%%%%%%%%%%%%%%%%%%%%%%%%%%%%%%%%
%%%%%%%%%%%%%%%%%%%%%%%%%%%%%%%%%%%%%%%%%%%%%%%%%%%%%%%%%%%%%%%%

%%%%%%%%%%%%%%%%%%%%%%%%%%%%%%%%%%%%%%%%%%%%%%%%%%%%%%%%%%%%%%%%
%%%%%%%%%%%%%%%%%%%%%%%%%%%%%%%%%%%%%%%%%%%%%%%%%%%%%%%%%%%%%%%%
%%%%%%%%%%%%%%%%%%%%%%%%%%%%%%%%%%%%%%%%%%%%%%%%%%%%%%%%%%%%%%%%
%%%%%%%%%%%%%%%%%%%%%%%%%%%%%%%%%%%%%%%%%%%%%%%%%%%%%%%%%%%%%%%%

\begin{section}{Skew polynomials and skew roots}
	This section is devoted to  preliminaries and some elementary results regarding skew polynomials and skew roots.  We begin by reviewing basic facts about the ring of skew polynomials over a field, see \cite[Chapter 2]{cohn1995encyclopedia} for more details. Let $K$ be a (commutative) field and $\s:K\to K$ be an endomorphism. The pair $(K,\s)$ is called a $\s$-field. A (nonzero) skew polynomial $P(T)$ over $K$ is a "polynomial" of the form
	$$P(T)=a_0+a_1T+\cdots+a_n T^n,$$
	where $n\geq 0$ and $a_0,...,a_n\in K$ with $a_n\neq 0$. The number $n$, denoted by  $\deg P$, is called the degree of $P(T)$. We usually use the word "polynomial" instead of the expression "skew polynomial"  if there is no risk of confusion. The set of all skew polynomials over $K$ is denoted by $K[T;\sigma]$. This set has a ring structure which resembles the ring of ordinary polynomials. A crucial difference is that $Ta=\s(a)T$ for all $a\in K$. It is known that Euclidean algorithm for right division holds $K[T;\sigma]$, a fact which has a number of consequences: $K[T;\sigma]$ is a left principal ideal domain; $K[T;\sigma]$ is a left Ore domain whose field of fractions is denoted by    $K(T;\sigma)$. Elements of $K(T;\sigma)$ can be expressed as $P(T)^{-1}Q(T)$ where $0\neq P(T), Q(T)\in K[T;\sigma]$. Besides the rings $K[T;\sigma]$ and $K(T;\sigma)$, one has the ring of "skew power series" $K[[T;\sigma]]$, and if $\s$ is an automorphism, the skew field $K((T;\sigma))$ of Laurent series over $(K,\s)$  whose elements are of the form 
	$$a_0+a_1T+\cdots+a_n T^n+\cdots,
	$$
	and 
	$$a_{-m}T^{-m}+\cdots+a_{-1}T^{-1}+a_0+a_1T+\cdots+a_n T^n+\cdots,
	$$
	respectively. We  have the following inclusions: 
	$$
	\begin{matrix}
		K[T;\sigma] & \subset & K[[T;\sigma]]\\
		\cap & & \cap\\
		K(T;\sigma)  & \subset & K((T;\sigma))
	\end{matrix}
	$$
	If $\s:K\to K$ is an automorphism, then $K[T;\sigma]$ is a principal ideal domain, that is, it is both a left and a right principal ideal domain. Moreover, any $P(T)\in K[T;\sigma]$  can be written as a "right polynomial":
	$$P(T)=b_0+T b_1+\cdots+T^n b_n,$$
	where  $b_0,...,b_n\in K$. In fact, if $P(T)$, as a left polynomial, has the expression 
	 $$P(T)=a_0+a_1T+\cdots+a_n T^n,$$ then $P(T)$, as a right polynomial, has the expression
	 $$P(T)=a_0+T\s^{-1}(a_1)+\cdots+ T^n\s^{-n}(a_n).$$
	 As an example of a ring of skew polynomials, we have the ring $\mathbb{C}[T;\, \bar \,\,]$ of skew polynomials over $\mathbb{C}$, where the endomorphism $\bar\,:\mathbb{C}\to \mathbb{C}$ is the familiar map sending a complex number to its complex conjugate.
%%%%%%%%%%%%%%%%%%%%%%%%%%%%%%%%%%%%%%%%%%%%%%%%%%%%%%%%%%%%%%%%% 
\begin{subsection}{Preliminaries}\label{(S)preliminaries}
	In this subsection, we review some known facts regarding roots of skew polynomials. The main references for this part are the papers  \cite{lam1985general, LamLeroy1988}. Throughout this subsection, $(K,\s)$ is a fixed $\s$-field. 
	
	An element $a\in K$ is called a \textit{$\s$-conjugate} of $b\in K$ \textit{over a subset} $X$ of $K$ if there exists $0\neq x\in X$ such that $a=\sigma(x)bx^{-1}$.  We drop the phrase "over $X$" when $X=K$. It is easy to see that the notion of $\s$-conjugacy over a subgroup $X$ of the multiplicative group $K^*$ is an equivalence relation in which case the $\s$-conjugacy class containing $a\in K$ is denoted by $C(a,X)$. When $X=K^*$, we write $C(a)$ instead of $C(a,X)$. 		
		
	The maps $N_i:K\to K$, where $i\geq 0$, are defined recursively as follows
	$$N_{i}(a)=\begin{cases}
		1& \text{ if }i=0,\\
		\sigma(N_{i-1}(a))\,a & \text{ if } i>0.\\
	\end{cases}
	$$
	Clearly, $N_i(a)=\s^{i-1}(a)\cdots \s(a)a$ for all $i\geq 1$. It is easy to verify the following identities:
	\begin{equation}\label{(Id)Ni+j}
		N_{i+j}(a)=\sigma^i(N_j(a))N_i(a), \text{ for all } a\in K \text{ and } i,j\geq 0,
	\end{equation}
	\begin{equation}\label{(Id)Niofconjugate}
		N_i(\sigma(a)ba^{-1})=\sigma^i(a)N_i(b)a^{-1}, \text{ for all } a\in K^*, b\in K \text{ and } i\geq 0.
	\end{equation}
	Note that $N_i(ab)=N_i(a)N_i(b)$ for all $a,b\in K$. Furthermore, if $c\in K$ is fixed by $\s$, then $N_i(c)=c^i$ for all $i\geq 0$.
 
	A polynomial $P(T)=\sum_{i=0}^{n}a_iT^i\in K[T;\s]$ can be regarded as a function $P:K\to K$ as follows:
	$$P(a):=\sum_{i=0}^{n}a_iN_i(a).$$
	Note that $P(c)=\sum_{i=0}^{n}a_ic^i$ for all $c$ in the fixed field of $\s$, that is, the field $\{c\in K\,|\, \s(c)=c\}$. The element $P(a)$ is called the value of $P$ at $a$. This definition is justified by the following lemma:
	\begin{lemma}\label{(L)DivisionP(a)}
	For every $P(T)\in K[T;\sigma]$ and $a\in K$, there exists a unique polynomial $Q(T)\in  K[T;\sigma]$  such that 
	$$P(T)=Q(T)(T-a)+P(a).$$			
	\end{lemma}
	A simple consequence of this lemma is the following: 
	$$P(a)=0\iff P(T)\in  K[T;\sigma](T-a).$$
	A polynomial $P(T)\in K[T;\sigma]$ is said to vanish on a subset $S$ of $K$ if $P(a)=0$ for all $a\in S$. In particular, if $P(a)=0$, where $a\in K$, the element $a$ is called a (right skew) root of the polynomial $P(T)\in K[T;\sigma]$. 
	
	The following formula is known as the product formula:	 
	\begin{lemma}\label{(L)ProductPQ(a)}
		For all $P(T), Q(T)\in K[T;\sigma]$, we have
		$$(PQ)(a)=\begin{cases}
			0& \text{ if } Q(a)=0,\\
			P\left( \sigma(Q(a))\, a \, Q(a)^{-1}\right) Q(a)  & \text{ if } Q(a)\neq 0,\\
		\end{cases}
		$$
		where  $(PQ)(T)=P(T)Q(T)$, as elements of $K[T;\sigma]$.  
	\end{lemma}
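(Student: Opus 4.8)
The plan is to split on whether $Q(a)=0$ and, in the nonvanishing case, to reduce to the situation where $P(T)$ is a single monomial by exploiting the additivity of evaluation.

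For the vanishing case, I would invoke the left-ideal characterization noted right after Lemma \ref{(L)DivisionP(a)}: if $Q(a)=0$, then $Q(T)=S(T)(T-a)$ for some $S(T)\in K[T;\sigma]$, whence $P(T)Q(T)=P(T)S(T)(T-a)\in K[T;\sigma](T-a)$, and therefore $(PQ)(a)=0$. This disposes of the first branch immediately.

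For the nonvanishing case, set $b=Q(a)\neq 0$ and $c=\sigma(b)\,a\,b^{-1}$. The first preparatory step is the elementary identity $(TR)(a)=\sigma(R(a))\,a$, valid for every $R(T)\in K[T;\sigma]$: writing $R(T)=\sum_j r_j T^j$ and using $TR(T)=\sum_j \sigma(r_j)T^{j+1}$ together with the recursion $N_{j+1}(a)=\sigma(N_j(a))\,a$, one computes $(TR)(a)=\sum_j \sigma(r_j)N_{j+1}(a)=\sigma\big(\sum_j r_j N_j(a)\big)a=\sigma(R(a))\,a$. Next, I would prove by induction on $m$ that $(T^m Q)(a)=N_m(c)\,b$. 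The base case $m=0$ is just $Q(a)=b=N_0(c)\,b$. For the step, apply the identity just established to $R=T^mQ$ to get $(T^{m+1}Q)(a)=\sigma\big((T^mQ)(a)\big)a=\sigma(N_m(c)\,b)\,a=\sigma(N_m(c))\,\sigma(b)\,a$; since $\sigma(b)\,a=\sigma(b)\,a\,b^{-1}\cdot b=c\,b$, this equals $\sigma(N_m(c))\,c\,b=N_{m+1}(c)\,b$ by the defining recursion for $N_{m+1}$.

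Finally, writing $P(T)=\sum_i a_i T^i$ and using that evaluation is additive and commutes with left multiplication by scalars of $K$, I would conclude $(PQ)(a)=\sum_i a_i (T^iQ)(a)=\sum_i a_i N_i(c)\,b=\big(\sum_i a_i N_i(c)\big)b=P(c)\,b=P\big(\sigma(b)\,a\,b^{-1}\big)\,Q(a)$, as claimed. The only genuine bookkeeping lies in the inductive step, where one must recognize the $\sigma$-conjugate $c=\sigma(b)\,a\,b^{-1}$ and feed the relation $\sigma(b)\,a=c\,b$ back into the recursion $N_{m+1}(c)=\sigma(N_m(c))\,c$; everything else is linearity, so I expect no serious obstacle beyond tracking this conjugation correctly.
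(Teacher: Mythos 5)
Your proof is correct. Note that the paper itself does not prove this lemma --- it is quoted as a known result from Lam and Leroy (\cite{lam1985general, LamLeroy1988}) --- so there is no in-paper argument to compare against; your argument is the standard one and is complete. Both branches check out: the vanishing case follows from the left-ideal characterization $Q(a)=0 \iff Q(T)\in K[T;\sigma](T-a)$, and in the nonvanishing case the identity $(TR)(a)=\sigma(R(a))\,a$, the induction giving $(T^mQ)(a)=N_m(c)\,b$ with $c=\sigma(b)\,a\,b^{-1}$, and the left $K$-linearity of evaluation in the coefficients are all verified correctly.
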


	One consequence of these lemmas is that a skew polynomial of degree $n$ has roots in at most $n$ different $\s$-conjugacy classes. 
	
	A subset $S$ of $K$ is called $\s$-algebraic (or simply algebraic when there is no risk of confusion) if there exists a nonzero $P(T)\in K[T;\s]$  which vanishes on $S$. One can show that every finite   subset  of $K$ is $\s$-algebraic. Moreover, 
	for any $\s$-algebraic subset $S$ of $K$, there exists a unique monic polynomial $P_S(T)\in K[T;\s]$ which vanishes on $S$, and any skew polynomial, vanishing on $S$, belongs to the left ideal $K[T;\s]P_S(T)$. The polynomial $P_S(T)$ is called the minimal (skew) polynomial of $S$ and its  degree, denoted by $rk(S)$, is called the rank of $S$. If $S$ is not $\s$-algebraic, its rank is defined to be $\infty$.  An element $a\in K$ is called P-dependent on a  $\s$-algebraic subset $S$ of $K$ if $P_S(a)=0$.  A subset $S$ of $K$ is called  P-independent  if no $a\in S$ is P-dependent on $S\setminus \{a\}$. 	
	
	Given $a_1,...,a_n\in K$, the $\s$-Vandermonde matrices  $V^\s(a_1,...,a_n)\in K^{n\times n}$ and  $V^\s_{\infty}(a_1,...,a_n)\in K^{\infty\times n}$ are defined as follows
	$$
	V^\s(a_1,...,a_n)=\begin{pmatrix}
		1&1&\cdots&1\\
		N_{1}(a_1)&N_{1}(a_2)&\cdots&N_{1}(a_n)\\
		N_{2}(a_1)&N_{2}(a_2)&\cdots&N_{2}(a_n)\\
		\vdots&\vdots&\cdots&\vdots\\
		N_{n-1}(a_1)&N_{n-1}(a_2)&\cdots&N_{n-1}(a_n)\\
	\end{pmatrix},
	$$
	$$
	V^\s_{\infty}(a_1,...,a_n)=\begin{pmatrix}
		1&1&\cdots&1\\
		N_{1}(a_1)&N_{1}(a_2)&\cdots&N_{1}(a_n)\\
		N_{2}(a_1)&N_{2}(a_2)&\cdots&N_{2}(a_n)\\
		N_{3}(a_1)&N_{3}(a_2)&\cdots&N_{3}(a_n)\\
		\vdots&\vdots&\cdots&\vdots\\
	\end{pmatrix}
	$$
	The proof of the following result can be found in \cite{lam1985general}. 
	\begin{theorem}\label{(T)Vandermonde}
		Let $a_1,...,a_n\in K$ be arbitrary.\\
		(a) The $\s$-Vandermonde matrices $V^\s(a_1,...,a_n)$ and  $V^\s_{\infty}(a_1,...,a_n)$ have equal rank, and their rank is equal to the rank of the subset $\{a_1,...,a_n\}$ of $K$. In particular, $\{a_1,...,a_n\}$ is P-independent iff $V^\s(a_1,...,a_n)$ is invertible. \\
		(b) If $\sum_{i=1}^{n}b_iN_j(a_{i})=0$ for $j=0,1,...,n-1$, then $\sum_{i=1}^{n}b_iN_j(a_{i})=0$ for all $j\geq 0$. 
	\end{theorem}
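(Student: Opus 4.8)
The plan is to read both Vandermonde matrices through the evaluation map, translating every statement about rows and columns into a statement about skew polynomials vanishing on $S=\{a_1,\dots,a_n\}$. Writing $v_j=(N_j(a_1),\dots,N_j(a_n))$ for the $j$-th row, the key observation is that for a coefficient tuple $(c_0,\dots,c_d)$ one has $\sum_{j=0}^{d}c_j v_j=0$ (as a row vector) precisely when the polynomial $\sum_j c_jT^j$ vanishes on all of $S$; dually, a column relation $\sum_i b_iN_j(a_i)=0$ holding simultaneously for a range of $j$ records that the column vector $b=(b_1,\dots,b_n)$ is annihilated by the corresponding rows. The single engine driving the whole proof is the product formula (Lemma \ref{(L)ProductPQ(a)}): it guarantees that if $P_S$ vanishes on $S$ then so does every left multiple $Q\,P_S$, and in particular every $T^kP_S$.

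First I would establish (b) together with the equality $\mathrm{rank}\,V^\s=\mathrm{rank}\,V^\s_\infty$. Let $P_S(T)=\sum_{j=0}^{r}c_jT^j$ be the monic minimal polynomial of $S$, so $c_r=1$ and $r=rk(S)$; here $r\le n$ because the $n$ homogeneous conditions $\sum_{j=0}^{n}c_jN_j(a_i)=0$ in the $n+1$ unknowns $c_j$ always admit a nonzero solution. Since $T^kP_S$ vanishes on $S$ and $T^kc=\s^k(c)T^k$, evaluating at each $a_i$ yields the recursion $v_{r+k}=-\sum_{j=0}^{r-1}\s^k(c_j)\,v_{j+k}$ for every $k\ge 0$. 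A strong induction on $m$ then shows $v_m\in\mathrm{span}_K(v_0,\dots,v_{r-1})$ for all $m\ge 0$, since the indices $j+k$ on the right all lie strictly below $m=r+k$. Because $r\le n$, every row of $V^\s_\infty$ lies in the span of the first $n$ rows, which is exactly assertion (b); and the common row space of $V^\s$ and $V^\s_\infty$ equals $\mathrm{span}_K(v_0,\dots,v_{r-1})$, so the two matrices share the same rank.

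Next I would pin this common rank to $rk(S)$ and settle the invertibility criterion. The rows $v_0,\dots,v_{r-1}$ are $K$-linearly independent, for a nontrivial relation among them would produce a nonzero polynomial of degree $<r$ vanishing on $S$, contradicting the minimality of $P_S$; hence the rank equals $r=rk(S)$. For the last clause, $V^\s$ is invertible iff $rk(S)=n$, so it remains to see that $S$ is P-independent iff $rk(S)=n$. Here I would use a rank-increment lemma: if $a$ is not P-dependent on a finite set $F$, then $rk(F\cup\{a\})=rk(F)+1$, whereas if $a$ is P-dependent on $F$ the rank is unchanged. The increment's upper bound comes from the product formula, which lets one choose $\gamma=\s(P_F(a))\,a\,P_F(a)^{-1}$ so that $(T-\gamma)P_F$ kills $a$ while still vanishing on $F$; the lower bound is forced because a degree-$rk(F)$ minimal polynomial for $F\cup\{a\}$ would have to coincide with the monic $P_F$, which does not vanish at $a$. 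Filtering $S$ as $\emptyset\subset\{a_1\}\subset\cdots\subset S$, and using that P-dependence on a subset entails P-dependence on every larger set (again via the product formula, since the larger minimal polynomial is a left multiple of the smaller), then gives $rk(S)=n$ exactly when $S$ is P-independent.

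The main obstacle I anticipate is bookkeeping the twist by $\s$ rather than any deep difficulty: the recursion carries the coefficients $\s^k(c_j)$, so one must apply the product formula and the identity $T^kc=\s^k(c)T^k$ with care and confirm that the indices $j+k$ stay in the inductively controlled range. The only genuinely constructive step is the rank-increment lemma, where extracting the right scalar $\gamma$ from the product formula is what makes the filtration induction go through; everything else reduces to ordinary linear algebra over the commutative field $K$ once the polynomial-to-matrix dictionary is in place.
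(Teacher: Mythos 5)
The paper does not actually prove this theorem --- it defers to \cite{lam1985general} --- so there is no internal proof to compare against; what you have written is a correct, self-contained reconstruction along the standard lines of that reference. Your two main devices both check out: the recursion $v_{r+k}=-\sum_{j=0}^{r-1}\s^k(c_j)v_{j+k}$ obtained by evaluating $T^kP_S$ (legitimate because the vanishing set of polynomials on $S$ is a left ideal, by the product formula) correctly pins the common row space of $V^\s$ and $V^\s_\infty$ to $\mathrm{span}_K(v_0,\dots,v_{r-1})$ and yields (b); and the rank-increment lemma, with $\gamma=\s(P_F(a))\,a\,P_F(a)^{-1}$ supplying the upper bound and left-divisibility by $P_F$ the lower bound, is exactly right. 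The only step worth stating more carefully is the final equivalence: your filtration in the fixed order $a_1,\dots,a_n$ immediately gives ``P-independent $\Rightarrow rk(S)=n$'', while the converse needs the observation that if some $a_i$ is P-dependent on $S\setminus\{a_i\}$ one should re-filter with $a_i$ adjoined last (harmless, since $rk(S)$ does not depend on the ordering) to conclude $rk(S)=rk(S\setminus\{a_i\})\le n-1$. With that sentence added the argument is complete.
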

	We remark that the conclusion of Part (b) holds for any skew polynomial, that is,   $\sum_{i=1}^{n}b_iP(a_{i})=0$ for all $P(T)\in K[T;\s]$. 
	
	Theorem \ref{(T)Vandermonde} can be used  to analyze the concept of P-dependence in terms of linear dependence:  Consider the $K$-vector space $K^\infty$ of "infinite" row vectors over $K$, that is,
	$$K^\infty=\{(a_0,a_1,a_2,...)\,|\, a_0,a_1,...\in K\}.$$
	Let $N:K^*\to K^\infty$ be the map
	$$N(a)=(N_0(a),N_1(a),N_2(a),...).$$
	Using this map, we can reformulate Theorem \ref{(T)Vandermonde}:
	\begin{proposition} \label{(P)reformulationofVandermonde}
		1) A set $S\subset  K^*$ is $\s$-algebraic over $K$ iff the vector subspace 
		$N_S$ of $K^\infty$ which is generated by all $N(a)$, where $a\in S$, is finite-dimensional. Moreover, we have
		$rk(S)=\dim N_S$. \\
		2) Elements $a_1,...,a_n\in K^*$ are P-independent iff the vectors $N(a_1),...,N(a_n)$ are linearly independent over $K$. In particular, an element $a\in K^*$ is P-dependent on a subset $S$ of $K^*$ iff the row vector $N(a)$ belongs to the vector space $N_S$. 
	\end{proposition}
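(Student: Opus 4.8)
The plan is to translate the two statements of Proposition~\ref{(P)reformulationofVandermonde} directly into the language of ranks established in Theorem~\ref{(T)Vandermonde}, since the map $N$ simply records the columns (or rather the rows, transposed) of the infinite $\s$-Vandermonde matrices. The key observation is that for a finite subset $\{a_1,\dots,a_n\}\subset K^*$, the vectors $N(a_1),\dots,N(a_n)\in K^\infty$ are precisely the columns of $V^\s_\infty(a_1,\dots,a_n)$, read vertically. Thus the dimension of their span equals the column rank of $V^\s_\infty(a_1,\dots,a_n)$, which by Theorem~\ref{(T)Vandermonde}(a) equals $rk(\{a_1,\dots,a_n\})$. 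This immediately handles the finite case of both parts.

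For Part 1), I would argue as follows. First suppose $S$ is $\s$-algebraic with $rk(S)=m<\infty$, so there is a monic minimal polynomial $P_S(T)$ of degree $m$ vanishing on $S$. Writing $P_S(T)=\sum_{i=0}^{m} c_i T^i$ with $c_m=1$, the condition $P_S(a)=0$ for all $a\in S$ reads $\sum_{i=0}^{m} c_i N_i(a)=0$, which is a fixed nontrivial linear relation among the coordinates of every $N(a)$. Applying the remark following Theorem~\ref{(T)Vandermonde} (the extension of Part (b) to arbitrary skew polynomials), one gets that this same relation, together with its ``shifts,'' forces every vector $N(a)$ to lie in a fixed finite-dimensional coordinate-closed subspace, so $N_S$ is finite-dimensional with $\dim N_S\le m$. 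Conversely, if $N_S$ is finite-dimensional of dimension $d$, I would choose $a_1,\dots,a_d\in S$ with $N(a_1),\dots,N(a_d)$ a basis; then every $N(a)$, $a\in S$, is a linear combination of these, and by the finite case this forces $rk(\{a_1,\dots,a_d,a\})=d$ for every $a$, hence a single polynomial of degree $d$ (the minimal polynomial of $\{a_1,\dots,a_d\}$) vanishes on all of $S$. Combining the two inequalities gives $rk(S)=\dim N_S$.

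Part 2) then falls out quickly. For finitely many elements $a_1,\dots,a_n\in K^*$, P-independence means $rk(\{a_1,\dots,a_n\})=n$, which by Theorem~\ref{(T)Vandermonde}(a) is equivalent to invertibility of $V^\s(a_1,\dots,a_n)$, equivalently linear independence of its columns, equivalently linear independence of $N(a_1),\dots,N(a_n)$. For the ``in particular'' clause, $a$ is P-dependent on $S$ iff $P_S(a)=0$; unwinding the definition of $P_S$ and using the characterization of the span $N_S$ from Part 1) (namely that $N_S$ is cut out by the linear relations coming from $P_S$), this is equivalent to $N(a)\in N_S$. The one point requiring care—and the step I expect to be the main obstacle—is ensuring that a \emph{single} finite-degree relation genuinely propagates to \emph{all} coordinates of the infinite vectors, so that finite-dimensionality of the column span of $V^\s$ (the truncated matrix) is equivalent to that of $V^\s_\infty$; this is exactly what Part (a) of Theorem~\ref{(T)Vandermonde} guarantees (the two matrices have equal rank), so the argument reduces to invoking that equality rather than reproving the propagation by hand.
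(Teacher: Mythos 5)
Your proposal is correct and takes essentially the same route as the paper, which offers no separate proof and presents the proposition as a direct reformulation of Theorem~\ref{(T)Vandermonde}: identify $N(a_1),\dots,N(a_n)$ with the columns of $V^\s_\infty(a_1,\dots,a_n)$, use the equality of ranks of the truncated and infinite Vandermonde matrices, and reduce the infinite case to finite subsets. The only cosmetic quibble is that the propagation of the relation $P_S(a)=0$ to all coordinates of $N(a)$ is most cleanly obtained from the recursion $N_{i+1}(a)=a\s(N_i(a))$ (or from the product formula applied to $T^jP_S(T)$) rather than from the remark after Theorem~\ref{(T)Vandermonde}(b), but the fact you invoke is true and elementary, so this does not affect correctness.
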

	The vector space $K^\infty$ can be considered as a (commutative) $K$-algebra using the multiplication
	$$(a_0,a_1,a_2,...)(b_0,b_1,b_2,...)=(a_0b_0,a_1b_1,a_2b_2,...).$$
	Since each $N_i:K\to K$ satisfies the identity $$N_i(ab)=N_i(a)N_i(b), \text{ where } a,b\in K,$$ we see that $N_{K^*}$ is a subalgebra of the $K$-algebra $K^\infty$.  More generally, $N_{S}$ is a subalgebra of the $K$-algebra $K^\infty$ if $S\subset K^*$ is a multiplicative subset of $K$, that is, $1\in S$, and $ab\in S$ when $a,b\in S$.  
	
	\end{subsection}
%%%%%%%%%%%%%%%%%%%%%%%%%%%%%%%%%%%%%%%%%%%%%%%%%%%%%%%%%%%%%%%%%   
%%%%%%%%%%%%%%%%%%%%%%%%%%%%%%%%%%%%%%%%%%%%%%%%%%%%%%%%%%%%%%%%% 
%%%%%%%%%%%%%%%%%%%%%%%%%%%%%%%%%%%%%%%%%%%%%%%%%%%%%%%%%%%%%%%%% 
%%%%%%%%%%%%%%%%%%%%%%%%%%%%%%%%%%%%%%%%%%%%%%%%%%%%%%%%%%%%%%%%% 
\begin{subsection}{Left and right roots}	
	In this part, we assume that $\s:K\to K$ is a  field automorphism. As mentioned at the beginning of this section, elements of $K[T;\s]$ can also be considered as right polynomials. As a consequence, one has the "left" analogues of the concepts and results of the preceding subsection.  More precisely, any polynomial $P(T)=\sum_{i=0}^{n}a_iT^i\in K[T;\s]$ gives rise to a left evaluation map $P_l:K\to K$ defined by
	$$P_l(a)=\sum_{i=0}^{n}\s^{-i}(a_i)N_{-i}(a),$$
	where $N_{-i}(a)=\s^{-i+1}(a)\cdots \s^{-1}(a)a$ for $a\in K$ and $i\geq 1$. 
	Moreover, the "left" version of Lemma \ref{(L)ZerosofL} holds. More precisely,  for every polynomial $P(T)\in K[T;\s]$
	and $a\in K$, there exists a unique polynomial $Q(T)\in K[T;\s]$ such that
	$$P(T)=(T-a)Q(T)+P_l(a).$$
	In particular, $P(T)\in (T-a)K[T;\s]$ iff $P_l(a)=0$ in which case $a$ is called a left (skew) root of $P(T)$. We leave it to the reader to define and prove the left analogues of the concepts and results of the preceding subsection.  We shall use the  prefixes "left" and "right" to distinguish between the concepts defined using $P_l$  and the ones presented in the preceding subsection. However, we usually drop the prefix "right" when there is no risk for confusion. The following proposition gives a relationship between the concepts of left P-dependence and right P-dependence.  

	\begin{proposition}\label{(P)FormulationofVandermonde}
		For every $a_1,...,a_n\in K^*$, the following are equivalent:\\
		(1) The right rank of the set $\{a_1,...,a_n\}$  is less than $n$, or equivalently, the elements $a_1,...,a_n$ are right P-dependent.\\
		(2) There exist $b_1,...,b_n\in K$, not all equal to zero, such that  $$b_1(1-a_1T)^{-1}+\cdots+b_n(1-a_nT)^{-1}=0.$$ 
		(3) There exist $b_1,...,b_n\in K$, not all equal to zero, such that  $$b_1(1-a_1T)^{-1}+\cdots+b_n(1-a_nT)^{-1}\in K[T;\s].$$ 
		(4) 	The left rank of the set $\{\s^{-1}(a_1^{-1}),...,\s^{-1}(a_n^{-1})\}$ is less than $n$, or equivalently, the elements $\s^{-1}(a_1^{-1}),...,\s^{-1}(a_n^{-1})$ are left P-dependent. 	
		
	\end{proposition}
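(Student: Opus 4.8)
The plan is to view the single element $f:=\sum_{i=1}^{n}b_i(1-a_iT)^{-1}$ of the skew field $K(T;\s)$ through its two Laurent expansions and to read each of the four conditions off one of them. Since $\s$ is an automorphism, $K(T;\s)$ embeds both into $K((T;\s))$, by expanding in ascending powers of $T$, and into the Laurent field $K((T^{-1};\s^{-1}))$ obtained by taking $S=T^{-1}$ as variable (so that $Sa=\s^{-1}(a)S$), by expanding in descending powers of $T$. A geometric series computation, checked by multiplying out with the commutation rule and the definition of the maps $N_{\pm l}$, gives
\[(1-aT)^{-1}=\sum_{j\geq 0}N_j(a)\,T^{j}\qquad\text{in }K[[T;\s]],\]
\[(1-aT)^{-1}=-\sum_{l\geq 1}N_{-l}\big(\s^{-1}(a^{-1})\big)\,T^{-l}\qquad\text{in }K((T^{-1};\s^{-1})),\]
the second using $T^{-1}a^{-1}=\s^{-1}(a^{-1})T^{-1}$. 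Writing $a_i'=\s^{-1}(a_i^{-1})$, the ascending expansion of $f$ is $\sum_{j\geq 0}\big(\sum_i b_iN_j(a_i)\big)T^{j}$ and its descending expansion is $-\sum_{l\geq 1}\big(\sum_i b_iN_{-l}(a_i')\big)T^{-l}$.

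With both expansions available, the equivalences become bookkeeping on coefficients, using that each embedding is injective. For (1) $\Leftrightarrow$ (2): by Theorem \ref{(T)Vandermonde} and Proposition \ref{(P)reformulationofVandermonde}, the set $\{a_1,\dots,a_n\}$ is right P-dependent exactly when some nonzero $(b_1,\dots,b_n)$ satisfies $\sum_i b_iN_j(a_i)=0$ for $j=0,\dots,n-1$, and part (b) of the theorem promotes this to all $j\geq 0$; this is precisely the vanishing of every coefficient of the ascending expansion, i.e. $f=0$. For (2) $\Leftrightarrow$ (4) I would argue symmetrically with the descending expansion: $f=0$ is equivalent to $\sum_i b_iN_{-l}(a_i')=0$ for all $l\geq 1$, and the case $l=0$, namely $\sum_i b_i=0$, is already forced by the ascending expansion; by the left analogue of Proposition \ref{(P)reformulationofVandermonde} these relations say exactly that the vectors $(N_0(a_i'),N_{-1}(a_i'),\dots)$ are linearly dependent, i.e. that $\{a_1',\dots,a_n'\}$ is left P-dependent. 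Note the \emph{same} coefficient vector $(b_i)$ serves throughout.

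The only implication that is not immediate is (3) $\Rightarrow$ (2); the reverse is trivial since $0$ is a polynomial. Here I would exploit that the descending expansion of $(1-a_iT)^{-1}$, and hence of $f$, contains only \emph{strictly negative} powers of $T$. If $f\in K[T;\s]$, then its image under $K(T;\s)\hookrightarrow K((T^{-1};\s^{-1}))$ is the polynomial $f$ itself, which has only nonnegative powers of $T$; comparing with the computed descending expansion, uniqueness of Laurent expansions forces both to vanish, so $f=0$. This is the step I expect to be the crux, as it is where the second completion is genuinely needed: inside $K((T;\s))$ alone one only learns that the ascending coefficients $\sum_i b_iN_j(a_i)$ vanish for large $j$, which is not visibly sufficient, whereas the descending expansion converts the hypothesis ``$f$ is a polynomial'' directly into ``$f=0$''. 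Combining (1) $\Leftrightarrow$ (2) $\Leftrightarrow$ (3) with (2) $\Leftrightarrow$ (4) gives the proposition.
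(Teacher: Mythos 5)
Your proof is correct, but it takes a genuinely different route from the paper for the two nontrivial implications. The paper also gets $(1)\Leftrightarrow(2)$ from the expansion $(1-aT)^{-1}=\sum_{j\geq 0}N_j(a)T^j$ and Theorem \ref{(T)Vandermonde}, and notes $(2)\Rightarrow(3)$ is trivial; but it then proves $(3)\Rightarrow(4)$ by a divisibility argument inside $K(T;\s)$ --- right-multiplying the identity by an auxiliary polynomial $Q(T)$ having $\s^{-1}(a_1^{-1}),\dots,\s^{-1}(a_{n-1}^{-1})$ as left roots and using that $(\s^{-1}(a_n^{-1})-T)^{-1}Q(T)\in K[T;\s]$ forces $Q_l(\s^{-1}(a_n^{-1}))=0$ --- and proves $(4)\Rightarrow(2)$ by a degree count against the minimal left polynomial of $\{\s^{-1}(a_1^{-1}),\dots,\s^{-1}(a_{n-1}^{-1})\}$. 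You instead replace both steps by the second Laurent expansion at infinity, $(1-aT)^{-1}=-\sum_{l\geq 1}N_{-l}\bigl(\s^{-1}(a^{-1})\bigr)T^{-l}$ in $K((T^{-1};\s^{-1}))$, which I checked is correct (the key identities being $a\,\s(\s^{-1}(a^{-1}))=1$ and $N_{-l}(a')\s^{-l}(a')=N_{-(l+1)}(a')$, the latter using commutativity of $K$); this makes $(3)\Rightarrow(2)$ immediate by uniqueness of Laurent coefficients, since the expansion at infinity of $f$ has only strictly negative powers of $T$ while a polynomial has only nonnegative ones, and it makes $(2)\Leftrightarrow(4)$ a coefficient comparison via the left analogue of Proposition \ref{(P)reformulationofVandermonde} (you correctly supply the missing $l=0$ relation $\sum_i b_i=0$ from the expansion at the origin). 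Your version is more uniform and avoids the auxiliary-polynomial trick, but it requires setting up and justifying the embedding $K(T;\s)\hookrightarrow K((T^{-1};\s^{-1}))$ and the explicit expansion at infinity, neither of which the paper introduces; the paper's argument stays entirely within the completions and tools it has already established.
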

	\begin{proof}
		The equivalence of (1) and (2) follows from Theorem \ref{(T)Vandermonde} and the fact that $(1-aT)^{-1}$, as an element of $K[[T;\s]]$, is the series
		$$(1-aT)^{-1}=\sum_{i=0}^\infty N_i(a)T^i.$$
		Clearly, (2) implies (3). To prove (3)$\implies$(4), let there be $b_1,...,b_n\in K$, not all equal to zero, such that
		$$b_1(1-a_1T)^{-1}+\cdots+b_n(1-a_nT)^{-1}=P(T)\in K[T;\s].$$
		Without loss of generality, we may assume that $b_n\neq 0$. This equality can be rewritten as follows
			$$b_1\s^{-1}(a_1^{-1})(\s^{-1}(a_1^{-1})-T)^{-1}+\cdots+b_n\s^{-1}(a_n^{-1})(\s^{-1}(a_n^{-1})-T)^{-1}=P(T).$$
		Let $Q(T)\in K[T;\s]$ have $\s^{-1}(a_1^{-1}),...,\s^{-1}(a_n^{-1})$ as left zeros, that is, $$Q_l(\s^{-1}(a_i^{-1}))=0,$$ for $1\leq i<n$. It follows that 
	\begin{equation*}
		\begin{aligned}
			b_n\s^{-1}(a_n^{-1})(\s^{-1}(a_n^{-1})-T)^{-1}Q(T) ={} & P(T)Q(T)\\
			+& \sum_{i=1}^{n-1}b_i\s^{-1}(a_i^{-1})(T-\s^{-1}(a_i^{-1}))^{-1}Q(T)\\
		\end{aligned}
	\end{equation*}
	belongs to $K[T;\s]$. Therefore, we see that
		 $Q_l(\s^{-1}(a_n^{-1}))=0$. Therefore, we conclude that $\s^{-1}(a_n^{-1})$ is left P-dependent on the set $\{\s^{-1}(a_1^{-1}),...,\s^{-1}(a_{n-1}^{-1})\}$. In particular, the left rank of the set $\{\s^{-1}(a_1^{-1}),...,\s^{-1}(a_n^{-1})\}$ is less than $n$. This completes the proof of  (3)$\implies$(4). It remains to prove (4)$\implies$(2). Let $\s^{-1}(a_1^{-1}),...,\s^{-1}(a_n^{-1})$ be left P-dependent. 	Without loss of generality, we may assume that $\s^{-1}(a_n^{-1})$ is P-dependent on the set $\{\s^{-1}(a_1^{-1}),...,\s^{-1}(a_{n-1}^{-1})\}$. 	Let $Q(T)\in K[T;\s]$ be the minimal left polynomial of $\{\s^{-1}(a_1^{-1}),...,\s^{-1}(a_{n-1}^{-1})\}$. Note that the degree of $Q(T)$ is less than $n$. This fact, together with the fact that 
		 $$(\s^{-1}(a_n^{-1})-T)^{-1}Q(T)\in K[T;\s],$$
		 for all $1\leq i\leq n$, imply that there are  $b_1,...,b_n\in K$, not all equal to zero, such that  $$b_1(\s^{-1}(a_1^{-1})-T)^{-1}Q(T)+\cdots+b_n(\s^{-1}(a_n^{-1})-T)^{-1}Q(T)=0.$$
		 Since $K(T;\s)$ is a skew field, it follows that 
		 $$b_1(\s^{-1}(a_1^{-1})-T)^{-1}+\cdots+b_n(\s^{-1}(a_n^{-1})-T)^{-1}=0,$$
		 or equivalently, 
		 $$b_1\s^{-1}(a_1)(1-a_1T)^{-1}+\cdots+b_n\s^{-1}(a_n)(1-a_nT)^{-1}=0,$$
		 and we are done. 
	\end{proof}
	 As a consequence of this proposition, we have
	 \begin{corollary}
	 	A set $S\subset K^*$ is right $\s$-algebraic iff the set 
	 	$$\widehat{S}=\{\s^{-1}(a^{-1})\, |\, a\in S\},$$
	 	is left $\s$-algebraic. Furthermore, the right rank of an arbitrary subset $S$ of $K^*$  is equal to the left rank of $\widehat{S}$. 
	 \end{corollary}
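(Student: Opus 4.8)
The plan is to deduce everything from Proposition \ref{(P)FormulationofVandermonde}, which already contains the crux of the matter for $n$-element sets, and then to package it into a statement about arbitrary subsets via the bijection $\phi\colon K^*\to K^*$ given by $\phi(a)=\s^{-1}(a^{-1})$, so that $\widehat S=\phi(S)$. The first move is to record that Proposition \ref{(P)FormulationofVandermonde} is really a statement about P-independence once one negates it: for any finite $\{a_1,\dots,a_k\}\subset K^*$, the equivalence of (1) and (4) says that the right rank is less than $k$ if and only if the left rank of $\{\phi(a_1),\dots,\phi(a_k)\}$ is less than $k$. Since a $k$-element set has rank at most $k$, taking contrapositives shows that $\{a_1,\dots,a_k\}$ is right P-independent if and only if $\{\phi(a_1),\dots,\phi(a_k)\}$ is left P-independent. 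Injectivity of $\phi$ guarantees the image again consists of $k$ distinct elements, so there is no loss in cardinality.

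Next I would pass from finite sets to an arbitrary $S$ by expressing the rank as a supremum. By Proposition \ref{(P)reformulationofVandermonde}, the right rank of $S$ equals $\dim N_S$, and right P-independence of elements is the same as linear independence of the associated vectors $N(a)$; the standard fact that the dimension of a span is the maximal size of a linearly independent subfamily then yields
$$rk(S)=\sup\{\,|F| : F\subseteq S \text{ finite and right P-independent}\,\},$$
with the convention that the supremum is $\infty$ when no finite bound exists, and the identical characterization holds on the left for $\widehat S$.

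Finally I would combine the two steps. Because $\phi$ is a bijection that, by the first step, carries finite right P-independent subsets of $S$ exactly onto finite left P-independent subsets of $\widehat S$ while preserving cardinalities, the two suprema above coincide, and hence the right rank of $S$ equals the left rank of $\widehat S$. The algebraicity statement is then immediate: $S$ is right $\s$-algebraic precisely when its right rank is finite, which by the rank equality is precisely when the left rank of $\widehat S$ is finite, that is, when $\widehat S$ is left $\s$-algebraic.

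I expect the only delicate point to be the bookkeeping in the first step, namely correctly reading Proposition \ref{(P)FormulationofVandermonde} as an assertion about independence of $k$-element sets and checking that distinctness is preserved under $\phi$. The genuine content already lives in that proposition, so the remainder is essentially the organizational task of reducing the arbitrary-subset claim to its finite subsets via the supremum characterization of rank.
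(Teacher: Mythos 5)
Your proposal is correct and follows exactly the route the paper intends: the paper states this corollary as an immediate consequence of Proposition \ref{(P)FormulationofVandermonde} (equivalence of (1) and (4)) without writing out the reduction, and your argument—passing to P-independence of finite subsets via the contrapositive and then to arbitrary sets via the supremum characterization of rank from Proposition \ref{(P)reformulationofVandermonde}—is precisely the omitted bookkeeping. No gaps.
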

	In the case when $\s$ is an involution, that is, $\s^2=1_K$, we have
	$$\widehat{S}=\{\s(a^{-1})\, |\, a\in S\}.$$
	As an example, a set $S\subset \mathbb{C}^*$ is right $\s$-algebraic iff the set 
	$$\widehat{S}=\left\lbrace \frac{1}{\bar{c}}\, |\, c\in S\right\rbrace ,$$
	is left $\s$-algebraic. Here, $\s:\mathbb{C}\to \mathbb{C}$ is the complex conjugation map. 

	Although Proposition \ref{(P)FormulationofVandermonde} gives a relationship between left and right ranks, it does not provide any relationship between minimal left and right  polynomials.  In order to derive such a relationship, we need the following result which deals with anti-automorphisms of rings of skew polynomials. Recall that a map $\alpha:R\to R$, where $R$ is a ring,  is called an anti-automorphism, if it is an additive group isomorphism, $\alpha(1)=1$, and $\alpha(ab)=\alpha(b)\alpha(a)$ for all $a,b\in R$. In the following proposition, we give a characterization of anti-automorphisms of $K[T;\s]$. For the case of automorphisms of skew polynomial rings over (not necessarily commutative) rings, see \cite{chuang2013automorphisms}.  
	\begin{proposition}\label{(P)Anti-automorphism}
		Suppose that $\s:K\to K$ is an automorphism not equal to the identity map $1_K$. Let $\alpha:K[T;\s]\to K[T;\s]$ be a map. If $\alpha$ is an anti-automorphism, then the following hold:\\
		(1) $\alpha(K)= K$, and the restriction $\alpha_0$ of $\alpha$ to $K$ is an automorphism of $K$. \\
		(2) There exists an element $a_0\in K^*$ such that $\alpha(T)=a_0 T$.\\
		(3) $\s\alpha_0\s=\alpha_0$. \\
		Conversely, if $\alpha_0:K\to K$ is an automorphism of $K$ which satisfies Property (3), then, for every $a_0\in K^*$, there exists a unique anti-automorphism \newline $\alpha:K[T;\s]\to K[T;\s]$, with $\alpha(T)=a_0T$, whose restriction to $K$ is $\alpha_0$.  
		
	\end{proposition}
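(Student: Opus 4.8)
The plan is to prove the two implications separately, extracting Property (3) as a byproduct of the forward direction. Throughout I would use that $K[T;\s]$ is a domain in which degree is additive, so its group of units is exactly $K^*$. For the forward direction I would first settle (1). Since $\alpha$ is a bijective anti-automorphism it carries units to units, and as the units of $K[T;\s]$ are precisely $K^*$, we get $\alpha(K^*)=K^*$ and hence $\alpha(K)=K$. The restriction $\alpha_0=\alpha|_K$ is then a bijection of $K$; it is additive, fixes $1$, and because $K$ is commutative the anti-multiplicativity $\alpha(ab)=\alpha(ba)=\alpha(a)\alpha(b)$ collapses to ordinary multiplicativity, so $\alpha_0$ is a field automorphism.

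Next I would pin down $\alpha(T)$. Since powers of $T$ commute, anti-multiplicativity gives $\alpha(T^i)=\alpha(T)^i$ with no reordering, so every value $\alpha(\sum a_iT^i)=\sum \alpha(T)^i\alpha_0(a_i)$ is a $K$-combination of the powers $\alpha(T)^i$. If $\deg\alpha(T)=m\ge 2$, these powers occupy only degrees $0,m,2m,\dots$, so no element of degree $1$—in particular $T$ itself—could lie in the image, contradicting surjectivity; hence $m=1$ and $\alpha(T)=c+bT$ with $b\neq 0$. To locate $c$ and to obtain (3) I would apply $\alpha$ to the defining relation $Ta=\s(a)T$. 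Anti-multiplicativity turns this into $\alpha_0(a)(c+bT)=(c+bT)\alpha_0(\s(a))$; expanding the right-hand side via $Td=\s(d)T$ and comparing coefficients of $T$ yields $\alpha_0(a)=\s\alpha_0\s(a)$ for all $a$, which is exactly (3), while the constant terms give $\alpha_0(a)\,c=c\,\alpha_0(\s(a))$. If $c\neq 0$, cancelling $c$ in the field $K$ would force $\alpha_0=\alpha_0\s$, hence $\s=1_K$, against the hypothesis; therefore $c=0$ and $\alpha(T)=bT$, proving (2) with $a_0=b$. This is the single point where $\s\neq 1_K$ is essential.

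For the converse, uniqueness is immediate: an anti-automorphism is determined on the generators $K$ and $T$, and the forced formula $\alpha(\sum a_iT^i)=\sum (a_0T)^i\alpha_0(a_i)$ fixes its value everywhere. For existence I would invoke the universal property of the skew polynomial ring (equivalently, build a ring homomorphism into the opposite ring): an anti-homomorphism extending $\alpha_0$ and sending $T\mapsto a_0T$ exists precisely when the images respect $Ta=\s(a)T$, i.e. when $\alpha_0(a)(a_0T)=(a_0T)\alpha_0(\s(a))$ in $K[T;\s]$. Expanding the right-hand side gives $a_0\,(\s\alpha_0\s)(a)\,T$, which by (3) equals $a_0\alpha_0(a)T=\alpha_0(a)a_0T$ by commutativity; so the relation holds and $\alpha$ is a well-defined anti-homomorphism. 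Equivalently, one checks directly that the above formula is additive, sends $1$ to $1$, and is anti-multiplicative, the only nontrivial consistency being this same relation.

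It remains to see $\alpha$ is bijective. Using $(a_0T)^n=N_n(a_0)T^n$ with $N_n(a_0)\neq 0$, one sees that $\alpha$ preserves degree and multiplies leading coefficients by units, so $\alpha$ is injective; surjectivity follows by induction on degree, solving at each stage for the single leading coefficient (all of $\alpha_0$, $\s$, and multiplication by $N_n(a_0)$ being bijective) and then correcting the lower-degree remainder by the inductive hypothesis. Hence $\alpha$ is the desired anti-automorphism. I expect the main obstacle to be the verification that consumes Property (3)—the compatibility relation $\alpha_0(a)(a_0T)=(a_0T)\alpha_0(\s(a))$ in the converse, mirrored by the coefficient-matching in the forward direction that simultaneously delivers (3) and forces $c=0$; the degree bookkeeping and the bijectivity argument are comparatively routine once this core computation is in place.
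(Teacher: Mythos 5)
Your argument is correct and follows essentially the same route as the paper: the units of $K[T;\s]$ give $\alpha(K)=K$, a degree argument pins $\alpha(T)$ down to degree one, and applying $\alpha$ to $Ta=\s(a)T$ and comparing coefficients simultaneously yields $\s\alpha_0\s=\alpha_0$ and kills the constant term of $\alpha(T)$ using $\s\neq 1_K$. The only cosmetic difference is in the converse, where you package the verification of anti-multiplicativity via the universal property of $K[T;\s]$ mapped into the opposite ring, while the paper expands products of monomials directly; both reduce to the same compatibility relation, and your explicit bijectivity check by degree is if anything slightly more complete than the paper's.
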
 
	\begin{proof}
		Let $\alpha:K[T;\alpha]\to K[T;\alpha]$ be an anti-automorphism. First, we prove (1). Since the set of units of $K[T;\s]$ is $K^*$, we see that $\alpha(K)\subset K$. Similarly, we have $\alpha^{-1}(K)\subset K$ since $\alpha^{-1}$ is an anti-automorphism of $K[T;\alpha]$. It follows that $\alpha(K)=K$. Clearly, $\alpha_0:K\to K$ is an automorphism of $K$ since $K$ is commutative. This completes the proof of (1).  A simple argument using the degree function reveals that $\alpha(T)=a_0T+b_0$ for some $a_0,b_0\in K$ with $a_0\neq 0$.
		For $a\in K$, we have
		$$Ta=\s(a)T\implies \alpha(Ta)=\alpha (\s(a)T) \implies
		\alpha_0(a)\alpha(T)=\alpha (T)\alpha_0 (\s(a)),$$
		which in turn implies that 
		$$ \alpha_0(a)a_0T+\alpha_0(a)b_0=a_0T\alpha_0 (\s(a))+b_0\alpha_0(\s(a))=a_0\s(\alpha_0 (\s(a)))T+b_0\alpha_0(\s(a)).$$
		It follows that
		$$\forall a\in K, \, \alpha_0(a)a_0=a_0\s(\alpha_0 (\s(a))) \text{ and } \alpha_0(a)b_0= b_0\alpha_0 (\s(a)).$$
		The first relation proves (3)  because $a_0\neq 0$. If $b_0\neq 0$, then $\alpha_0$ must be equal to $\alpha_0\s$ which implies that $\s=1_K$ since $\alpha$ is a bijection. This contradicts the assumption that $\s\neq 1_K$. This completes the proof of (2). 
		
		To prove the second statement, let $\alpha_0:K\to K$ be an automorphism satisfying (1) and (3). Given $a_0\in K^*$, we define $\alpha:K[T;\alpha]\to K[T;\alpha]$, as follows
		$$\alpha(\sum_{i=0}^nb_iT^i)=\sum_{i=0}^n\s^i(\alpha_0(b_i))N_i(a_0)T^i.$$
		It is easy to see that $\alpha$ is an additive group isomorphism. Since $\alpha(1)=1$, $\alpha(T)=a_0T$, and the restriction of $\alpha$ to $K$ is $\alpha_0$, we only need to prove that 
		$$\alpha(P(T)Q(T))=\alpha(Q(T))\alpha(P(T)),$$
		for all $P(T),Q(T)\in K[T;\s]$. By the additivity of $\alpha$, it is enough to show that
		$$\alpha \left( (aT^m)(bT^n)\right) =\alpha \left( bT^n\right) \alpha \left( aT^m\right),$$
		for all $a,b\in K$ and $m,n\geq 0$. We have
		$$\alpha \left( (aT^m)(bT^n)\right)=\alpha \left( a\s^m(b)T^{m+n}\right) =\s^{m+n}\left( \alpha_0\left(a\s^m(b) \right) \right)N_{m+n}(a_0) T^{m+n},$$
		and
		\begin{align*}
			\alpha \left( bT^n\right) \alpha \left( aT^m\right) = & \, \s^{n}\left( \alpha_0\left(b \right) \right)N_n(a_0) T^{n}\s^{m}\left( \alpha_0\left(a\right) \right) N_m(a_0)T^{m}\\
			=&\, \s^{n}\left( \alpha_0\left(b \right) \right)N_n(a_0) s^{n}\left( \s^{m}\left( \alpha_0\left(a\right) \right) N_m(a_0)\right) T^{m+n}.
		\end{align*}
		Therefore, we need to show that
		$$
		\s^{m+n}\left( \alpha_0\left(a\s^m(b) \right) \right)N_{m+n}(a_0) =
		\s^{n}\left( \alpha_0\left(b \right) \right)N_n(a_0) \s^{n}\left( \s^{m}\left( \alpha_0\left(a\right) \right) N_m(a_0)\right),
		$$
		for all $a,b\in K$ and $m,n\geq 0$. Using Equation \ref{(Id)Ni+j} and the fact that $\s$ and $\alpha_0$ are ring homomorphisms, we can simplify this equation to obtain
		$$
		\s^{m+n}\left( \alpha_0\left(\s^m(b) \right) \right) =
		\s^{n}\left( \alpha_0\left(b \right) \right).
		$$
		It is now easy to use $\s\alpha_0 \s =\alpha_0$ to show that this relation holds for all $b\in K$ and $m,n\geq 0$. The uniqueness part follows from the fact that $K$ and $T$ generate $K[T;\s]$ as a ring. This completes the proof of the second statement. 
	\end{proof}
	Let $AAut(K,\s)$ denote the set of all anti-automorphisms of $K[T;\s]$ and $Aut_\s(K)$ denote the set of all automorphism $\alpha_0:K\to K$ such that $\s\alpha_0\s=\alpha_0$. Using Proposition \ref{(P)Anti-automorphism}, we obtain  a 1-1 correspondence between $AAut(K,\s)$ and the set $Aut_\s(K)\times K^*$, given by $\alpha\mapsto (\alpha|_{K},\alpha(T)T^{-1})$. It may happen that $Aut_\s(K)$ is empty, in which case $AAut(K,\s)$ is empty as well. As an example, the reader can verify that if $K$ is any cyclic extension of the field of rational numbers, and $\s$ is a generator of the Galois group of $K$ over $\mathbb{Q}$, then $AAut(K,\s)$  is nonempty iff $\s^2=1_K$ (in the case $\s^2=1_K$, we have $1_K\in Aut_\s(K)$). If $AAut(K,\s)$ is nonempty, then we have the following result concerning left and right roots:
	\begin{proposition}
		Let $\alpha\in AAut(K,\s)$ satisfy $\alpha(T)=T$, and set $\alpha_0=\alpha|_K$. Then, the following hold:\\
		(1) $a\in K$ is a left root of  a polynomial $P(T)\in K[T;\s]$ iff $\alpha_0(a)$ is a right root of $\alpha(P(T))$. \\
		(2) A polynomial $P(T)\in K[T;\s]$ is the  minimal left polynomial of a subset of $K$ iff $\alpha(P(T))$ is the  minimal right polynomial of a subset of $K$. \\
		(3) The assignment $S\mapsto \alpha(S)$ establishes a 1-1 correspondence between the set of all left $\s$-algebraic subsets of $K$ and the set of all right $\s$-algebraic subsets of $K$. Moreover, the left rank of $S$ is equal to the right rank of $\alpha(S)$.
	\end{proposition}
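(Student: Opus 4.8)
The plan is to reduce all three parts to the single identity that $\alpha$ carries the minimal left polynomial of a set $S$ onto the minimal right polynomial of $\alpha_0(S)$, and then read off (1)--(3) from it.

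First, for part (1) I would use the left division characterization recalled above: $a$ is a left root of $P(T)$ iff $P(T)\in (T-a)K[T;\s]$, say $P(T)=(T-a)Q(T)$. Applying the anti-automorphism $\alpha$ and using $\alpha(T)=T$ together with $\alpha|_K=\alpha_0$ gives $\alpha(P(T))=\alpha(Q(T))(T-\alpha_0(a))$, so that $\alpha(P(T))\in K[T;\s](T-\alpha_0(a))$, i.e.\ $\alpha_0(a)$ is a right root of $\alpha(P(T))$. The converse is obtained by applying the same argument to $\alpha^{-1}$, which again fixes $T$ and restricts to $\alpha_0^{-1}$ on $K$.

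The key preliminary observation for parts (2) and (3) is that $\alpha$ preserves degree and monicity. Indeed, since $\alpha(T)=T$ forces $a_0=1$ in the description of Proposition \ref{(P)Anti-automorphism}, and $N_i(1)=1$, the explicit formula there reduces to $\alpha(\sum_i b_iT^i)=\sum_i\s^i(\alpha_0(b_i))T^i$; hence a monic polynomial of degree $n$ is sent to a monic polynomial of degree $n$. Next I would establish, exactly as in part (1), the ideal identity $\alpha\big((T-a)K[T;\s]\big)=K[T;\s](T-\alpha_0(a))$ for every $a\in K$. Since $\alpha$ is a bijection it commutes with intersections, so it carries the right ideal of polynomials left-vanishing on $S$, namely $\bigcap_{a\in S}(T-a)K[T;\s]$, onto the left ideal of polynomials right-vanishing on $\alpha_0(S)=\alpha(S)$, namely $\bigcap_{a\in S}K[T;\s](T-\alpha_0(a))$. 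Comparing the monic generators of these two principal ideals (using that $K[T;\s]$ is a two-sided PID because $\s$ is an automorphism) together with the degree/monicity preservation yields
$$\alpha\big(P^{\,l}_S\big)=P_{\alpha_0(S)},$$
where $P^{\,l}_S$ denotes the minimal left polynomial of $S$ and $P_{\alpha_0(S)}$ the minimal right polynomial of $\alpha_0(S)$.

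From this identity parts (2) and (3) follow formally. For (2): if $P=P^{\,l}_S$ then $\alpha(P)=P_{\alpha_0(S)}$ is a minimal right polynomial; conversely, if $\alpha(P)$ is the minimal right polynomial of some $S'$, then applying the analogous identity for the anti-automorphism $\alpha^{-1}$ (with the roles of left and right interchanged) shows $P=\alpha^{-1}(\alpha(P))$ is the minimal left polynomial of $\alpha_0^{-1}(S')$. For (3): because $\alpha_0$ is an automorphism of $K$, the assignment $S\mapsto \alpha(S)=\alpha_0(S)$ is a bijection on subsets of $K$; it sends left $\s$-algebraic sets to right $\s$-algebraic sets and back, since $\alpha$ sends the nonzero right ideal of left-vanishing polynomials to the nonzero left ideal of right-vanishing polynomials, and degree preservation gives that the left rank of $S$ equals the right rank of $\alpha(S)$. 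I expect the only delicate point to be the anti-automorphism bookkeeping, that is, tracking the reversal of products (hence the swap of left and right ideals) and confirming that $\alpha$ genuinely preserves the monic generators, rather than any conceptually hard step.
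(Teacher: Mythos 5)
Your proof is correct and part (1) is argued exactly as in the paper: write $P(T)=(T-a)Q(T)$, apply $\alpha$ using $\alpha(T)=T$ and $\alpha|_K=\alpha_0$, and handle the converse via $\alpha^{-1}$. The paper simply asserts that (2) and (3) "follow easily from (1)"; your ideal-theoretic bookkeeping (degree/monicity preservation, $\alpha$ carrying the right ideal of left-vanishing polynomials onto the left ideal of right-vanishing polynomials on $\alpha_0(S)$, hence $\alpha(P^{\,l}_S)=P_{\alpha_0(S)}$) is a valid and complete way of filling in exactly those omitted details.
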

	 \begin{proof}
	 	(1) If $a$ is a left root of $P(T)\in K[T;\s]$, then 
	 	$$P(T)=(T-a) Q(T),$$
	 	for some $Q(T)\in K[T;\s]$. Applying $\alpha$ to this relation gives
	 	$$\alpha(P(T))=\alpha((T-a)Q(T))=\alpha(Q(T))\alpha(T-a)=\alpha(Q(T))(T-\alpha_0(a)),$$
	 	showing that $\alpha(a_0)$ is a right root of $\alpha(P(T))$. The converse is proved in a similar fashion. Now, Parts
	 	(2) and (3) follow easily from (1).
	 \end{proof}
 	In the case when $\s$ is an involution, the identity map of $K[T;\s]$ belongs to $AAut(K,\s)$. In this case, an application of this proposition gives the following: (1) 
 	$a$ is a left root of $\sum_{i=0}^n a_iT^i$ iff $a$ is a right root  of $\sum_{i=0}^nT^ia_i$; (2) $S\subset K$ is left $\s$-algebraic iff it is right $\s$-algebraic, and moreover, the left rank of $S$ is equal to its right rank. 
\end{subsection}
%%%%%%%%%%%%%%%%%%%%%%%%%%%%%%%%%%%%%%%%%%%%%%%%%%%%%%%%%%%%%%%%%   
%%%%%%%%%%%%%%%%%%%%%%%%%%%%%%%%%%%%%%%%%%%%%%%%%%%%%%%%%%%%%%%%% 
%%%%%%%%%%%%%%%%%%%%%%%%%%%%%%%%%%%%%%%%%%%%%%%%%%%%%%%%%%%%%%%%% 
\begin{subsection}{Values of skew polynomials}
	In this subsection, we present some elementary results concerning values of skew polynomials. 	As opposed to ordinary polynomials in an indeterminate over commutative fields, nonzero skew polynomials over commutative fields may have infinitely many roots. For example, the polynomial 
	$$T^2-1\in \mathbb{C}[T;\,\bar\, \,],$$
	vanishes on the unit circle $\{z\in\mathbb{C}\,|\,|z|=1\}$.  However, we have the following result, which is well-known for ordinary polynomials over commutative fields. The result also holds for  polynomials in a central indeterminate over a skew field  (see \cite[Theorem 16.7]{lam1991first}). 
	\begin{proposition}
		Let $(K,\sigma)$ be an infinite $\s$-field and  $P(T)\in K[T;\sigma]$. If $P(a)=0$ for all $a\in K$, then $P(T)=0$. 
	\end{proposition}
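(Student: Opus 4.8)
The plan is to avoid evaluating $P$ at a completely arbitrary element and instead restrict attention to the single $\s$-conjugacy class of $1$, namely $C(1)=\{\s(x)x^{-1}\mid x\in K^*\}$, where the value map collapses to something linear in $\s$. Writing $P(T)=\sum_{i=0}^n a_iT^i$, I would apply identity~(\ref{(Id)Niofconjugate}) with $a=x$ and $b=1$ to obtain $N_i(\s(x)x^{-1})=\s^i(x)N_i(1)x^{-1}=\s^i(x)x^{-1}$, and hence
$$P(\s(x)x^{-1})=\sum_{i=0}^n a_i\s^i(x)\,x^{-1}=\Big(\sum_{i=0}^n a_i\s^i(x)\Big)x^{-1}=\Phi(x)\,x^{-1},$$
where $\Phi:=\sum_{i=0}^n a_i\s^i$ is an additive operator on $K$. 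Since $\s(x)x^{-1}\in K$, the hypothesis that $P$ vanishes on all of $K$ forces $\Phi(x)=0$ for every $x\in K^*$, while $\Phi(0)=0$ trivially; thus $\Phi$ is the zero map on $K$. The problem is thereby reduced to showing that $\sum_{i=0}^n a_i\s^i=0$ as a map $K\to K$ implies $a_0=\cdots=a_n=0$.

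At this point I would split on the order of $\s$. If $\s$ has infinite order, then $1_K=\s^0,\s^1,\dots,\s^n$ are pairwise distinct field embeddings $K\to K$ — distinctness uses the injectivity of $\s$, since $\s^i=\s^j$ with $i<j$ would give $\s^{j-i}=1_K$ — so Dedekind's lemma on the linear independence of distinct characters $K^*\to K^*$ gives $a_i=0$ for all $i$, and we are done.

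The finite-order case is the main obstacle, because there the reduction above is lossy: if $\s^d=1_K$ the powers $\s^i$ repeat, and $\Phi=0$ no longer separates the coefficients (the class $C(1)$ simply does not see enough of $K$). In that case $\s$ is an automorphism, $\langle\s\rangle$ is finite of order $d$, and Artin's theorem gives $[K:K^\s]=d<\infty$; since $K$ is infinite, the fixed field $K^\s$ is infinite. I would then take an arbitrary $a\in K^*$ and restrict the scaling variable to $K^\s$: for $\lambda\in K^\s$ one has $N_i(\lambda a)=N_i(\lambda)N_i(a)=\lambda^iN_i(a)$, so
$$P(\lambda a)=\sum_{i=0}^n\big(a_iN_i(a)\big)\lambda^i=:g(\lambda)$$
is an ordinary polynomial in $\lambda$ of degree at most $n$ over $K$. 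Because $\lambda a\in K$, the hypothesis yields $g(\lambda)=0$ for all $\lambda\in K^\s$, so $g$ has infinitely many roots in $K$ and must be the zero polynomial; hence each coefficient $a_iN_i(a)$ vanishes. Finally $N_i(a)=\s^{i-1}(a)\cdots\s(a)a\neq 0$ since $\s$ is injective and $a\neq 0$, so $a_i=0$ for every $i$ and $P=0$. The two cases together establish the proposition, the only delicate points being the distinctness of the $\s^i$ (controlled by the infinite/finite-order dichotomy) and the non-vanishing of the $N_i(a)$.
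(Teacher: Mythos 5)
Your proof is correct, but it takes a genuinely different route from the paper's. The paper argues by contradiction: if a nonzero $P$ vanished on all of $K$, every $\s$-conjugacy class would be $\s$-algebraic, and it then invokes the cited result of Lam--Leroy (Theorem 5.17 of \emph{op.\ cit.}) to conclude that $[K:F]$ is finite for the fixed field $F$; hence $F$ is infinite and the ordinary polynomial $\sum a_i x^i$ has infinitely many roots in $K$, a contradiction. You avoid that external theorem entirely and split on the order of $\s$. In the infinite-order case you restrict $P$ to the single conjugacy class $C(1)$, where the evaluation collapses to the additive operator $\Phi=\sum a_i\s^i$, and then kill the coefficients by Dedekind--Artin linear independence of the distinct characters $\s^i|_{K^*}$; this part is new relative to the paper and in fact shows that vanishing on $C(1)$ alone already forces $P=0$ when no power of $\s$ is the identity. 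In the finite-order case you recover the finiteness of $[K:K^\s]$ from Artin's fixed-field theorem rather than from Lam--Leroy, and then run essentially the paper's endgame (an ordinary polynomial in $\lambda\in K^\s$ with infinitely many roots); your extra scaling by an arbitrary $a\in K^*$ is harmless but unnecessary, since evaluating at $\lambda\in K^\s$ directly already yields $\sum a_i\lambda^i=0$. The trade-off is clear: the paper's proof is shorter given the cited machinery, while yours is self-contained, elementary, and isolates exactly which subsets of $K$ the hypothesis really needs to see in each case.
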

	\begin{proof}
		Let $F$ be the fixed field of $\s$. Suppose, on the contrary, that a skew polynomial $P(T)=\sum_{i=0}^n a_i T^i\neq 0$ vanishes on $K$.  Then, all the $\s$-conjugacy classes of $K$ must be $\s$-algebraic. It follow from   \cite[Theorem 5.17]{lamleroy1988algebraic} that the degree $[K:F]$ of the extension $K/F$ is finite. Therefore, $F$ must be an infinite field. Since
		$P(c)=\sum_{i=0}^n a_i c^i,$
		where $c\in F$, the nonzero ordinary polynomial $\sum_{i=0}^n a_i x^i\in K[x],$ must have infinitely many zeros over the commutative field $K$, a contradiction.   	   
	\end{proof}
	This theorem does not hold for finite $\s$-fields:  Since any finite subset of a $\s$-field is $\s$-algebraic, we see that there is a nonzero polynomial $P(T)\in K[T;\sigma]$ vanishing on $K$ if $(K,\sigma)$ is a finite $\s$-field, in which case, the nonconstant polynomial $P(T)+1$ has no roots in $K$. We note in passing that if $(K,\s)$ is a finite $\s$-field, then the set of all $P(T)\in K[T;\sigma]$  which vanish on $K$ is a nonzero (two-sided) ideal of $ K[T;\sigma]$. 

	Next, we prove a version of the Bray--Whaples theorem for skew polynomials, see \cite{braywhaples1983polynomials} for the result which deals with polynomials in a central indeterminate over a skew field. 
	\begin{proposition}\label{(P)Bray_Whaples}
		Let $a_1,...,a_n\in K$ be mutually nonconjugate over $K$. Then, there is a unique monic skew polynomial $P(T)\in K[T;\s]$ of degree $n$ whose roots are exactly $a_1,...,a_n$. Moreover,  if $$Q(a_1)=\cdots=Q(a_n)=0,$$ where  $Q(T)\in K[T;\s]$, then $Q(T)\in  K[T;\s]P(T)$.   
	\end{proposition}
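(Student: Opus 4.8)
The plan is to take for $P$ the minimal skew polynomial $P_S$ of the finite (hence $\s$-algebraic) set $S=\{a_1,\dots,a_n\}$ and to read off every assertion from the known properties of $P_S$. First I would check that $rk(S)=n$ and that $S$ is P-independent. Any nonzero polynomial vanishing on $S$ has the $n$ mutually nonconjugate elements $a_1,\dots,a_n$ among its roots, hence roots in $n$ distinct $\s$-conjugacy classes, so by the consequence of Lemmas~\ref{(L)DivisionP(a)} and \ref{(L)ProductPQ(a)} that a polynomial of degree $d$ has roots in at most $d$ classes, its degree is $\ge n$; together with $rk(S)=\mathrm{rank}\,V^\s(a_1,\dots,a_n)\le n$ from Theorem~\ref{(T)Vandermonde}, this gives $rk(S)=n$, so $\dim N_S=n$ and the $n$ generators $N(a_1),\dots,N(a_n)$ of $N_S$ are independent; by Proposition~\ref{(P)reformulationofVandermonde} the set $S$ is P-independent. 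Now $P:=P_S$ is monic of degree $n$ and vanishes on $S$. Uniqueness is then free: a monic degree-$n$ polynomial vanishing on $S$ lies in $K[T;\s]P_S$ and, having the same degree, equals $P_S$; and the final clause is exactly the defining property of $P_S$, so $Q(a_1)=\cdots=Q(a_n)=0$ gives $Q\in K[T;\s]P_S=K[T;\s]P$.

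It remains to show that $P_S$ has no roots besides $a_1,\dots,a_n$, and this is the step I expect to be the real obstacle, since a nonzero skew polynomial may vanish on an entire conjugacy class (for instance $T^2-1$ vanishes on the whole unit circle when $\s$ is complex conjugation on $\mathbb{C}$). As $\deg P_S=n$, its roots occupy at most $n$ classes, necessarily $C(a_1),\dots,C(a_n)$; so it suffices to prove that $P_S$ has a single root in each $C(a_i)$. Fixing $i$ and setting $Q:=P_{S\setminus\{a_i\}}$, the first paragraph (applied to the $(n-1)$-element set) gives $\deg Q=n-1$ and, by the class count again, $Q$ has no root in $C(a_i)$; since $P_S$ vanishes on $S\setminus\{a_i\}$ we may write $P_S=(T-b)Q$ with a monic degree-one cofactor. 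Writing $a^u:=\s(u)a u^{-1}$ and applying the product formula~\ref{(L)ProductPQ(a)} (legitimate because $Q(y)\neq0$ for $y\in C(a_i)$), the roots of $P_S$ in $C(a_i)$ are exactly the $y=a_i^{\,w}$ with $\s(Q(y))\,y\,Q(y)^{-1}=b$, and $a_i$ is among them, so $b=a_i^{\,Q(a_i)}$.

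To resolve this equation I would introduce the additive map $\Phi\colon K\to K$, $\Phi(w)=\sum_k q_k\,\s^k(w)\,N_k(a_i)$ for $Q=\sum_k q_kT^k$, noting that $\Phi(w)=(Qw)(a_i)$. A short manipulation recasts the condition $\s(Q(y))yQ(y)^{-1}=b$ for $y=a_i^{\,w}$ as $a_i^{\,\Phi(w)}=a_i^{\,\Phi(1)}$, that is, $\Phi(w)\in Q(a_i)\,C_{a_i}$, where $C_{a_i}=\{t\in K^*:\s(t)a_i=a_it\}$ is the stabilizer and $\Phi(1)=Q(a_i)$. Two facts then finish the argument. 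First, the product formula applied to $Q$ times the constant $w$ gives $\Phi(w)=Q(a_i^{\,w})\,w$; as $Q$ has no root in $C(a_i)$ this is nonzero for every $w\neq0$, so $\Phi$ is injective. Second, for $t\in C_{a_i}$ one has $\s^k(t)=N_k(a_i)\,t\,N_k(a_i)^{-1}$, whence $\s^k(t)N_k(a_i)=N_k(a_i)t$ and $\Phi(t)=Q(a_i)\,t$; thus $\Phi$ carries $C_{a_i}$ onto $Q(a_i)C_{a_i}$. Combining the two, $\Phi(w)\in Q(a_i)C_{a_i}$ forces $\Phi(w)=\Phi(t)$ for some $t\in C_{a_i}$, and injectivity yields $w=t\in C_{a_i}$, i.e.\ $y=a_i^{\,w}=a_i$. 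Hence $a_i$ is the unique root of $P_S$ in $C(a_i)$; letting $i$ vary shows the roots of $P_S$ are precisely $a_1,\dots,a_n$, which completes the proof.
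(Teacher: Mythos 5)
Your proof is correct, but the core step (that $P_S$ has no roots besides $a_1,\dots,a_n$) is handled quite differently from the paper, and at considerably greater length. The paper argues by contradiction in one stroke: if $b\notin\{a_1,\dots,a_n\}$ were a root, then $P_S(T)=Q(T)(T-b)$ with $\deg Q=n-1$, and since $b\neq a_i$ the product formula (Lemma \ref{(L)ProductPQ(a)}) forces $Q$ to have the roots $c_i=\sigma(a_i-b)a_i(a_i-b)^{-1}$, which lie in $n$ distinct conjugacy classes --- impossible for a polynomial of degree $n-1$. Note that this works uniformly whether or not $b$ is conjugate to some $a_i$; all that matters is $b\neq a_i$. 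Your route instead peels off a \emph{left} linear factor, $P_S=(T-b)P_{S\setminus\{a_i\}}$, and then classifies the roots of $P_S$ inside each class $C(a_i)$ by parametrizing $y=\sigma(w)a_iw^{-1}$ and studying the additive map $\Phi(w)=(Qw)(a_i)=Q(a_i^{\,w})w$; injectivity of $\Phi$ plus its action on the stabilizer pins down a single root per class. This is valid (I checked the identities $\Phi(w)=Q(a_i^{\,w})w$ and $a_i^{\,\Phi(w)}=\sigma(Q(y))yQ(y)^{-1}$, and the degenerate class $\{0\}$ is harmless since it is a singleton), and it buys you slightly more: an explicit linear-algebraic description of the root set within each conjugacy class, in the spirit of the spaces $E(P,1)$ used later in the paper. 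On the other hand it is machinery the statement does not require --- over a commutative $K$ your stabilizer $C_{a_i}$ is just $F^*$ for the fixed field $F$, so the relation $\sigma^k(t)=N_k(a_i)tN_k(a_i)^{-1}$ reduces to $\sigma^k(t)=t$ --- and the paper's two-line contradiction is the more economical argument. Your first paragraph (taking $P=P_S$, showing $\deg P_S=n$ via the conjugacy-class count, and deducing uniqueness and the divisibility clause from minimality) matches the paper exactly.
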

	\begin{proof}
		Let $P(T)\in K[T;\s]$ be the minimal skew polynomial of $\{a_1,...,a_n\}$. Since $a_1,...,a_n$ are mutually nonconjugate, the degree of $P(T)$ must be $n$. Therefore, it remains to show that $P(T)$ has no  roots other than $a_1,...,a_n$. This holds trivially if $n=1$. So, let $n>1$. Suppose, on the contrary, that there exists $b\in K$ not equal to $a_1,...,a_n$ such that $P(b)=0$. By Lemma \ref{(L)DivisionP(a)}, we have
		$$P(T)=Q(T)(T-b),$$
		for some $Q(T)\in K[T;\s]$.  Since $b\neq a_i$ for all $i$, 
		it follows from Lemma \ref{(L)ProductPQ(a)} that  the skew polynomial $Q(T)$ has the following roots $$c_i=\sigma(a_i-b) a_i(a_i-b)^{-1} \text{ where } i=1,...,n.$$  Clearly, each $c_i$ is conjugate to $a_i$. Therefore, the roots $c_1,...,c_n$  of $Q(T)$ are mutually nonconjugate, contradicting the fact that a skew polynomial of degree less than $n$ cannot have roots in $n$ different conjugacy classes.  The second statement follows from the fact that $P(T)$ is the minimal  polynomial of $\{a_1,...,a_n\}$.  
	\end{proof}
	In general, the classical Lagrange interpolation does not work for skew polynomials since an element $a\in K$ can be P-dependent on a subset not containing $a$ (see the discussion after Proposition \ref{(P)Interpolation}). However, we have the following version of interpolation for skew polynomials, which can be regarded as a generalization of Proposition \ref{(P)Bray_Whaples}. The problem of interpolation for polynomials in a central indeterminate over a skew field is treated in \cite{bolotnikov2020lagrange}.
	 \begin{proposition}\label{(P)Interpolation}
	 	Let $S=\{a_1,...,a_n\}\subset K$ be P-independent. For any $b_1,...,b_n\in K$,  there  exists a unique skew polynomial $P(T)\in K[T;\s]$ of degree less than $n=rk (S)$ such that $P(a_i)=b_i$ for all $i$. Moreover, a  polynomial $Q(T)\in K[T;\s]$  satisfies $Q(a_i)=b_i$ for all $i$, iff $$Q(T)\in P(T)+K[T;\s]P_S(T).$$
	 \end{proposition}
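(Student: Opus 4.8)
The plan is to reduce the existence and uniqueness of the interpolating polynomial to a linear-algebra statement about the $\s$-Vandermonde matrix, and then to deduce the description of all solutions from the additivity of evaluation together with the minimal-polynomial characterization of vanishing.

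First I would write a candidate polynomial of degree less than $n$ with unknown coefficients, $P(T)=\sum_{i=0}^{n-1}c_iT^i$. Since $K$ is commutative and evaluation is given by $P(a_j)=\sum_{i=0}^{n-1}c_iN_i(a_j)$, the interpolation conditions $P(a_j)=b_j$ for $j=1,\dots,n$ form a genuine linear system in the unknowns $c_0,\dots,c_{n-1}$ over the field $K$, whose coefficient matrix is the transpose of the $\s$-Vandermonde matrix $V^\s(a_1,\dots,a_n)$. Because $S$ is P-independent, Theorem \ref{(T)Vandermonde}(a) tells us that $V^\s(a_1,\dots,a_n)$ is invertible; hence the system has a unique solution $(c_0,\dots,c_{n-1})$, yielding exactly one polynomial of degree less than $n$ with $P(a_i)=b_i$ for all $i$. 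That $rk(S)=n$, as the statement asserts, is consistent with this: by Proposition \ref{(P)reformulationofVandermonde}, P-independence of $\{a_1,\dots,a_n\}$ says the vectors $N(a_1),\dots,N(a_n)$ are linearly independent, so $\dim N_S=n$.

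For the description of all solutions, I would use that the evaluation map $R\mapsto R(a)$ is additive, so $(Q-P)(a_i)=Q(a_i)-P(a_i)$ and therefore $Q(a_i)=b_i$ for every $i$ precisely when $Q-P$ vanishes on $S$. By the minimal-polynomial property recalled earlier, $Q-P$ vanishes on $S$ if and only if $Q-P\in K[T;\s]P_S(T)$: the forward implication is the defining property of $P_S(T)$, while the reverse uses the product formula (Lemma \ref{(L)ProductPQ(a)}), which shows that any left multiple $R(T)P_S(T)$ vanishes wherever $P_S(T)$ does. Rearranging gives $Q\in P+K[T;\s]P_S(T)$, as required.

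The argument is essentially routine once the setup is correct, so there is no serious obstacle; the one point demanding care is recognizing that, because $\delta=0$ and $K$ is commutative, each $N_i(a_j)$ is an honest scalar and the interpolation conditions really do assemble into an invertible square linear system over $K$ — this is exactly where the hypothesis of P-independence enters, via the equivalence with invertibility of $V^\s$ in Theorem \ref{(T)Vandermonde}. A minor secondary point is the reverse implication in the ``moreover'' part, which must invoke the product formula rather than being purely formal.
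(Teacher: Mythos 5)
Your proof is correct, but it takes a different route from the paper for the existence--uniqueness part. You set up the conditions $P(a_j)=\sum_{i=0}^{n-1}c_iN_i(a_j)=b_j$ as a square linear system over $K$ whose coefficient matrix is (the transpose of) $V^\s(a_1,\dots,a_n)$, and invoke Theorem \ref{(T)Vandermonde}(a) to get invertibility from P-independence; existence and uniqueness then come for free. The paper instead builds the solution explicitly in Lagrange style: it takes $P_i(T)$ to be the minimal polynomial of $S\setminus\{a_i\}$ (of degree $n-1$), notes that P-independence forces $P_i(a_i)\neq 0$, and sets $P(T)=\sum_{i=1}^n b_iP_i(a_i)^{-1}P_i(T)$, with uniqueness following from the degree bound. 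The two arguments are equivalent in substance --- both ultimately rest on the Vandermonde/P-independence machinery --- but the paper's explicit basis $\{P_i\}$ is reused immediately afterwards (in the formula $Q(a)=\sum_i Q(a_i)P_i(a_i)^{-1}P_i(a)$ for elements P-dependent on $S$), and Theorem \ref{(T)Vieta}(a) is later advertised as a way to compute these $P_i$; your nonconstructive version does not deliver that formula directly, though it could be extracted by inverting the matrix. Your handling of the ``moreover'' clause (additivity of evaluation, the defining property of $P_S$ for one direction, and the product formula for the other) is sound and in fact more explicit than the paper, which leaves that part essentially unargued.
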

	 \begin{proof}
	 	We note that $rk(S)=n$ by  \cite[Lemma 12]{lam1985general}. Let $P_i(T)\in K[T;\s]$ be the minimal skew polynomial of $S\setminus \{a_i\}$. The degree of each $P_i(T)$ is $rk (S)-1=n-1$. Since $a_1,...,a_n$ are P-independent, $P_i(a_i)$ cannot be zero. It is easy to verify that the polynomial 
	 	$$P(T)=\sum_{i=1}^nb_iP_i(a_i)^{-1}P_i(T),$$
	 	satisfies the desired property. The uniqueness of $P(T)$ follows from the fact that  the degree of each $P_i(T)$ is less than $rk (S)$. 
	 \end{proof}  
 	A consequence of this proposition is that if $a\in K$ is P-dependent on a P-independent set $S=\{a_1,...,a_n\}$, then for any $Q(T)\in K[T;\s]$, the value of $Q(a)$ is completely determined by the values $Q(a_1),...,Q(a_n)$. In fact, we have 
 	$$Q(a)=\sum_{i=1}^nQ(a_i)P_i(a_i)^{-1}P_i(a),$$
	where $P_i(T)$ is the minimal skew polynomial of $S\setminus \{a_i\}$. More generally, one can easily show that the value of $Q(a)$ is completely determined by the values $Q(a), a\in S$, if $a$ is P-dependent on a (not necessarily P-independent) $\s$-algebraic set $S\subset K$.   	 
\end{subsection}
%%%%%%%%%%%%%%%%%%%%%%%%%%%%%%%%%%%%%%%%%%%%%%%%%%%%%%%%%%%%%%%%%   
%%%%%%%%%%%%%%%%%%%%%%%%%%%%%%%%%%%%%%%%%%%%%%%%%%%%%%%%%%%%%%%%% 
%%%%%%%%%%%%%%%%%%%%%%%%%%%%%%%%%%%%%%%%%%%%%%%%%%%%%%%%%%%%%%%%% 
\begin{subsection}{Vieta's formulas for skew polynomials }
	The classical formulas of Vieta express the coefficients of an ordinary polynomial over a commutative field in terms of its roots. 
	 Here, we present analogous formulas for skew polynomials.    	
	The formulas in the second part of the following result can be regarded as analogues of Vieta's formulas (see  \cite{delenclos2007noncommutative}, \cite[Section 7.2]{Gelfandetal1995study} and \cite{gelfand1996noncommutative} for a more general discussion). 
	\begin{theorem}\label{(T)Vieta}
		Let $S=\{a_1,...,a_n\}$ be P-independent with minimal polynomial $P_S(T)=\sum_{i=0}^nb_i T^n\in K[T;\s]$. \\
		(a) For every $a\in K$, we have
		$$P_S(a)=\frac{ \det \left( V^\s(a_1,...,a_n,a)\right)  }{\det \left( V^\s(a_1,...,a_n)\right) }.$$
		(b) For $i=0,...,n$, 
		\begin{equation}\label{(F)MinimalVandermonde}
			b_i=\frac{(-1)^{n+i}}{\det \left( V^\s(a_1,...,a_n)\right)}\begin{vmatrix}
				1&1&\cdots&1\\
				N_{1}(a_1)&N_{1}(a_2)&\cdots&N_{1}(a_n)\\
				\vdots&\vdots&\cdots&\vdots\\
				N_{i-1}(a_1)&N_{i-1}(a_2)&\cdots&N_{i-1}(a_n)\\
				N_{i+1}(a_1)&N_{i+1}(a_2)&\cdots&N_{i+1}(a_n)\\
				\vdots&\vdots&\cdots&\vdots\\
				N_{n}(a_1)&N_{n}(a_2)&\cdots&N_{n}(a_n)\\
			\end{vmatrix}
		\end{equation}
	Moreover, 
	$b_0=(-1)^{n}\, \frac{\s\left( \det \left( V^\s(a_1,...,a_n)\right)\right) }{\det \left( V^\s(a_1,...,a_n)\right)}\, a_1a_2\cdots a_n.$
%	$$b_{n-1}=-\, \frac{\s\left( \det \left( V^\s(a_1,...,a_n)\right)\right) }{\det \left( V^\s(a_1,...,a_n)\right)}\, (a_1+a_2+\cdots +a_n).$$

	\end{theorem}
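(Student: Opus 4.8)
The plan is to derive both parts from a single identity in $K[T;\s]$. Throughout, write $\Delta=\det\bigl(V^\s(a_1,\dots,a_n)\bigr)$; this is nonzero because $S$ is P-independent (Theorem~\ref{(T)Vandermonde}). Expanding the $(n+1)\times(n+1)$ determinant $\det\bigl(V^\s(a_1,\dots,a_n,a)\bigr)$ along its last column, whose entries are $N_0(a),\dots,N_n(a)$, yields
$$\det\bigl(V^\s(a_1,\dots,a_n,a)\bigr)=\sum_{i=0}^{n}c_i\,N_i(a),$$
where the cofactor $c_i$ depends only on $a_1,\dots,a_n$ and equals $(-1)^{n+i}$ times the minor obtained by deleting the $N_i$-row from the first $n$ columns. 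Setting $R(T)=\sum_{i=0}^{n}c_iT^i\in K[T;\s]$, this says exactly that $R(a)=\det\bigl(V^\s(a_1,\dots,a_n,a)\bigr)$ for every $a\in K$. The crux of the proof is the identity $R(T)=\Delta\,P_S(T)$.

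To establish it, first I would note that $R$ vanishes on $S$: putting $a=a_j$ makes the last column of the matrix coincide with its $j$-th column, so $R(a_j)=\det\bigl(V^\s(a_1,\dots,a_n,a_j)\bigr)=0$ for each $j$. By the defining property of the minimal polynomial, $R(T)\in K[T;\s]\,P_S(T)$. Next, the leading cofactor $c_n$ is the minor obtained by deleting the last row and the last column, which is precisely $\Delta\neq0$; hence $\deg R=n=\deg P_S$, so in the factorization $R(T)=Q(T)P_S(T)$ the quotient $Q(T)$ is a constant in $K$. Comparing leading coefficients, and recalling that $P_S$ is monic, forces this constant to be $\Delta$, so $R(T)=\Delta\,P_S(T)$. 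It is essential here that $\Delta$ multiplies $P_S$ on the \emph{left}: the relation $R(T)=\Delta\,P_S(T)$ means $c_i=\Delta b_i$ coefficientwise, whence $R(a)=\sum_i\Delta b_iN_i(a)=\Delta\,P_S(a)$, notwithstanding that evaluation of skew polynomials is not in general multiplicative.

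Part~(a) is now immediate: $\det\bigl(V^\s(a_1,\dots,a_n,a)\bigr)=R(a)=\Delta\,P_S(a)$, and dividing by $\Delta$ gives the claimed quotient. For the first formula of part~(b), I would simply read off $c_i=\Delta b_i$, that is $b_i=\Delta^{-1}c_i=(-1)^{n+i}\Delta^{-1}$ times the displayed minor, which is \eqref{(F)MinimalVandermonde}; the only point requiring care is the cofactor sign $(-1)^{(i+1)+(n+1)}=(-1)^{n+i}$ coming from the position of $N_i(a)$ in the last column.

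For the closed form of $b_0$, take $i=0$, so the relevant minor $M_0$ has rows $N_1,\dots,N_n$ and columns $a_1,\dots,a_n$. Using $N_i(a_j)=\s\bigl(N_{i-1}(a_j)\bigr)a_j$ together with the commutativity of $K$, I factor $a_j$ out of the $j$-th column of $M_0$, obtaining $M_0=a_1\cdots a_n\cdot\det\bigl(\s(N_{i-1}(a_j))\bigr)_{1\le i,j\le n}$. The remaining matrix is the entrywise image under $\s$ of $V^\s(a_1,\dots,a_n)$; since the determinant is a polynomial with integer coefficients in the entries and $\s$ is a ring homomorphism, its determinant is $\s(\Delta)$. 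Substituting $M_0=\s(\Delta)\,a_1\cdots a_n$ into $b_0=(-1)^n\Delta^{-1}M_0$ gives the asserted expression for $b_0$. I expect the main obstacle to be purely bookkeeping: keeping the cofactor signs and the row-index shifts consistent, and confirming that once commutativity of $K$ reduces everything to determinants of scalar matrices, the only trace of $\s$ is the homomorphism property $\det(\s A)=\s(\det A)$.
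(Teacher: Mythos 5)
Your proof is correct and follows essentially the same route as the paper: cofactor expansion of $\det\bigl(V^\s(a_1,\dots,a_n,a)\bigr)$ along the last column, identification of the resulting skew polynomial as $\Delta\,P_S(T)$ via vanishing on $S$ plus degree and leading-coefficient comparison, and for $b_0$ the same column factorization using $N_i(a_j)=\s(N_{i-1}(a_j))a_j$ and $\det(\s A)=\s(\det A)$. The only cosmetic difference is that you justify $R(a_j)=0$ by the repeated-column argument where the paper cites Theorem~\ref{(T)Vandermonde}(a); both are valid.
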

	 \begin{proof}
	 	Using cofactor expansion along the last column of the matrix $V^\s(a_1,...,a_n,a)$, we obtain
	 	$$\det \left( V^\s(a_1,...,a_n,a)\right)=\sum_{i=0}^n c_i N_i(a),$$
	 	where $$c_i=(-1)^{n+i}\begin{vmatrix}
	 		1&1&\cdots&1\\
	 		N_{1}(a_1)&N_{1}(a_2)&\cdots&N_{1}(a_n)\\
	 		\vdots&\vdots&\cdots&\vdots\\
	 		N_{i-1}(a_1)&N_{i-1}(a_2)&\cdots&N_{i-1}(a_n)\\
	 		N_{i+1}(a_1)&N_{i+1}(a_2)&\cdots&N_{i+1}(a_n)\\
	 		\vdots&\vdots&\cdots&\vdots\\
	 		N_{n}(a_1)&N_{n}(a_2)&\cdots&N_{n}(a_n)\\
	 	\end{vmatrix}.$$  Seting $Q(T)=\sum_{i=0}^nc_i T^n$, we have
	 	$$Q(a)=\sum_{i=0}^nc_i N_i(a)=\det \left( V^\s(a_1,...,a_n,a)\right),$$
	 	for all $a\in K$. By Part (a) of Theorem \ref{(T)Vandermonde}, $Q(a_i)=0$ for all $i=1,...,n$. It follows that   
	 	$$Q(T)\in K[T;\s]P_S(T).$$
	 	Since $P_S(T)$ and $Q(T)$ have the same degree, and $c_n=\det (V^\s(a_1,...,a_n))$, we conclude that
	 	$$Q(T)=V^\s(a_1,...,a_n)P_S(T),$$
	 and
	 $$P_S(a)=\frac{ \det \left( V^\s(a_1,...,a_n,a)\right)  }{\det \left( V^\s(a_1,...,a_n)\right) },$$
	 for all $a\in K$.\
	 
	 (b) Formula \ref{(F)MinimalVandermonde} follows from (a). To prove the last formula, we set $i=0$ in Formula \ref{(F)MinimalVandermonde} and use Identity \ref{(Id)Ni+j}. We see that $(-1)^{n}b_0\det \left( V^\s(a_1,...,a_n)\right)$ is equal to
	 \begin{align*}
	 	  & \begin{vmatrix}
	 		N_{1}(a_1)&N_{1}(a_2)&\cdots&N_{1}(a_n)\\
	 		N_{2}(a_1)&N_{2}(a_2)&\cdots&N_{2}(a_n)\\
	 		\vdots&\vdots&\cdots&\vdots\\
	 		N_{n}(a_1)&N_{n}(a_2)&\cdots&N_{n}(a_n)\\
	 	\end{vmatrix}\\
 	=& \begin{vmatrix}
 		a_1&a_2&\cdots&a_n\\
 		a_1\s(N_{1}(a_1))&a_2\s(N_{1}(a_2))&\cdots&a_n\s(N_{1}(a_n))\\
 		a_1\s(N_{2}(a_1))&a_2\s(N_{2}(a_2))&\cdots&a_n\s(N_{2}(a_n))\\
 		\vdots&\vdots&\cdots&\vdots\\
 		a_1\s(N_{n-1}(a_1))&a_2\s(N_{n-1}(a_2))&\cdots&a_n\s(N_{n-1}(a_n))\\
 	\end{vmatrix}\\
 =& \begin{vmatrix}
 	1&1&\cdots&1\\
 	\s(N_{1}(a_1))&\s(N_{1}(a_2))&\cdots&\s(N_{1}(a_n))\\
 	\s(N_{2}(a_1))&a_2\s(N_{2}(a_2))&\cdots&\s(N_{2}(a_n))\\
 	\vdots&\vdots&\cdots&\vdots\\
 	\s(N_{n-1}(a_1))&\s(N_{n-1}(a_2))&\cdots&\s(N_{n-1}(a_n))\\
 \end{vmatrix}a_1a_2\cdots a_n\\
=& \s \left( \begin{vmatrix}
	1&1&\cdots&1\\
	N_{1}(a_1)&N_{1}(a_2)&\cdots&N_{1}(a_n)\\
	N_{2}(a_1)&N_{2}(a_2)&\cdots&N_{2}(a_n)\\
	\vdots&\vdots&\cdots&\vdots\\
	N_{n-1}(a_1)&N_{n-1}(a_2)&\cdots&N_{n-1}(a_n)\\
\end{vmatrix}\right) a_1a_2\cdots a_n\\
=& \s\left( \det \left( V^\s(a_1,...,a_n)\right)\right)  a_1a_2\cdots a_n,
	 \end{align*}
which proves the desired formula. 	 
	 \end{proof}
 We note that Part (a) of this theorem provides a method for finding the minimal polynomial of any finite P-independent set. In particular, one can apply this method to find the polynomials  $P_i(T)$'s used in the proof of Proposition \ref{(P)Interpolation}.  	
	 
\end{subsection}
%%%%%%%%%%%%%%%%%%%%%%%%%%%%%%%%%%%%%%%%%%%%%%%%%%%%%%%%%%%%%%%%% 

\end{section} 
%%%%%%%%%%%%%%%%%%%%%%%%%%%%%%%%%%%%%%%%%%%%%%%%%%%%%%%%%%%%%%%%
%%%%%%%%%%%%%%%%%%%%%%%%%%%%%%%%%%%%%%%%%%%%%%%%%%%%%%%%%%%%%%%%
%%%%%%%%%%%%%%%%%%%%%%%%%%%%%%%%%%%%%%%%%%%%%%%%%%%%%%%%%%%%%%%%
%%%%%%%%%%%%%%%%%%%%%%%%%%%%%%%%%%%%%%%%%%%%%%%%%%%%%%%%%%%%%%%%
%%%%%%%%%%%%%%%%%%%%%%%%%%%%%%%%%%%%%%%%%%%%%%%%%%%%%%%%%%%%%%%%
%%%%%%%%%%%%%%%%%%%%%%%%%%%%%%%%%%%%%%%%%%%%%%%%%%%%%%%%%%%%%%%%
%%%%%%%%%%%%%%%%%%%%%%%%%%%%%%%%%%%%%%%%%%%%%%%%%%%%%%%%%%%%%%%%
%%%%%%%%%%%%%%%%%%%%%%%%%%%%%%%%%%%%%%%%%%%%%%%%%%%%%%%%%%%%%%%%
%%%%%%%%%%%%%%%%%%%%%%%%%%%%%%%%%%%%%%%%%%%%%%%%%%%%%%%%%%%%%%%%
%%%%%%%%%%%%%%%%%%%%%%%%%%%%%%%%%%%%%%%%%%%%%%%%%%%%%%%%%%%%%%%%
%%%%%%%%%%%%%%%%%%%%%%%%%%%%%%%%%%%%%%%%%%%%%%%%%%%%%%%%%%%%%%%%
%%%%%%%%%%%%%%%%%%%%%%%%%%%%%%%%%%%%%%%%%%%%%%%%%%%%%%%%%%%%%%%%
%%%%%%%%%%%%%%%%%%%%%%%%%%%%%%%%%%%%%%%%%%%%%%%%%%%%%%%%%%%%%%%%
%%%%%%%%%%%%%%%%%%%%%%%%%%%%%%%%%%%%%%%%%%%%%%%%%%%%%%%%%%%%%%%%
%%%%%%%%%%%%%%%%%%%%%%%%%%%%%%%%%%%%%%%%%%%%%%%%%%%%%%%%%%%%%%%%
%%%%%%%%%%%%%%%%%%%%%%%%%%%%%%%%%%%%%%%%%%%%%%%%%%%%%%%%%%%%%%%%
\begin{section}{$\s$-Extensions}
This section deals with the notion of $\s$-algebraicity in the context of $\s$-extensions. Since it is more convenient to use the language of difference rings, we begin by reviewing basic facts about difference rings. For a detailed account of the theory of difference rings, we refer the reader to the book \cite{levin2008difference}.  

	By a difference ring, we mean a pair $(R,\s_R)$ where  $R$ is a commutative ring  and  $\s_R:R\to R$ is an injective endomorphism. The ring $R$ is called the underlying ring of the difference ring $(R,\s)$ and the endomorphism $\s_R$ is called its structure map. A difference ring is also called a $\s$-ring, a term we mainly use in this paper. By abuse of notation, we often write $\s$ in place of $\s_R$ if there is no risk of confusion. Note that any ring equipped with its identity map gives rise to a $\s$-ring which we call a trivial $\s$-ring. In what follows, we present some definitions and facts regarding difference rings which are relevant to this work. 
	\\
	(1) A $\s$-ring  is called a $\s$-field (resp., $\s$-domain) if its underlying ring is a field (resp., an integral domain). \\
	(2) By a $\s$-ideal of a $\s$-ring $(R,\s)$, we mean an ideal $I$ of $R$ which satisfies 
	$$\forall r\in R,\, r\in I \iff \s(r)\subset I.$$   
	For any $\s$-ideal $I$ of a $\s$-ring $(R,\s)$, we can consider a $\s$-ring structure on $R/I$ defined by $r+I\mapsto \s(r)+I$, which is called the quotient of $(R,\s)$ modulo $I$, and denoted by  $(R/I,\s)$.  \\
	(3) A $\s$-homomorphism $\phi:(R,\s_R)\to (S,\s_S)$ between $\s$-rings is a ring homomorphism of $\phi:R\to S$ between the underlying rings which satisfies $\phi\s_R=\s_S\phi$. It is easy to see that $\s$-rings and $\s$-homomorphisms form a category. If $I$ is a $\s$-ideal of $(R,\s)$, then the quotient map  $q:R\to R/I$ gives rise to a $\s$-homomorphism $q:(R,\s)\to (R/I,\s)$. If $\phi:(R,\s_R)\to (S,\s_S)$ is a $\s$-homomorphism, then $\ker \phi$ is a $\s$-ideal of $(R,\s)$, and the map $R/\ker \phi\to S$ is an (injective)  $\s$-homomorphism. \\
	(4) By a $\s$-subring of a $\s$-ring $(R,\s)$, we mean a subring $S$  of $R$ which satisfies $\s(S)\subset S$, in which case, the inclusion map becomes a $\s$-homomorphism $i:(S,\s)\to (R,\s)$. In particular, if both $S$ and $R$ are fields, $(S,\s)$ is called a $\s$-subfield of $(R,\s)$, and $(R,\s)$ is called an \textit{extension $\s$-field} (or simply \textit{$\s$-extension})  of $(S,\s)$. \\
	(5) Let $(S,\s)$ be a $\s$-subring of a $\s$-ring $(R,\s)$. For a subset $X$ of  $(R,\s)$, the intersection of all $\s$-subrings of $(R,\s)$ containing $S\cup X$, is itself a $\s$-subring of $(R,\s)$ which is called the $\s$-subring of $(R,\s)$ generated by $X$ over $(S,\s)$, and denoted by $S[X]_{\s}$. In case, $X=\{x_1,...,x_n\}$ is finite, we denote  $S[X]_{\s}$ by $S[x_1,...,x_n]_\s$. \\
	(6) Let $(K,\s)$ be a $\s$-subfield of a $\s$-field $(L,\s)$. For a subset $X$ of  $(L,\s)$, the intersection of all $\s$-subfields of $(L,\s)$ containing $K\cup X$ is itself a $\s$-field of $(L,\s)$ which is called the $\s$-subfield of $(L,\s)$ generated by $X$ over $(K,\s)$, and denoted by $K\langle X \rangle$. In case, $X=\{x_1,...,x_n\}$ is finite, we denote  $K\langle X \rangle$ by $K\langle x_1,...,x_n \rangle$. \\
	(7) A $\s$-ring $(R,\s)$ is called inversive if $\s:R\to R$ is onto. It is known that for any $\s$-ring $(R,\s)$, there exists a $\s$-ring $(U,\s)$ containing $(R,\s)$ as a $\s$-subring such that any $\s$-homomorphism $\psi:(R,\s)\to (S,\s_S)$, where $(S,\s_S)$ is inversive, has a unique extension to a $\s$-homomorphism $(U,\s)\to (S,\s_S)$. The $\s$-ring $(U,\s)$, called a universal inversive closure of $(R,\s)$, is unique up to $\s$-isomorphism.  Moreover, $(U,\s)$ has the following property:
	$$\forall u\in U,\, \exists n\geq 1, \, \s^n(u)\in R.$$
	Also, $(U,\s)$ is a $\s$-field (resp., $\s$-domain) if $(R,\s)$ is  a $\s$-field (resp., $\s$-domain). \\
	(8) A multiplicative $\s$-subset of a $\s$-ring $(R,\s)$ is a multiplicative subset $S$ of $R$ which satisfies  
	$$\forall r\in R,\, \left( r\in S \implies \s(r)\subset S\right) .$$
	Given a multiplicative   $\s$-subset of a $\s$-domain $(R,\s)$, the map $$rs^{-1}\mapsto \s(r)\s(s)^{-1},$$ is a well-defined map on the localization ring $S^{-1}R$, turning $S^{-1}R$ into a $\s$-domain. Moreover, the ring homomorphism  
	$$R\to S^{-1}R,\, r\mapsto r1^{-1},$$
	is a $\s$-homomorphism. In particular, if $(R,\s)$ is a $\s$-domain, then  the quotient field $Frac(R)$ of $R$ acquires a $\s$-field structure for which the inclusion map $R\to Frac(R)$ is a $\s$-homomorphism. The resulting $\s$-field $(Frac(R),\s)$ is called the $\s$-field of fractions of $(R,\s)$ . \\
	(9) Let $(R,\s)$ be a $\s$-field. Consider the ring 
	$$R\{x\}:=R[x(0),x(1),x(2),...],$$ 
	of polynomials over $R$ in (commuting) indeterminates $x(0),x(1),...$. There is a unique endomorphism 
	$\s:R\{x\}\to R\{x\}$ which extends $\s:R\to R$ and satisfies $\s(x(i))=x(i+1)$ for $i\geq 0$. Clearly, $\s:R\{x\}\to R\{x\}$  is injective. Therefore, the pair $(R\{x\},\s\})$ defines a $\s$-ring which includes $(R,\s)$ as a $\s$-subring. The $\s$-ring  $(R\{x\},\s\})$ is called  
	the ring of difference polynomials  in the difference indeterminate $x$ over $(R,\s)$. It has the following familiar property: For any $\s$-homomorphism $\phi:(R,\s)\to (S,\s_S)$ and any element $s\in S$, there exists a unique $\s$-homomorphism $\phi:(R\{x\},\s\})\to (S,\s_S)$ which extends $\phi:(R,\s)\to (S,\s_S)$  and sends $x(0)$ to $s$.  

%%%%%%%%%%%%%%%%%%%%%%%%%%%%%%%%%%%%%%%%%%%%%%%%%%%%%%%%%%%%%%%%% 
\begin{subsection}{$\s$-Algebraic elements in $\s$-extensions } 	
	In this subsection, we let $(L,\sigma)$ be an extension $\sigma$-field  of a $\sigma$-field $(K,\sigma)$. An element $a\in L$ is called \textit{$\s$-algebraic over $K$} (or \textit{algebraic over $(K,\sigma)$}) if  $a$ is a root of a polynomial $P(T)\in K[T;\sigma]$. The following lemma gives a criterion for an element $a\in L$ to be algebraic over $(K,\sigma)$.
	\begin{lemma}\label{(L)Criterionalgebraic}
		For any element $a\in L$, the following are equivalent:\\
		(1) $a$ is $\s$-algebraic over $K$.\\
		(2) The $K$-vector subspace of $L$ generated by the elements $N_i(a)$, $i\geq 0$, is finite-dimensional. \\
		(3) There exists a finite-dimensional $K$-vector subspace $V$ of $L$ such that $1\in V$ and $a\s(V)\subset V$.   
	\end{lemma}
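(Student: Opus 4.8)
The plan is to treat condition (1) as a statement about $K$-linear dependence and then exploit the recursion defining the maps $N_i$. The central observation is that, since $L$ is commutative, the defining relation $N_{i+1}(a)=\s(N_i(a))\,a$ can be rewritten as $N_{i+1}(a)=a\,\s(N_i(a))$, so that all the $N_i(a)$ are produced by iterating the single additive map $\phi:L\to L$, $\phi(x)=a\,\s(x)$, starting from $N_0(a)=1$; that is, $N_i(a)=\phi^i(1)$. This map is $\s$-semilinear over $K$, meaning $\phi(cx)=\s(c)\,\phi(x)$ for $c\in K$, which is exactly what is needed to push $K$-linear combinations through $\phi$. I would record at the outset that, because $P(a)=\sum_i a_iN_i(a)$, condition (1) says precisely that the family $\{N_i(a):i\ge 0\}$ is $K$-linearly dependent; in particular (2)$\Rightarrow$(1) is immediate, since a finite-dimensional span cannot contain an infinite linearly independent family.

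I would then prove the cycle (1)$\Rightarrow$(3)$\Rightarrow$(2)$\Rightarrow$(1). For (1)$\Rightarrow$(3): a nontrivial relation $\sum_{i=0}^m a_iN_i(a)=0$ with $a_m\neq 0$ lets me solve $N_m(a)=-a_m^{-1}\sum_{i<m}a_iN_i(a)$, so $N_m(a)$ lies in $W:=\mathrm{span}_K\{N_0(a),\dots,N_{m-1}(a)\}$. I claim $W$ is exactly the space required by (3). Clearly $1=N_0(a)\in W$ and $\dim_K W\le m<\infty$. For the invariance $a\,\s(W)\subseteq W$, note $a\,\s(W)=\phi(W)$; since $\phi$ is $\s$-semilinear it suffices to check $\phi$ on the spanning vectors, and $\phi(N_i(a))=N_{i+1}(a)$ lies in $W$ for $i\le m-2$ trivially and for $i=m-1$ by the relation just derived. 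Hence $V:=W$ witnesses (3). The step (3)$\Rightarrow$(2) is a straightforward induction: if $1\in V$ and $a\,\s(V)\subseteq V$, then $N_0(a)=1\in V$ and $N_{i+1}(a)=\phi(N_i(a))\in a\,\s(V)\subseteq V$, so every $N_i(a)\in V$ and the span of the $N_i(a)$ is contained in the finite-dimensional $V$. Together with (2)$\Rightarrow$(1) from the first paragraph, the loop closes.

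The only place where anything actually happens is (1)$\Rightarrow$(3): the content is that a single linear relation among the $N_i(a)$ forces the entire (a priori infinite) family into a fixed finite-dimensional space. The mechanism is the $\s$-semilinearity of $\phi$, which guarantees that the top relation regenerates under iteration rather than producing genuinely new directions; I expect this to be the main point to get right, whereas the other two implications are formal. I would also remark that $\s$ is applied only in the forward direction throughout, so no assumption that $\s$ is surjective or an automorphism is needed, consistent with $(K,\s)$ being merely a $\s$-field in the sense of this section.
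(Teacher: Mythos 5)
Your proof is correct and uses essentially the same mechanism as the paper: the key step in both is that applying $\s$ to a linear relation among the $N_i(a)$ and multiplying by $a$ sends $N_i(a)$ to $N_{i+1}(a)$, so the relation propagates and traps all the $N_i(a)$ in a fixed finite-dimensional space. Your packaging of this via the $\s$-semilinear map $\phi(x)=a\s(x)$ and the cycle $(1)\Rightarrow(3)\Rightarrow(2)\Rightarrow(1)$ (versus the paper's $(1)\Rightarrow(2)\Rightarrow(3)\Rightarrow(1)$) is only a cosmetic reorganization of the same argument.
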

	\begin{proof}
		(1)$\implies$(2): Let $a\in L$ be $\s$-algebraic over $K$. Then, we have
		$$c_0+c_1N_1(a)+\cdots+c_{n}N_{n}(a)=0,$$
		for some $c_0,...,c_n\in K$ with $c_n\neq 0$. Applying $\s$ to this equation and then multiplying  with $a$, we obtain
		$$\sigma(c_0)a+\sigma(c_1)a\s(N_1(a))+\cdots+\s(c_{n})a\s(N_{n}(a))=0.$$
		Since $a\s(N_i(a))=N_{i+1}(a)$, this equation can be rewritten as
		$$\sigma(c_0)N_1(a)+\sigma(c_1)N_2(a)+\cdots+\s(c_{n})N_{n+1}(a)=0,$$
		proving that $N_{n+1}(a)$ is a $K$-linear combination of the elements
		$$N_0(a),N_1(a),\dots,N_{n}(a).$$
		Inductively, one can see that this holds for all $N_i(a)$, where $i\geq n+1$. In particular, the $K$-vector subspace of $L$ generated by the elements $N_i(a)$, $i\geq 0$, is finite-dimensional.\\
		(2)$\implies$(3): We show that the finite-dimensional $K$-vector subspace $V$ of $L$ generated by $N_i(a)$, $i\geq 0$, satisfies the desired conditions. Clearly, $1=N_0(a)\in V$. Using the formula $N_{i+1}(a)=a\s(N_i(a))$, one can easily see that  $a\s(V)\in V$, and we are done.\\
		(3)$\implies$(1): Let  $V$ be a finite-dimensional $K$-vector subspace  of $L$ which contains $1$ and satisfies $a\s(V)\subset V$. Let $n$ be the dimension of $V$. Since $1\in V$ and  $a\s(V)\subset V$, the identity $N_{i+1}(a)=a\s(N_i(a))$ implies that  $N_i(a)\in V$ for all $i\geq 0$. In particular, $N_0(a),...,N_{n}(a)$ are linearly dependent over $K$, that is,  $a$ is $\s$-algebraic over $K$. 
	\end{proof}	
Now, we give some applications of this lemma. 
	\begin{proposition}
		Let $b\in L$. There is $x\in L^*$ such that $\s(x)bx^{-1}$ is algebraic over $(K,\s)$ iff there exists a nonzero finite-dimensional $K$-vector subspace $W$ of $L$ such that  $b\s(W)\subset W$.
	\end{proposition}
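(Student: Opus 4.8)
The plan is to translate both sides of the claimed equivalence into a single finite-dimensionality statement, using the conjugacy identity \ref{(Id)Niofconjugate} together with the criterion of Lemma \ref{(L)Criterionalgebraic}. Fix $x\in L^*$ and set $a=\s(x)bx^{-1}$. Applying identity \ref{(Id)Niofconjugate} in the $\s$-field $(L,\s)$ gives $N_i(a)=\s^i(x)N_i(b)x^{-1}$ for every $i\ge 0$, so, since $L$ is commutative,
$$N_i(a)\,x=N_i(b)\,\s^i(x),\qquad i\ge 0.$$
I would observe that the right-hand side is exactly the $i$-th iterate $\phi^i(x)$ of the additive, $\s$-semilinear map $\phi:L\to L$, $\phi(w)=b\s(w)$; one verifies $\phi^i(w)=N_i(b)\s^i(w)$ by a short induction using commutativity. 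The key bookkeeping identity is therefore $\phi^i(x)=N_i(a)\,x$.

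From this I would first record a dimension comparison. Since right multiplication by the fixed element $x$ is a $K$-linear bijection of $L$, the subspace $W_x:=\mathrm{span}_K\{\phi^i(x):i\ge 0\}$ equals $\big(\mathrm{span}_K\{N_i(a):i\ge 0\}\big)x$; in particular $W_x$ is finite-dimensional if and only if $\mathrm{span}_K\{N_i(a):i\ge 0\}$ is. By the equivalence (1)$\Leftrightarrow$(2) of Lemma \ref{(L)Criterionalgebraic}, the latter holds precisely when $a=\s(x)bx^{-1}$ is algebraic over $(K,\s)$.

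For the forward implication, assume $a=\s(x)bx^{-1}$ is algebraic for some $x\in L^*$. Then $W_x$ is finite-dimensional by the previous paragraph, and it is nonzero since $x=\phi^0(x)\in W_x$. It remains to check $b\s(W_x)\subset W_x$, i.e.\ $\phi(W_x)\subset W_x$: as $\phi$ is $\s$-semilinear and $\s(K)\subset K$, we have $\phi(\mathrm{span}_K S)\subset \mathrm{span}_K\phi(S)$, and $\phi$ sends each generator $\phi^i(x)$ to $\phi^{i+1}(x)\in W_x$; hence $W_x$ is the required subspace. Conversely, given a nonzero finite-dimensional $K$-subspace $W$ with $b\s(W)\subset W$, I would pick $0\ne x\in W$; since $\phi(W)\subset W$ and $x\in W$, all iterates $\phi^i(x)$ lie in $W$, so $W_x\subset W$ is finite-dimensional, and the dimension comparison then forces $\s(x)bx^{-1}$ to be algebraic.

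The only genuinely delicate point is that $\phi$ is merely $\s$-semilinear rather than $K$-linear, so the translation "$b\s(W)\subset W\iff \phi(W)\subset W$" and the inclusion $\phi(\mathrm{span}_KS)\subset\mathrm{span}_K\phi(S)$ must be justified via $\s(K)\subset K$; the commutativity of $L$ is likewise used to move $x$ past $N_i(b)$ in the key identity. Everything else reduces to a direct application of identity \ref{(Id)Niofconjugate} and Lemma \ref{(L)Criterionalgebraic}.
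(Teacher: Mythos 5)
Your proof is correct. It hinges on the same pivot as the paper's argument, namely Lemma \ref{(L)Criterionalgebraic} together with the observation that conjugating $b$ by $x$ corresponds to multiplying the relevant invariant subspace by $x$, but you route through the equivalence (1)$\iff$(2) of that lemma and the explicit identity $N_i(\s(x)bx^{-1})=\s^i(x)N_i(b)x^{-1}$, packaged as $\phi^i(x)=N_i(a)\,x$ for the semilinear map $\phi(w)=b\s(w)$. The paper instead uses the equivalence (1)$\iff$(3): in the forward direction it takes an arbitrary finite-dimensional $V$ with $1\in V$ and $a\s(V)\subset V$ and checks in one line that $W=xV$ satisfies $b\s(xV)\subset xV$; in the converse it sets $V=x^{-1}W$ for a nonzero $x\in W$ and verifies $1\in V$ and $a\s(V)\subset V$ directly. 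Your canonical orbit space $W_x$ is exactly $x$ times the span of the $N_i(a)$, i.e.\ the particular $xV$ the paper would obtain from the canonical witness of condition (3), so the two arguments produce the same subspaces in the end; the paper's version is shorter because it never needs Identity \ref{(Id)Niofconjugate} or the induction $\phi^i(w)=N_i(b)\s^i(w)$, while yours makes the orbit structure and the dimension count explicit. The semilinearity caveats you flag ($\s(K)\subset K$ and the commutativity of $L$) are exactly the right ones and are handled correctly.
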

	\begin{proof}
		Let $a=\s(x)bx^{-1}$ be algebraic over $(K,\s)$ for some $0\neq x\in L$. By Lemma \ref{(L)Criterionalgebraic}, there exists a finite-dimensional $K$-vector subspace $V$ of $L$ such that $1\in V$ and $a\s(V)\subset V$. We have
		$$\s(x)bx^{-1}\s(V)\subset V\implies b\s(xV)\subset xV.$$
		Therefore, the finite-dimensional $K$-vector space $W=xV$ satisfies the desired condition. 
		
		Conversely, let $b\s(W)\subset W$ for some finite-dimensional $K$-vector subspace $W$ of $L$.  Since $W$ is nonzero, $W$ contains a nonzero element $x$. Setting $V=x^{-1}W$, we have
		$$(\s(x)bx^{-1})V=(\s(x)bx^{-1})\s(x^{-1}W)=x^{-1}b\s(W)\subset x^{-1}W=V.$$
		Since $1\in V=x^{-1}W,$ Lemma \ref{(L)Criterionalgebraic} implies that $\s(x)bx^{-1}$ is algebraic over $(K,\s)$  and we are done. 
	\end{proof}
	\begin{proposition}
	If $a,b\in L$ are $\s$-algebraic over $K$, then $\s(a)$ and $ab$ are $\s$-algebraic over $K$ too.	
	\end{proposition}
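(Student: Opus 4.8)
The plan is to work entirely through Lemma~\ref{(L)Criterionalgebraic}, which recasts $\s$-algebraicity as a finite-dimensionality condition; this mirrors the classical proof that the algebraic elements of a field extension form a subfield, where one replaces a single algebraic element by the finite-dimensional module it generates. I treat the two assertions separately, using criterion (2) of the lemma for $\s(a)$ and criterion (3) for $ab$.

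For $\s(a)$, the first step is to record the identity $N_i(\s(a))=\s(N_i(a))$ for all $i\geq 0$, which is immediate from $N_i(c)=\s^{i-1}(c)\cdots\s(c)\,c$ by applying $\s$ to each factor. Since $a$ is $\s$-algebraic, criterion (2) of Lemma~\ref{(L)Criterionalgebraic} tells us the $N_i(a)$ lie in a finite-dimensional $K$-subspace; choosing among them a spanning family $N_{i_1}(a),\dots,N_{i_m}(a)$, we may write each $N_i(a)=\sum_k c_{ik}N_{i_k}(a)$ with $c_{ik}\in K$. Applying $\s$ and using the identity above gives $N_i(\s(a))=\sum_k \s(c_{ik})N_{i_k}(\s(a))$. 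Because $\s(K)\subseteq K$, the coefficients $\s(c_{ik})$ again lie in $K$, so every $N_i(\s(a))$ lies in the finite-dimensional $K$-span of $N_{i_1}(\s(a)),\dots,N_{i_m}(\s(a))$; criterion (2) then yields that $\s(a)$ is $\s$-algebraic over $K$.

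For $ab$, I would invoke criterion (3): choose finite-dimensional $K$-subspaces $V,W\subseteq L$ with $1\in V$, $1\in W$, $a\s(V)\subseteq V$ and $b\s(W)\subseteq W$. Let $U=VW$ be the $K$-span of all products $vw$ with $v\in V$, $w\in W$; it is finite-dimensional (spanned by the products of basis elements) and contains $1=1\cdot1$. The key point is that $L$ is commutative, so for $v\in V$, $w\in W$ one may regroup $(ab)\,\s(v)\s(w)=(a\s(v))(b\s(w))$, and since $a\s(v)\in V$ and $b\s(w)\in W$ this product lies in $U$. Extending $K$-linearly (and using $\s(K)\subseteq K$ to absorb scalars) gives $(ab)\s(U)\subseteq U$, so criterion (3) shows that $ab$ is $\s$-algebraic over $K$.

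The only genuine subtlety, and the step I would watch most carefully, is that $\s(V)$ need not be a $K$-subspace of $L$: it is only a $\s(K)$-subspace, since $c\,\s(v)$ need not be of the form $\s(w)$ with $w\in V$ unless $c\in\s(K)$. This is exactly why, for $\s(a)$, I propagate the finite spanning relations through $\s$ and invoke $\s(K)\subseteq K$, rather than trying to use $\s(V)$ itself as a witnessing subspace. For $ab$, the analogous enabling fact is the commutativity of $L$, which is what permits the regrouping $(ab)\s(v)\s(w)=(a\s(v))(b\s(w))$; without it the operator $ab$ could not be split across the two factors of $U=VW$.
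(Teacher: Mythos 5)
Your proof is correct and follows essentially the same route as the paper: both arguments run entirely through Lemma~\ref{(L)Criterionalgebraic}, and your treatment of $ab$ via the subspace $VW$ is identical to the paper's. For $\s(a)$ the paper uses criterion (3) with the subspace $K\s(x_1)+\cdots+K\s(x_n)$ spanned by the $\s$-images of a basis of $V$, whereas you use criterion (2) together with the identity $N_i(\s(a))=\s(N_i(a))$; both hinge on exactly the point you flag, namely $\s(K)\subseteq K$, so this is only a cosmetic difference.
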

	\begin{proof}
		 By Lemma \ref{(L)Criterionalgebraic}, there exist finite-dimensional $K$-vector subspaces $V$ and $W$ of $L$ such that $1\in V\cap W$, $a\s(V)\subset V$ and  $a\s(W)\subset W$. It is easy to see that the set
		 $$ VW=\{xy|x\in V,\, y\in W\},$$
		is a finite-dimensional $K$-vector subspace of $L$. Clearly, we have $1\in VW$. Moreover, 
		$$(ab)\s(VW)=(a\s(V))(b\s(W))\subset VW.$$
		It follows from Lemma \ref{(L)Criterionalgebraic} that  $ab$ is $\s$-algebraic over $K$. To prove that $\s(a)$ is $\s$-algebraic over $K$, let $x_1=1,x_2,...,x_n$ be a basis for the vector space $V$. Consider the vector space
		$$V'=K\s(x_1)+K\s(x_2)+\cdots+K\s(x_n).$$
		Clearly, $V'$ is finite-dimensional over $K$ and $1\in V'$. Since $\s(V)\subset V'$, we see that
		$$\s(a)\s(V')=\s(aV')=\s(K(a\s(x_1))+\cdots+K(a\s(x_n)))\subset \s(V)\subset V'.$$
		By Lemma \ref{(L)Criterionalgebraic}, $a$ is $\s$-algebraic over $K$.
	\end{proof}
	\begin{proposition}
		Let $a\in L$ be $\s$-algebraic over $K$ and $V$ be the $K$-vector subspace  of $L$ generated by $N_i(a)$, where $i\geq 0$. Then any $b\in L$ which is $\s$-conjugate to $a$ over $V$ is  $\s$-algebraic over $K$.
	\end{proposition}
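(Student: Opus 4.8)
The plan is to verify condition (3) of Lemma \ref{(L)Criterionalgebraic} for the element $b$. By hypothesis $b$ is $\s$-conjugate to $a$ over $V$, so there is some $x\in V^*$ with $b=\s(x)\,a\,x^{-1}$. The first thing to record is that $V$ carries exactly the structure used in the proof of Lemma \ref{(L)Criterionalgebraic}: since $V$ is generated by the $N_i(a)$, the identity $N_{i+1}(a)=a\,\s(N_i(a))$ shows that $a\,\s(V)\subset V$, and $1=N_0(a)\in V$. These two facts about $V$ are what make the conjugation by an element of $V$ tractable.

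The candidate finite-dimensional subspace I would take is $W:=V\,x^{-1}$. Because right multiplication by $x^{-1}$ is a $K$-linear automorphism of $L$, the space $W$ is finite-dimensional of the same dimension as $V$, and $1=x\,x^{-1}\in W$ since $x\in V$. It then remains to check the invariance $b\,\s(W)\subset W$. Working in the commutative field $L$, one computes $b\,\s(V x^{-1})=\s(x)\,a\,x^{-1}\s(V)\s(x)^{-1}=a\,x^{-1}\s(V)=x^{-1}\big(a\,\s(V)\big)\subset x^{-1}V=W$, where the rearrangements use only commutativity and the cancellation $\s(x)\s(x)^{-1}=1$. Condition (3) of Lemma \ref{(L)Criterionalgebraic} then yields that $b$ is $\s$-algebraic over $K$.

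A parallel and perhaps more transparent route is to compute directly with the norm maps: Identity \ref{(Id)Niofconjugate} gives $N_i(b)=\s^i(x)\,N_i(a)\,x^{-1}$ for all $i\geq 0$. Setting $w_i:=\s^i(x)\,N_i(a)$, the recursion $N_{i+1}(a)=a\,\s(N_i(a))$ together with commutativity yields $w_{i+1}=a\,\s(w_i)$ with $w_0=x$. An immediate induction using $x\in V$ and $a\,\s(V)\subset V$ shows $w_i\in V$ for every $i$, so the $K$-subspace generated by the $N_i(b)=w_i\,x^{-1}$ is contained in $V x^{-1}$ and is therefore finite-dimensional; condition (2) of Lemma \ref{(L)Criterionalgebraic} again gives the conclusion.

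I do not expect a serious obstacle here: the entire content is the observation that the subspace $V$ attached to $a$ is stable under the operator $v\mapsto a\,\s(v)$, and that $\s$-conjugation by an element $x$ of $V$ transports this stability to $b$. The one point deserving care is the bookkeeping in the commutative manipulation $b\,\s(V x^{-1})\subset V x^{-1}$ (equivalently, the induction $w_i\in V$), where it is essential that the conjugating element $x$ be drawn from $V$ itself and not from all of $L$ — this is precisely what the hypothesis ``$\s$-conjugate over $V$'' supplies, and it is the reason the statement is false for conjugation over arbitrary elements.
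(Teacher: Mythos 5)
Your proposal is correct and follows essentially the same route as the paper: the paper also takes the subspace $x^{-1}V$ (equal to your $Vx^{-1}$ by commutativity of $L$), notes $1\in x^{-1}V$ since $x\in V$, and verifies $b\,\s(x^{-1}V)\subset x^{-1}V$ by the same cancellation computation before invoking condition (3) of Lemma \ref{(L)Criterionalgebraic}. Your second route via $N_i(b)=\s^i(x)N_i(a)x^{-1}$ is just an explicit unwinding of the same invariance and adds nothing that needs separate checking.
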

	\begin{proof}
		By Lemma \ref{(L)Criterionalgebraic}, $V$ is finite-dimensional over $K$, and $a\s(V)\subset V$. Let $b\in L$ be $\s$-conjugate to $a$ over $V$. Then, there exists $x\in V^*$ such that 
		$b=\s(x)ax^{-1}$. Consider the vector space
		$$x^{-1}V=\{x^{-1}y\,|\, y\in V\}.$$
		Clearly, $V$ is a finite-dimensional  vector space over $K$. Since, $x\in V$, we have $1\in V$. Moreover, we have 
		$$b\s(x^{-1}V)=\s(x)ax^{-1}\s(x^{-1}V)=x^{-1}a\s(V)\subset x^{-1}V.$$
		By Lemma \ref{(L)Criterionalgebraic}, $b$ must be $\s$-algebraic over $K$.
	\end{proof}
	The sum of two $\s$-algebraic elements need not be  $\s$-algebraic, as the following example shows. 
	\begin{example}
		Consider the $\s$-field $(\mathbb{Q}(x),\s)$, where $\s$ is the unique automorphism of $\mathbb{Q}(x)$ determined by $\s(x)=x^{-1}$. Considering the $\s$-field $(\mathbb{Q}(x),\s)$ as an $\s$-extension  of the trivial $\s$-field $(\mathbb{Q},1_\mathbb{Q})$, we see that $x$ is $\s$-algebraic over $\mathbb{Q}$ because it is a root of the polynomial $T^2-1\in \mathbb{Q}[T;1_\mathbb{Q}]$. It is easy to show that
		$$N_i(x+1)=\frac{(x+1)^i}{x^{[\frac{i}{2}]}},$$
		for all $i\geq 0$. In particular, the elements  $N_i(x+1)$, $i\geq 0$, generate an infinite-dimensional $\mathbb{Q}$-vector subspace of $\mathbb{Q}(x)$. Using Lemma  \ref{(L)Criterionalgebraic}, 
		 we conclude that,  the element $x+1$, which is the sum of the $\s$-algebraic elements $x$ and $1$, is not $\s$-algebraic over $(\mathbb{Q},1_\mathbb{Q})$. 		  
	\end{example}	
	
	Using Lemma \ref{(L)ProductPQ(a)} and the fact that $K[T;\sigma]$ is a left principal ideal domain, we see that  if $a\in L$ is algebraic over $(K,\s)$, then there exists a unique  monic skew polynomial $P_a(T)\in K[T;\sigma]$ such that
	$$\{Q(T)\in K[T;\sigma]\,|\, Q(a)=0 \}=K[T;\sigma]P_a(T).$$
	The skew polynomial $P_a(T)$ is called the \textit{minimal (skew) polynomial} of $a$ over $(K,\sigma)$. 
	In general, minimal polynomials need not be irreducible. For example, the minimal polynomial of $i\in\mathbb{C}$ in the $\s$-extension  $(\mathbb{C},\,\bar\, \,)$ of the  $\s$-field $(\mathbb{R},1_\mathbb{R})$ is $T^2 -1$ which is not irreducible in $\mathbb{R}[T;1_\mathbb{R}]$. However, we can show that every element has a $\s$-conjugate whose minimal polynomial is irreducible. Here are the details:  
	As before, let $(L,\s)$ be an extension $\sigma$-field  of $(K,\sigma)$ and $a\in L$ be $\s$-algebraic over $K$. Assume that the minimal polynomial $P(T)=P_a(T)$ of $a$ over $(K,\s)$ is reducible. It follows that there exists $Q(T),R(T)\in K[T;\sigma]$ of degree $>0$ such that 
	$$P(T)=Q(T)R(T).$$
	Since $R(a)\neq 0$, Lemma \ref{(L)ProductPQ(a)} implies that $Q(a_1)=0$ where 
	$$a_1=\sigma(R(a))\, a\, R(a)^{-1}\in K\langle a \rangle.$$ 
	Here, $K\langle a \rangle$ is the $\s$-subfield of $L$ which is generated by $K$ and $a$. In particular, $a_1$ is $\sigma$-conjugate to $a$ over $K\langle a \rangle$, and $a_1$ is $\s$-algebraic over $K$. Let $P_1(T)\in K[T;\sigma]$  be the minimal  skew polynomial of $a_1$ over $K$. Note that $\deg P_1<\deg P$ and $Q(T)=Q_1(T)P_1(T)$ for some $Q_1(T)\in K[T;\sigma]$. Continuing this process, we obtain a sequence of elements $a_1,a_2,...$ such that $a_{i+1}\in K\langle a_i \rangle$ is $\s$-conjugate to $a_i$ over $K\langle a_i \rangle$, and  $a_{i+1}$ is $\s$-algebraic over $K$ with minimal skew polynomial $P_{i+1}(T)\in  K[T;\sigma]$. Furthermore, 
	$$\deg P>\deg P_1> \deg P_2>...$$
	Therefore, after finitely many steps, we obtain an element $a_i$ whose minimal skew polynomial is irreducible over $(K,\s)$. Thus we have proved the following   
	\begin{proposition}
		Let $a\in (L,\s)$ be $\s$-algebraic over $(K,\s)$. Then, there exists $b\in  K\langle a\rangle$ such that (1) $b$ is $\sigma$-conjugate to $a$ over $K\langle a \rangle$, (2) $b$ is $\s$-algebraic over $(K,\s)$ and the minimal skew polynomial of $b$ over $K$ is irreducible, and (3) $P_a(T)=Q(T)P_b(T)R(T)$ for some $Q(T),R(T)\in K[T;\sigma]$.
	\end{proposition}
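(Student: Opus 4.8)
The plan is to prove the statement by strong induction on the degree of the minimal polynomial $P_a(T)$ of $a$ over $(K,\s)$, at each stage passing from $a$ to a $\s$-conjugate whose minimal polynomial is a proper right factor of the previous one. The base case is immediate: if $P_a(T)$ is already irreducible over $(K,\s)$, then $b=a$ satisfies (1)--(3), taking $1$ as the conjugating element and writing $P_a(T)=1\cdot P_a(T)\cdot 1$.

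For the inductive step, assume $P_a(T)$ is reducible and write $P_a(T)=Q(T)R(T)$ with $\deg Q,\deg R>0$. Because $\deg R<\deg P_a$ and $R(T)\neq 0$, minimality of $P_a(T)$ gives $R(a)\neq 0$, so applying the product formula (Lemma \ref{(L)ProductPQ(a)}) to $(QR)(a)=P_a(a)=0$ yields $Q(a_1)=0$ for $a_1=\s(R(a))\,a\,R(a)^{-1}$. I would then record three facts. First, $R(a)\in K\langle a\rangle$, so $a_1\in K\langle a\rangle$ and $a_1$ is $\s$-conjugate to $a$ over $K\langle a\rangle$. Second, $a_1$ is $\s$-algebraic over $K$, and since the polynomials vanishing at $a_1$ form the left ideal $K[T;\s]P_{a_1}(T)$, the relation $Q(a_1)=0$ gives $Q(T)=Q_1(T)P_{a_1}(T)$ for some $Q_1(T)$, whence $\deg P_{a_1}\le\deg Q<\deg P_a$. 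Third, substituting this into the factorization gives $P_a(T)=Q_1(T)P_{a_1}(T)R(T)$, which already has the form demanded in (3) with $a_1$ in the role of $b$.

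Now I would apply the induction hypothesis to $a_1$, whose minimal polynomial has strictly smaller degree. This produces $b\in K\langle a_1\rangle$ that is $\s$-conjugate to $a_1$ over $K\langle a_1\rangle$, has irreducible minimal polynomial over $(K,\s)$, and satisfies $P_{a_1}(T)=Q'(T)P_b(T)R'(T)$. Two compositions then close the induction. For (1) and the membership $b\in K\langle a\rangle$, I write $a_1=\s(x)ax^{-1}$ with $x=R(a)\in K\langle a\rangle$ and $b=\s(y)a_1y^{-1}$ with $y\in K\langle a_1\rangle\subseteq K\langle a\rangle$; then $b=\s(yx)\,a\,(yx)^{-1}$ with $yx\in K\langle a\rangle$, so $b$ is $\s$-conjugate to $a$ over $K\langle a\rangle$ and lies in $K\langle a\rangle$. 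For (3), I substitute $P_{a_1}(T)=Q'(T)P_b(T)R'(T)$ into $P_a(T)=Q_1(T)P_{a_1}(T)R(T)$ to obtain $P_a(T)=Q_1(T)Q'(T)\,P_b(T)\,R'(T)R(T)$, again of the required shape.

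The degree bookkeeping $\deg P_a>\deg P_{a_1}>\cdots$ guarantees termination, and the inclusions $K\langle a_{i+1}\rangle\subseteq K\langle a_i\rangle$ are routine. The one point I expect to require genuine care is the transitivity of $\s$-conjugacy \emph{over} $K\langle a\rangle$: I must ensure that every conjugating element produced along the way lies in $K\langle a\rangle$ itself, not merely in $L$, so that products such as $yx$ remain inside $K\langle a\rangle$. This is exactly what the inclusion $K\langle a_1\rangle\subseteq K\langle a\rangle$ together with the identity $\s(y)\s(x)=\s(yx)$ provides; granting this, everything else follows from Lemma \ref{(L)ProductPQ(a)} and the left-principal-ideal structure of $K[T;\s]$.
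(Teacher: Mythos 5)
Your proof is correct and follows essentially the same route as the paper: factor $P_a=QR$, pass to the $\s$-conjugate $a_1=\s(R(a))\,a\,R(a)^{-1}$ via the product formula, and descend on the degree of the minimal polynomial. The only difference is presentational — you package the paper's iterative "continue this process" as a strong induction, which has the minor virtue of making the composition of the conjugating elements and of the factorization in (3) fully explicit.
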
	

	A $\s$-extension $(L,\s)$ of a $\s$-field $(K,\s)$ is called \textit{$\s$-algebraic} if, as a $\s$-field, $(L,\s)$ is generated by a family of its elements each of which is $\s$-algebraic over $K$. It is called \textit{$\s$-simple} if $L=K\langle a\rangle$, where $a$ is $\s$-algebraic over $K$. Following \cite{treur1989separate}, we say that two elements $a\in L_1$ and $b\in L_2$ in $\s$-extensions $(L_1,\s)$ and $(L_2,\s)$ of $(K,\s)$  have the same \textit{$(K,s)$-type }(or simply \textit{$K$-type}) if there is an isomorphism $K\langle a \rangle\to K\langle b \rangle$ of $\s$-fields over  $(K,\s)$ which sends $a$ to $b$. The proof of the following lemma is left to the reader. 
	\begin{lemma}\label{SameKtype}
		(1) Let $a,b$ have the same $K$-type. Then, $a$ is $\s$-algebraic over $K$ iff $b$ is $\s$-algebraic over $K$. \\
		(2) If $a$ and $b$ are $\s$-algebraic of the same $K$-type, then they have the same minimal polynomial over $(K,\s)$.
	\end{lemma}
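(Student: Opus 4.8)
The plan is to prove both parts by unwinding the definition of having the same $K$-type, namely the existence of a $\s$-field isomorphism $\phi:K\langle a\rangle\to K\langle b\rangle$ over $(K,\s)$ with $\phi(a)=b$. Since $\phi$ is a $\s$-homomorphism fixing $K$, it commutes with $\s$ and is $K$-linear, so it interacts cleanly with both the maps $N_i$ and the evaluation of skew polynomials. First I would record the key compatibility: for any $c\in K\langle a\rangle$ and any $i\ge 0$, we have $\phi(N_i(c))=N_i(\phi(c))$. This follows by induction from $N_{i+1}(c)=\s(N_i(c))\,c$ together with $\phi\s=\s\phi$ and the multiplicativity of $\phi$; it is the one computation I would actually carry out, and it is routine.

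For Part (1), I would argue via the criterion in Lemma \ref{(L)Criterionalgebraic}. Suppose $a$ is $\s$-algebraic over $K$; then the $K$-subspace $V\subset K\langle a\rangle$ spanned by the $N_i(a)$ is finite-dimensional. Applying $\phi$ and using $\phi(N_i(a))=N_i(\phi(a))=N_i(b)$, the image $\phi(V)$ is a $K$-subspace of $K\langle b\rangle$ spanned by the $N_i(b)$; since $\phi$ is a $K$-linear isomorphism, $\phi(V)$ is finite-dimensional as well, so $b$ is $\s$-algebraic over $K$ by the same lemma. The reverse implication follows by the symmetric argument applied to $\phi^{-1}$ (which is a $\s$-isomorphism over $K$ sending $b$ to $a$).

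For Part (2), assume $a$ and $b$ are both $\s$-algebraic and of the same $K$-type, and let $P_a(T)=\sum_{i=0}^n c_i T^i\in K[T;\s]$ be the minimal polynomial of $a$. Evaluating $P_a$ at $b$ and using the evaluation formula $P_a(b)=\sum_i c_i N_i(b)$, I would compute
$$
P_a(b)=\sum_{i=0}^n c_i N_i(b)=\sum_{i=0}^n c_i\,\phi(N_i(a))=\phi\Bigl(\sum_{i=0}^n c_i N_i(a)\Bigr)=\phi(P_a(a))=\phi(0)=0,
$$
where the third equality uses that $\phi$ is $K$-linear and fixes each $c_i\in K$. Hence $P_a(b)=0$, so $P_a\in K[T;\s]P_b(T)$, giving $\deg P_b\le \deg P_a$. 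The symmetric computation with $\phi^{-1}$ shows $P_b(a)=0$, so $\deg P_a\le\deg P_b$; as both polynomials are monic and each divides the other, they are equal, i.e. $P_a=P_b$. I do not expect any genuine obstacle here: the content is entirely in the observation that a $\s$-isomorphism over $K$ transports the $N_i$ and hence the evaluation of skew polynomials, and once that compatibility is in hand both statements are immediate. The only point deserving care is confirming that $\phi$ fixing $K$ pointwise and commuting with $\s$ is exactly what makes $\phi(P(c))=P(\phi(c))$ hold for all $P\in K[T;\s]$.
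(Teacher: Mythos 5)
Your proof is correct; the paper explicitly leaves this lemma to the reader, and your argument — transporting $N_i$ along the $\s$-isomorphism via $\phi(N_i(c))=N_i(\phi(c))$ and hence $\phi(P(c))=P(\phi(c))$ for $P\in K[T;\s]$ — is exactly the intended routine verification. The only remark worth making is that Part (1) follows even more directly from that same transport identity (a nonzero $P$ with $P(a)=0$ gives $P(b)=\phi(P(a))=0$), so the detour through Lemma \ref{(L)Criterionalgebraic} is sound but unnecessary.
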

	In contrast to ordinary field extensions, the converse of the second statement in this lemma does not hold in the case of $\s$-algebraic elements. More precisely, it may happen that two  $\s$-algebraic elements have the same minimal polynomial, but are not of the same type. As an example, the reader can verify that the element $i$ in  the $\s$-extension  $(\mathbb{C},\,\bar\, \,)$ of the trivial $\s$-field $(\mathbb{R},1_\mathbb{R})$, and the element $x$ in the  $\s$-extension  $(\mathbb{R}(x),\s)$, where $\s(x)=1/x$, of the trivial $\s$-field $(\mathbb{R},1_\mathbb{R})$ have the same minimal polynomial $T^2-1$ over $(\mathbb{R},1_\mathbb{R})$. However, $i$ and $x$ are not of the same $\mathbb{R}$-type. 

	We conclude this subsection with a general observation. Let $(L,\s)$ be a $\s$-extension of $(K,\s)$. For an arbitrary element $a\in L$, there is a unique $\s$-homomorphism $$(K\{x\},\s)\to K[a]_\s,$$ which acts as the identity map on $K$ and sends $x$ to $a$. Clearly, this map is onto. So, letting $J_a$ be the kernel of this map, we obtain a  $\s$-isomorphism
	$$\psi_a:(\frac{K\{x\}}{J_a},\s)\to K[a]_\s,$$
	which through localization gives rise to a $\s$-isomorphism of $\s$-fields: 
	$$\phi_a:(Frac(\frac{K\{x\}}{J_a}),\s)\to K\langle a \rangle.$$
	Keeping the above notations, we prove the following:
	\begin{proposition}
		Let $(L_1,\s_1)$ and $(L_2,\s_2)$ be $\s$-extensions of $(K,\s)$. Then, $a\in L_1$ and $b\in L_2$ are of the same $K$-type iff $J_a=J_b$.  
	\end{proposition}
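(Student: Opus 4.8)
The plan is to phrase both directions through the canonical evaluation $\s$-homomorphism and to let the universal property of $K\{x\}$ recorded in item (9) do the work. For $a\in L_1$ write $\varepsilon_a\colon(K\{x\},\s)\to(L_1,\s)$ for the unique $\s$-homomorphism that fixes $K$ pointwise and sends $x(0)$ to $a$; its image is $K[a]_\s$ and, by the very definition given just above the statement, $\ker\varepsilon_a=J_a$. Define $\varepsilon_b$ analogously for $b\in L_2$. The whole argument rests on the observation that if $\theta$ is an injective $\s$-homomorphism out of $(L_1,\s)$, then $\theta\circ\varepsilon_a$ has the same kernel as $\varepsilon_a$.

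For the forward implication, I would start from an isomorphism $\theta\colon K\langle a\rangle\to K\langle b\rangle$ of $\s$-fields over $(K,\s)$ with $\theta(a)=b$, which exists by the definition of having the same $K$-type. Then $\theta\circ\varepsilon_a\colon(K\{x\},\s)\to(L_2,\s)$ is a $\s$-homomorphism fixing $K$ pointwise and sending $x(0)$ to $\theta(a)=b$. By the uniqueness clause of the universal property of $K\{x\}$, this composite must coincide with $\varepsilon_b$. Since $\theta$ is injective, $\ker(\theta\circ\varepsilon_a)=\ker\varepsilon_a$, and therefore $J_a=\ker\varepsilon_a=\ker(\theta\circ\varepsilon_a)=\ker\varepsilon_b=J_b$.

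For the reverse implication, I would assume $J_a=J_b$ and use the two $\s$-isomorphisms $\phi_a\colon(Frac(K\{x\}/J_a),\s)\to K\langle a\rangle$ and $\phi_b\colon(Frac(K\{x\}/J_b),\s)\to K\langle b\rangle$ displayed before the statement. Because $J_a=J_b$, these share a common domain, so $\theta:=\phi_b\circ\phi_a^{-1}\colon K\langle a\rangle\to K\langle b\rangle$ is a well-defined $\s$-isomorphism. It fixes $K$ pointwise (both $\phi_a$ and $\phi_b$ restrict to the identity on $K$), and, writing $\bar x$ for the common class of $x(0)$ in $K\{x\}/J_a=K\{x\}/J_b$, it sends $a=\phi_a(\bar x)$ to $\phi_b(\bar x)=b$. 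Hence $\theta$ witnesses that $a$ and $b$ have the same $K$-type.

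The argument is essentially formal, so the only point that needs genuine care---and the step I would single out as the potential obstacle---is the identification $\ker\varepsilon_a=J_a$: one must note that since $K[a]_\s$ is a $\s$-subring of $K\langle a\rangle$, the kernel of $\varepsilon_a$ regarded as landing in $(L_1,\s)$ coincides with the kernel of the map $(K\{x\},\s)\to K[a]_\s$ used to define $J_a$. Everything else is the uniqueness half of the universal property together with the injectivity of the comparison isomorphisms, and no computation is required.
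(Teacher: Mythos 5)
Your proposal is correct and follows essentially the same route as the paper: the reverse direction via the composite $\phi_b\circ\phi_a^{-1}$ is identical, and your forward direction, carried out by comparing the evaluation maps $\varepsilon_a$ and $\theta\circ\varepsilon_a=\varepsilon_b$ on $K\{x\}$ via the uniqueness clause of the universal property, is just a slightly more explicit version of the paper's step showing that $\phi_b^{-1}\phi\phi_a$ sends $x+J_a$ to $x+J_b$. Your remark identifying $\ker\varepsilon_a$ with $J_a$ is a sound and worthwhile clarification, not a gap.
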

	\begin{proof}
		If $J_a=J_b$, then $\phi_b\phi_a^{-1}:K\langle a \rangle\to K\langle b \rangle$ is a $\s$-isomorphism which acts as the identity map on $K$ and sends $a$ to $b$. Therefore, $a$ and $b$ are of the same $K$-type. Conversely, let $a$ and $b$ have the same $K$-type. Then, there is a   $\s$-isomorphism $\phi:K\langle a \rangle\to K\langle b \rangle$  which acts as the identity map on $K$ and sends $a$ to $b$. Then, the $\s$-isomorphism 
		$$\phi_b^{-1}\phi \phi_a: (Frac(\frac{K\{x\}}{J_a}),\s)\to(Frac(\frac{K\{x\}}{J_b}),\s),$$
		acts as the identity map on $K$ and sends $x+J_a$ to $x+J_b$, from which it follows that $J_a=J_b$.
	\end{proof}
\end{subsection}
	
%%%%%%%%%%%%%%%%%%%%%%%%%%%%%%%%%%%%%%%%%%%%%%%%%%%%%%%%%%%%%%%%% 
%%%%%%%%%%%%%%%%%%%%%%%%%%%%%%%%%%%%%%%%%%%%%%%%%%%%%%%%%%%%%%%%%  

\begin{subsection}{Minimal polynomials} 
	
	In this part, we turn to the question of deciding when a given skew polynomial can be a minimal polynomial. In what follows, let $(K,\sigma)$ be a  $\sigma$-field, and 
	$$P(T)=T^{n+1}-c_{n}T^{n}-\cdots-c_1T-c_0\in K[T;\sigma],$$
	be a (monic) polynomial of degree $n+1>0$. Although, most of the results of this part hold if $P(T)\notin T\,K[T;\sigma]$, we assume, for simplicity, that $P(0)\neq 0$, that is, $c_0\neq 0$ (see Remark \ref{(R)GeneralP}).  
	
	Consider the ring $K[y_0,...,y_{n-1}]$  of polynomials over $K$ in $n$ (commuting) indeterminates, and  let $V_n$ be the set of all $Y\in K[y_0,...,y_{n-1}]$ of the form
	$$Y=d_0+d_{1}y_0+d_{2}y_0y_1+\cdots+d_{n-1}y_{0}y_{1}\cdots y_{n-2}+d_{n}y_{0}y_{1}\cdots y_{n-1},$$
	where $d_0,...,d_{n}\in K$. Clearly, $V_n$ is an $(n+1)$-dimensional $K$-vector subspace of  $K[y_0,...,y_{n-1}]$. First, we define a map $L:V_n\to V_n$ as follows: For
	$$Y=d_0+d_{1}y_0+\cdots+d_{n}y_{0}y_{1}\cdots y_{n-1}\in V_n,$$
	we set
	$$L(Y)=
	\sigma (d_{n}) c_0+(\sigma (d_{0})+\sigma (d_{n})c_{1})y_0+\cdots+(\sigma (d_{n-1})+\sigma (d_{n})c_{n})y_0y_{1}\cdots y_{n-1}.
	$$	
%	$$L(Y)=
%	\sigma (d_{n}) c_0+(\sigma (d_{0})+\sigma (d_{n})c_{1})y_0+(\sigma (d_{1})+\sigma (d_{n})c_{2})y_0y_1+\cdots+
%	$$
%	$$
%	(\sigma (d_{n-2})+\sigma (d_{n})c_{n-1})y_0y_{1}\cdots y_{n-2}+(\sigma (d_{n-1})+\sigma (d_{n})c_{n})y_0y_{1}\cdots y_{n-1}.
%	$$
	Next, we define a ring homomorphism $\phi:K[y_0,...,y_{n-1}]\to K(y_0,...,y_{n-1})$ as follows:
	By the universal property of $K[y_0,...,y_{n-1}]$, there exists a unique ring homomorphism
	$$\phi:K[y_0,...,y_{n-1}]\to K(y_0,...,y_{n-1}),$$
	such that $\phi(a)=\s(a)$, for all $a\in K$, and $$\phi(y_i)=\begin{cases}
		y_{i+1} & i=0,1,...,n-2,\\
		\frac{L(y_0y_1\cdots y_{n-1})}{y_0y_1\cdots y_{n-1}} & i=n-1.
	\end{cases}
	$$
	The following lemma gives some  properties of the maps $L$ and $\phi$. 
	\begin{lemma}\label{(L)ZerosofL}
		(1) For  $Y\in V_n$, $L(Y)=0$ iff $Y=0$. \\
		(2) The ring homomorphism $\phi$ is injective.\\
		(3) For all $Y\in V_n$ and $i>0$, $L^i(Y)=y_0\phi(y_0)\cdots \phi^{i-1}(y_{0})\phi^i(Y)$. 
	\end{lemma}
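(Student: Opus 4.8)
My plan is to treat the three assertions in increasing order of difficulty, isolating the only genuinely conceptual point, which is the injectivity of $\phi$ in part (2); the other two parts are direct computations in the distinguished basis $1,\,y_0,\,y_0y_1,\,\ldots,\,y_0y_1\cdots y_{n-1}$ of $V_n$.

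For part (1), I would simply read off the coordinates of $L(Y)$ in this basis. Writing $Y=\sum_{k=0}^{n} d_k\, y_0\cdots y_{k-1}$ and imposing $L(Y)=0$, the constant coordinate gives $\sigma(d_n)c_0=0$; since $c_0\neq 0$ and $\sigma$ is injective (being a field endomorphism), this forces $d_n=0$. The remaining coordinates then collapse to $\sigma(d_{k-1})=0$ for $k=1,\ldots,n$, whence $d_0=\cdots=d_{n-1}=0$ and $Y=0$. The converse is trivial, so (1) is nothing more than a triangular elimination whose only inputs are $c_0\neq 0$ and the injectivity of $\sigma$.

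For part (3), the crux is the base identity $L(Y)=y_0\,\phi(Y)$ for every $Y\in V_n$, which I would verify by expanding both sides in the same basis: using $\phi(y_0\cdots y_{n-1})=y_1\cdots y_{n-1}\,\phi(y_{n-1})=(c_0+c_1y_0+\cdots+c_n y_0\cdots y_{n-1})/y_0$ and multiplying through by $y_0$, one recovers exactly the coordinates defining $L(Y)$. Granting this, the general formula follows by induction on $i$: since $L$ maps $V_n$ into $V_n$, I may apply the base identity to $L^i(Y)\in V_n$ to get $L^{i+1}(Y)=y_0\,\phi\bigl(L^i(Y)\bigr)$, and then push $\phi$ through the product in the inductive hypothesis (using that $\phi$ is a ring homomorphism) to shift each factor $\phi^{j}(y_0)$ to $\phi^{j+1}(y_0)$ and $\phi^{i}(Y)$ to $\phi^{i+1}(Y)$, which is precisely the claimed expression.

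The main obstacle is part (2), and the idea I would use is a transcendence-degree count. Since $\phi|_K=\sigma$ is injective and $K[y_0,\ldots,y_{n-1}]$ is a polynomial ring, $\phi$ is injective if and only if the images $\phi(y_0)=y_1,\ldots,\phi(y_{n-2})=y_{n-1},\phi(y_{n-1})$ are algebraically independent over $\sigma(K)$ (one must here track that the coefficients of a vanishing relation lie in $\sigma(K)$ and pull them back through $\sigma^{-1}$). To establish this independence I would show that these $n$ elements already generate $K(y_0,\ldots,y_{n-1})$ over $K$: from $\phi(y_{n-1})\,y_0y_1\cdots y_{n-1}=c_0+c_1y_0+\cdots+c_n y_0\cdots y_{n-1}$ one collects the terms containing $y_0$ and solves $y_0=c_0\bigl(\phi(y_{n-1})\,y_1\cdots y_{n-1}-c_1-\cdots-c_n y_1\cdots y_{n-1}\bigr)^{-1}$, which is legitimate precisely because $c_0\neq 0$ guarantees the bracketed factor is nonzero. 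Hence $K\bigl(y_1,\ldots,y_{n-1},\phi(y_{n-1})\bigr)=K(y_0,\ldots,y_{n-1})$, a field of transcendence degree $n$ over $K$ generated by $n$ elements; such a generating set must be a transcendence basis, so these elements are algebraically independent over $K$, and a fortiori over $\sigma(K)\subseteq K$. This delivers the injectivity of $\phi$, and I expect the only delicate step to be the clean statement of the equivalence between injectivity and algebraic independence over $\sigma(K)$.
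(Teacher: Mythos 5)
Your proofs of (1) and (3) coincide with the paper's: (1) is the same triangular elimination in the coordinates of the basis $1,y_0,y_0y_1,\dots,y_0\cdots y_{n-1}$, using only $c_0\neq 0$ and the injectivity of $\sigma$, and (3) rests on the same base identity $L(Y)=y_0\phi(Y)$ followed by the same induction. For (2) you take a genuinely different route. The paper argues directly: a nonzero kernel element, written as a polynomial in $y_{n-1}$ over $K[y_0,\dots,y_{n-2}]$, is sent to $\sum_{i=0}^m g_i(y_1,\dots,y_{n-1})\bigl(L(y_0\cdots y_{n-1})/(y_0\cdots y_{n-1})\bigr)^i$, which is then expanded in powers of $y_0^{-1}$; the coefficient of the top power $y_0^{-m}$ is $(c_0y_1^{-1}\cdots y_{n-1}^{-1})^m g_m\neq 0$, a contradiction. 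Your argument --- solve the defining relation of $\phi(y_{n-1})$ for $y_0$ (legitimate since $c_0\neq 0$ forces the relevant factor to be nonzero) to conclude that $y_1,\dots,y_{n-1},\phi(y_{n-1})$ generate $K(y_0,\dots,y_{n-1})$ over $K$, hence form a transcendence basis, hence are algebraically independent over $K$ and a fortiori over $\sigma(K)$ --- is correct and arguably cleaner: it replaces the leading-coefficient bookkeeping with a standard dimension count, at the price of invoking the theory of transcendence bases; only the implication ``algebraically independent over $\sigma(K)$ implies $\phi$ injective'' is actually needed, and that direction is immediate from the injectivity of $\sigma$. One presentational caveat: you propose to treat (3) before (2), but for $i\geq 2$ the expression $\phi^i(Y)$ only makes sense after $\phi$ has been extended to (a localization of) $K(y_0,\dots,y_{n-1})$, which is precisely what the injectivity in (2) provides; the paper makes this dependence explicit at the start of its proof of (3), and you should keep (2) before (3) for the same reason.
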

	\begin{proof}
		(1) Clearly, $L(0)=0$. Assume that $L(Y)=0$ for some nonzero
		$$Y=d_0+d_{1}y_0+\cdots+d_{n}y_{0}y_{1}\cdots y_{n-1}\in V_n.$$
		If $d_n=0$, then $L(Y)=0$ implies that 
		$$\sigma(d_0)=\sigma(d_1)=\cdots=\sigma(d_{n-1})=0\implies Y=0.$$
		If $d_n\neq 0$,  then $\s(d_n)c_0=0$ implies that  $c_0=0$, a contradiction. This completes the proof of (1). \\
		(2) Suppose, on the contrary, that $\phi$ is not injective. Then, there exist ordinary polynomials 
		$$f_0(y_0,...,y_{n-2}),\dots, f_m(y_0,...,y_{n-2})\in K[y_0,...,y_{n-2}],$$
		such that $f_0f_m\neq 0$ and 
		$\phi(\sum_{i=0}^mf_i(y_0,...,y_{n-2})y_{n-1}^i)=0.$
		Computing the left-hand side using the definition of $\phi$, we obtain
		\begin{equation}\label{(E)kernelofphi}		
			\sum_{i=0}^mg_i(y_1,...,y_{n-1})\left( \frac{L(y_0y_1\cdots y_{n-1})}{y_0y_1\cdots y_{n-1}}\right) ^i=0,	
		\end{equation}
		where $g_0,...,g_m\in K[y_0,...,y_{n-2}],$ with $g_0g_m\neq 0$. Writing
		\begin{align*}
			\frac{L(y_0y_1\cdots y_{n-1})}{y_0y_1\cdots y_{n-1}} = & \frac{c_0+c_{1}y_0+\cdots+c_{n-1}y_{0}y_{1}\cdots y_{n-2}+c_{n}y_{0}y_{1}\cdots y_{n-1}}{y_0y_1\cdots y_{n-1}}\\
			=& c_0y_0^{-1}y_1^{-1}\cdots y_{n-1}^{-1}+c_1y_1^{-1}\cdots y_{n-1}^{-1}+\dots+c_{n-1}y_{n-1}^{-1}+c_n
		\end{align*}
		and expanding the left-hand side of Equation \ref{(E)kernelofphi} as a polynomial in $y_0^{-1}$, we see that 
		the highest coefficient of $y_0^{-1}$ in the left-hand side of the equation  is equal to
		$$(c_0y_1^{-1}\cdots y_{n-1}^{-1})^mg_m(y_1,...,y_{n-1})\neq 0,$$
		a contradiction. This completes the proof of (2).\\
		(3) We first note that the homomorphism $\phi$ can be extended to an endomorphism $\phi:K(y_0,...,y_{n-1})\to K(y_0,...,y_{n-1}),$ by Part (2). In particular, the formula given in (3) is well-defined. Given $$Y=d_0+d_{1}y_0+d_{2}y_0y_1+\cdots+d_{n}y_{0}y_{1}\cdots y_{n-1}\in V_n,$$
		we write
		\begin{align*}
			\phi\left(Y\right) = & \sigma (d_{0})+\sigma (d_{1})y_1+\sigma (d_{2})y_1y_2+\cdots+\sigma (d_{n})y_1y_2\cdots y_{n-1}y_n\\
			=& \sigma (d_{0})+\sigma (d_{1})y_1+\sigma (d_{2})y_1y_2+\cdots+\sigma (d_{n-1})y_1y_2\cdots y_{n-1}\\
			&+\sigma (d_{n})\frac{c_0+c_{1}y_0+\cdot+c_{n}y_{0}y_{1}\cdots y_{n-1}}{y_0}
		\end{align*}
		This gives 
		\begin{align*}
			y_0\phi\left(Y\right) = & \sigma (d_{0})y_0+\sigma (d_{1})y_0y_1+\cdots+\sigma (d_{n-1})y_0y_1y_2\cdots y_{n-1}\\
			& + \sigma (d_{n})\left( c_0+c_{1}y_0+c_{2}y_1y_2+c_{n}y_{0}y_{1}\cdots y_{n-1}\right)\\
			=& \sigma (d_{n}) c_0+(\sigma (d_{0})+\sigma (d_{n})c_{1})y_0+\cdots+(\sigma (d_{n-1})+\sigma (d_{n})c_{n})y_0y_{1}\cdots y_{n-1}\\
			=& L(Y)
		\end{align*}
		proving (3) for $i=1$. Now, a simple induction yields the desired formula. 
	
	\end{proof}
	  
	Let $S$ be the multiplicative subset of $K[y_0,...,y_{n-1}]$ which is generated by all the nonzero elements of $V_n$. It follows from  Lemma \ref{(L)ZerosofL} that $\phi$, through localization, gives rise to a ring homomorphism
 	$$\s_P:S^{-1}K[y_0,...,y_{n-1}]\to S^{-1}K[y_0,...,y_{n-1}], \,\s_P(rs^{-1})=\phi(r)\phi(s)^{-1},$$
 	where $r\in K[y_0,...,y_{n-1}]$ and $s\in S$. Using Part (2) of Lemma \ref{(L)ZerosofL}, we see that $\s_p$ is injective. Therefore, the pair
 	$$(K[P],\s_P):=(S^{-1}K[y_0,...,y_{n-1}],\s_P),$$
 	is a $\s$-domain. The $\s$-field of fractions of $(K[P],\s_P)$ is denoted by  $(K(P),\s_P)$. Note that  $(K(P),\s_P)$ is a $\s$-extension of $(K,\s)$. Now, we show that  $P(T)$ is the minimal polynomial of an element in $(K(P),\s_P)$.
	\begin{proposition}\label{(P)rootinK(P)}
		The element  $y_0\in K(P)$ is a nonzero root of $P(T)$ in the $\s$-extension  $(K(P),\s)$ of $(K,\s)$. Moreover, the minimal skew polynomial of  $y_0\in K(P)$ over $(K,\s)$ is $P(T)$.        
	\end{proposition}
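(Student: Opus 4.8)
The plan is to compute the value $P(y_0)$ directly from the evaluation formula $P(a)=\sum_{i}a_iN_i(a)$ of Subsection~\ref{(S)preliminaries}, applied inside the $\s$-field $(K(P),\s_P)$, and then to recover the minimal polynomial from the $K$-linear independence of the monomials that arise. The whole argument rests on recognizing that the norms $N_i(y_0)$ computed with respect to the structure map $\s_P$ are precisely the monomials spanning $V_n$.

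First I would identify these norms. Since $\s_P$ restricts to $\phi$ on $K[y_0,\dots,y_{n-1}]$ and $\phi(y_j)=y_{j+1}$ for $0\le j\le n-2$, we get $\s_P^{\,j}(y_0)=y_j$ for $0\le j\le n-1$. Using the recursion $N_i(y_0)=\s_P(N_{i-1}(y_0))\,y_0$ together with the commutativity of $S^{-1}K[y_0,\dots,y_{n-1}]$, this yields $N_0(y_0)=1$ and $N_i(y_0)=y_0y_1\cdots y_{i-1}$ for $1\le i\le n$. The only norm requiring genuine work is $N_{n+1}(y_0)=\s_P(N_n(y_0))\,y_0=\phi(y_0\cdots y_{n-1})\,y_0$, and here I would invoke the identity $L(Y)=y_0\,\phi(Y)$ for $Y\in V_n$ established in the proof of Lemma~\ref{(L)ZerosofL}(3). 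Taking $Y=y_0\cdots y_{n-1}$ (the element of $V_n$ with $d_n=1$ and all other $d_i=0$), commutativity lets the factors of $y_0^{\pm1}$ cancel, giving
$$N_{n+1}(y_0)=L(y_0y_1\cdots y_{n-1})=c_0+c_1y_0+c_2y_0y_1+\cdots+c_ny_0y_1\cdots y_{n-1},$$
where the right-hand side is read off directly from the definition of $L$. Rewriting this as $N_{n+1}(y_0)=\sum_{i=0}^{n}c_iN_i(y_0)$ and substituting into $P(y_0)=N_{n+1}(y_0)-\sum_{i=0}^n c_iN_i(y_0)$ shows $P(y_0)=0$; that $y_0\neq0$ is immediate since $y_0$ is a nonzero element of the domain $S^{-1}K[y_0,\dots,y_{n-1}]$.

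For the minimality claim, note that $P(y_0)=0$ means $P(T)\in K[T;\s]P_{y_0}(T)$, so $P_{y_0}$ divides $P$ on the right and $\deg P_{y_0}\le n+1$. To force equality I would show $y_0$ satisfies no nonzero skew polynomial of degree $\le n$, i.e.\ that $N_0(y_0),\dots,N_n(y_0)$ are linearly independent over $K$. By the computation above these are the distinct monomials $1,\,y_0,\,y_0y_1,\,\dots,\,y_0y_1\cdots y_{n-1}$ of $K[y_0,\dots,y_{n-1}]$, which are $K$-linearly independent there and remain so in the overfield $K(P)$. Hence any relation $\sum_{i=0}^n a_iN_i(y_0)=0$ forces all $a_i=0$, so $\deg P_{y_0}\ge n+1$. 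Combining the two bounds, $P_{y_0}$ and $P$ are both monic of degree $n+1$ with $P_{y_0}\mid P$, whence $P_{y_0}=P$.

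I expect the single nontrivial step to be the evaluation of $N_{n+1}(y_0)$: this is where the dynamics of $\s_P=\phi$ at the ``wrap-around'' index $n-1$ interact with the coefficients of $P$, and where one must be careful that $\s_P$ genuinely acts as $\phi$ on the subring and that the localization permits the cancellation of $y_0$. Everything else—the identification of the lower norms and the independence of the monomials—is routine bookkeeping once the relation $L(Y)=y_0\phi(Y)$ is in hand.
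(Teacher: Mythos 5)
Your proof is correct and follows essentially the same route as the paper's: both evaluate $P(y_0)$ by identifying $N_i(y_0)=y_0y_1\cdots y_{i-1}$ and $N_{n+1}(y_0)=L(y_0\cdots y_{n-1})=\sum_{i=0}^n c_iN_i(y_0)$, and both derive minimality from the $K$-linear independence of these monomials, which holds because $y_0,\dots,y_{n-1}$ are algebraically independent over $K$. Your write-up is merely more explicit about the ``wrap-around'' computation of $N_{n+1}(y_0)$, which the paper compresses into a single displayed identity.
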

	\begin{proof}
		The fact that $y_0\in K(P)$ is a root of $P(T)$ follows from the fact that
		$$P(y_0)=y_{0}y_{1}\cdots y_{n-1}	\frac{L(y_0y_1\cdots y_{n-1})}{y_0y_1\cdots y_{n-1}}-c_{n}y_{0}y_{1}\cdots y_{n-1}-\cdots-c_{1}y_0-c_{0}=0.$$
		 If $y_0$ is a root of a skew polynomial 
		$$d_0+d_1T+\cdots+d_mT^m\in K[T;\sigma],$$
		where $m<\deg P$, then 
		$$0=d_0+d_1y_0+d_2y_0y_1+\cdots+d_my_0y_1\cdots y_{m-1},$$
		which implies $d_0=...=d_{m-1}=0$, since $y_0,...,y_{n-1}$ are algebraically independent over $K$. This completes the proof. 
	\end{proof}
	Next, we show that the $\s$-domain $(K[P],\s_P)$ has the following universal property:
	\begin{theorem}\label{(T)K[P]universal}
		
		 An element $0\neq a$ in a $\s$-extension $(L,\s)$ of $(K,\s)$ has $P(T)$ as its minimal polynomial over $(K,\s)$ iff there exists a  $\s$-homomorphism  $$\psi_a:(K[P],\sigma_P)\to   (L,\s),$$ over $(K,\s)$, such that $\psi_a(y_0)=a$. Moreover, such a $\s$-homomorphism is unique.  
		                	
	\end{theorem}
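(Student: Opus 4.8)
The plan is to prove both directions and uniqueness at once by first observing that the extra indeterminates are forced. Since $\s_P(y_i)=y_{i+1}$ for $0\le i\le n-2$, one has $y_i=\s_P^{\,i}(y_0)$, so any $\s$-homomorphism $\psi:(K[P],\s_P)\to(L,\s)$ over $(K,\s)$ with $\psi(y_0)=a$ is compelled to send $y_i\mapsto \s^i(a)$. This pins down $\psi$ on $K[y_0,\dots,y_{n-1}]$, and hence on the localization $K[P]$, because two ring homomorphisms out of $S^{-1}K[y_0,\dots,y_{n-1}]$ that agree on $K[y_0,\dots,y_{n-1}]$ automatically agree on the inverses of elements of $S$. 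This settles uniqueness and, at the same time, tells me exactly which map to build for existence.

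For existence I would start from the $K$-algebra homomorphism $\bar\psi_a\colon K[y_0,\dots,y_{n-1}]\to L$ determined by $y_i\mapsto\s^i(a)$, and the main tool linking it to the hypothesis is the identity $N_i(y_0)=y_0y_1\cdots y_{i-1}$ in $K[P]$, valid for $0\le i\le n$. It shows that a general
$$Y=d_0+d_1y_0+\cdots+d_ny_0y_1\cdots y_{n-1}\in V_n$$
satisfies $\bar\psi_a(Y)=\sum_{i=0}^n d_iN_i(a)=D(a)$, where $D(T)=\sum_{i=0}^n d_iT^i$. Since $\deg D\le n<\deg P$ and $P(T)$ is the minimal polynomial of $a$, a nonzero $Y$ gives $D(a)\neq 0$; thus $\bar\psi_a$ sends every nonzero element of $V_n$, and so every element of $S$, into $L^*$, and extends to a homomorphism $\psi_a\colon K[P]\to L$ over $(K,\s)$ with $\psi_a(y_0)=a$.

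The step I expect to be the crux is checking that this $\psi_a$ is actually a $\s$-homomorphism, i.e.\ $\psi_a\s_P=\s\psi_a$. As both sides are ring homomorphisms, it suffices to test them on the generators $K$ and $y_0,\dots,y_{n-1}$; the cases of $K$ and of $y_0,\dots,y_{n-2}$ are immediate, and everything concentrates on the single twisted generator $y_{n-1}$, where $\s_P(y_{n-1})=L(y_0\cdots y_{n-1})/(y_0\cdots y_{n-1})$. Applying $\psi_a$ and using $N_i(a)=a\s(a)\cdots\s^{i-1}(a)$ turns the required equality into
$$\frac{c_0+c_1N_1(a)+\cdots+c_nN_n(a)}{N_n(a)}=\s^n(a),$$
and the numerator equals $N_{n+1}(a)$ \emph{precisely} when $P(a)=0$; since then $N_{n+1}(a)/N_n(a)=\s^n(a)=\s(\psi_a(y_{n-1}))$, the relation holds. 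In other words, the whole $\s$-compatibility reduces to the one equation $P(a)=0$, which is where the hypothesis is spent.

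For the converse I would argue as follows. Given $\psi_a$ as in the statement, $a=\psi_a(y_0)$ is a unit of $L$ because $y_0$ is a nonzero element of $V_n$ and hence a unit of $K[P]$, so $a\neq 0$. Applying $\psi_a$ to the identity $P(y_0)=0$ from Proposition \ref{(P)rootinK(P)}, together with $\psi_a(N_i(y_0))=N_i(a)$ (an easy induction from $\s$-compatibility), yields $P(a)=0$. Finally, if $a$ were a root of some nonzero $D(T)$ of degree $\le n$, the associated nonzero $Y\in V_n$, being a unit of $K[P]$, would give $\psi_a(Y)=D(a)=0$, contradicting that $\psi_a(Y)$ is a unit; hence $P(T)$ is the minimal polynomial of $a$. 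The only real obstacle throughout is the localization bookkeeping --- guaranteeing $\bar\psi_a$ does not annihilate $S$, which is exactly the place where minimality of $P$ enters --- combined with identifying the compatibility at $y_{n-1}$ with the relation $P(a)=0$.
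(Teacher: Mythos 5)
Your proof is correct and follows essentially the same route as the paper's: both define $\psi$ on $K[y_0,\dots,y_{n-1}]$ by $y_i\mapsto\s^i(a)$, use minimality of $P(T)$ to see that every nonzero element of $V_n$ (hence of $S$) maps to a unit, localize, and verify $\s$-compatibility on generators via the single relation $P(a)=0$ at $y_{n-1}$. Your identification $\bar\psi_a(Y)=D(a)$ for $Y\in V_n$ and the converse/uniqueness arguments match the paper's reasoning, with only minor organizational differences (the paper localizes in two stages).
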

	\begin{proof}
		First, suppose that $P(T)$ is the minimal polynomial of $a\in L$. By the universal property of  $K[y_0,...,y_{n-1}]$, there exists a unique $K$-algebra homomorphism 
		$$\psi:	K[y_0,...,y_{n-1}]\to L$$ such that 
		$\psi(y_i)=\s^i(a)$ for $i=0,1,...,n-1$.
		The condition $P(a)=0$ gives
		$$\s^{n}(a)\cdots\s(a)a
		=c_{0}+c_{1}a+c_{2}a\s(a)+\cdots+c_{n}a\s(a)\s^2(a)\cdots \s^{n-1}(a),
		$$
		from which it follows that $$\s^{n}(a)\psi(y_{n-1})\cdots \psi(y_0)=\psi(L(y_0,...,y_{n-1})).$$  Localizing $\psi$ at $L(y_0,...,y_{n-1})$, we obtain 
		a $K$-algebra homomorphism 
		$$\psi_1:	K[y_0,...,y_{n-1}][L(y_0,...,y_{n-1})^{-1}]\to L,$$
		such that $\psi_1(\frac{L(y_0y_1\cdots y_{n-1})}{y_0y_1\cdots y_{n-1}})=\s^{n}(a)$. For any 
		$$0\neq r=d_0+d_1y_0+d_2y_0y_1+\cdots+d_ny_0y_1\cdots y_{n-1}\in  S,$$
		we have
		$$\psi_1(r)=d_0+d_1a+d_2a\s(a)+\cdots+d_na\s(a)\cdots \s^{n-1}(a)\neq0,$$
		since the minimal polynomial of $a$ has degree $n+1$. Therefore, we can localize  $\psi_1$ at $S$ to obtain 
		a $K$-algebra homomorphism 
		$$\psi_a:	S^{-1} K[y_0,...,y_{n-1}]\to L.$$ 
		Since $\psi_a(\s_P(y_i))=\s(\psi_a(y_i))$ for $i=0,1,...,n-1$, one can easily verify that $\psi_a$ is a $\s$-homomorphism satisfying $\psi_a(y_0)=a$. Conversely, suppose that 
		$$\psi_0:(K[P],\sigma_P)\to   (L,\s),$$
		is a $\s$-homomorphism  over $(K,\s)$ which satisfies $\psi_0(y_0)=a$. We have $P(a)=0$ since $\psi_0$ is a $\s$-homomorphism, and $P(y_0)=0$ by Proposition \ref{(P)rootinK(P)}. The fact that the image of any element in $S$ is invertible in $L$ implies that $P(T)$ is the minimal polynomial of $a$. The uniqueness part follows from the fact that any $\s$-homomorphism on $(K[P],\s_P)$ is completely determined  by its value at  $y_0$ and its restriction to $K$. 
	\end{proof}
	Now, we give some applications of this theorem and Proposition \ref{(P)rootinK(P)}. The following corollary is an immediate consequence of  Proposition \ref{(P)rootinK(P)}. 
	\begin{corollary}
		Any nonconstant monic polynomial $Q(T)\in K[T;\s]$ satisfying $Q(0)\neq 0$ is the minimal polynomial of a (nonzero) element in some $\s$-extension of $(K,\s)$.
	\end{corollary}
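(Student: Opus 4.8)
The plan is to observe that the witnessing element has already been built by the construction preceding Proposition~\ref{(P)rootinK(P)}, so the corollary reduces to matching hypotheses and renaming. Given a nonconstant monic $Q(T)\in K[T;\s]$ with $Q(0)\neq 0$, I would write it in the form
$$Q(T)=T^{n+1}-c_{n}T^{n}-\cdots-c_{1}T-c_{0},$$
where $n+1=\deg Q\geq 1$ and $c_{0}=-Q(0)\neq 0$. This is exactly the shape of the polynomial $P(T)$ fixed throughout the subsection, and the running hypotheses $\deg P=n+1>0$ and $P(0)\neq 0$ (equivalently $c_{0}\neq 0$) are precisely what $Q$ satisfies. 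Thus I may set $P(T):=Q(T)$ and apply the whole machinery of this subsection verbatim.

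With $P(T)=Q(T)$, the construction produces the $\s$-domain $(K[P],\s_{P})$, whose $\s$-field of fractions $(K(P),\s_{P})$ is a $\s$-extension of $(K,\s)$. By Proposition~\ref{(P)rootinK(P)}, the element $y_{0}\in K(P)$ is nonzero and has $P(T)=Q(T)$ as its minimal skew polynomial over $(K,\s)$. Hence $y_{0}$ is the desired nonzero element in the $\s$-extension $(K(P),\s)$, and the statement follows.

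I do not expect a genuine obstacle here, since all the real work is carried by Proposition~\ref{(P)rootinK(P)}; the only point meriting a word of care is the degenerate linear case $n=0$, i.e.\ $Q(T)=T-c_{0}$. There the ring $K[y_{0},\dots,y_{n-1}]$ has no indeterminates, the element $y_{0}$ does not exist, and the construction collapses. But this case is even easier: since $c_{0}\in K$ satisfies $P(c_{0})=c_{0}-c_{0}=0$, while no nonzero constant polynomial can vanish at any point, the element $c_{0}\in K^{*}$ already has minimal polynomial $T-c_{0}=Q(T)$ inside the trivial $\s$-extension $(K,\s)$ itself. Dispatching this boundary case separately (one may simply assume $\deg Q\geq 2$ before invoking the construction) removes the only subtlety and completes the proof.
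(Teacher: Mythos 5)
Your proof is correct and takes essentially the same route as the paper, which simply records the corollary as an immediate consequence of Proposition \ref{(P)rootinK(P)} applied to $P(T):=Q(T)$. Your separate treatment of the linear case $\deg Q=1$ (where the construction has no indeterminate $y_0$ and the root is just $c_0\in K^*$) is a sensible precaution the paper glosses over, but it does not alter the substance of the argument.
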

		The following proposition, the proof of which is left to the reader, explains the connection between $K$-types and prime $\s$-ideals.  
	\begin{proposition}\label{(P)correspondenceKtype}
		Let $Q(T)\in K[T;\s]$ be a nonconstant monic polynomial $Q(T)\in K[T;\s]$ satisfying $Q(0)\neq 0$. For every prime $\s$-ideal $I$ of $(K[Q],\s_Q)$, the polynomial $Q(T)$ is the minimal polynomial of the element $y_0+I$ in the $\s$-field $(Frac(K[Q]/I),\s)$ of fractions of $(K[Q]/I,\s)$. Moreover,  the following holds:\\
		(1) $(Frac(K[Q]/I),\s)=K\langle y_0+I\rangle$, \\
		(2) For any element $0\neq a\in L$ in a $\s$-extension of $(K,\s)$ whose minimal polynomial over $K$ is $Q(T)$, there is a unique prime $\s$-ideal $I_a$ of $(K[Q],s_Q)$ such that $y_0+I_a$ has the same $K$-type as $a$.    
	\end{proposition}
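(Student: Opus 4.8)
The plan is to reduce every claim to the universal property of Theorem \ref{(T)K[P]universal} (applied with $P=Q$), using the $\s$-homomorphisms attached to $\s$-ideals. Fix a prime $\s$-ideal $I$. Since $K$ is a field and $I$ is proper, $I\cap K=0$, so the quotient map $q\colon(K[Q],\s_Q)\to(K[Q]/I,\s_Q)$ is a $\s$-homomorphism over $(K,\s)$; following it by the inclusion of $K[Q]/I$ into its $\s$-field of fractions produces a $\s$-homomorphism $(K[Q],\s_Q)\to(Frac(K[Q]/I),\s)$ over $(K,\s)$ that sends $y_0$ to $y_0+I$. The only delicate point is that $y_0+I\neq 0$: if $y_0\in I$, then all $y_i=\s_Q^i(y_0)$ lie in $I$ because $I$ is a $\s$-ideal; but then the identity $Q(y_0)=0$ of Proposition \ref{(P)rootinK(P)}, in which every term other than $c_0$ is divisible by some $y_i$, forces $c_0\in I\cap K=0$, contradicting $c_0\neq 0$. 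Hence $y_0+I\neq 0$, and Theorem \ref{(T)K[P]universal} yields at once that $Q(T)$ is the minimal polynomial of $y_0+I$.

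For Part (1) I would show that the $\s$-subfield $K\langle y_0+I\rangle$ already exhausts $Frac(K[Q]/I)$. It contains each $y_i+I=\s^i(y_0+I)$, hence the image of every element of $K[y_0,\dots,y_{n-1}]$, and, being a field, the inverses of the (necessarily nonzero) images of the elements of $S$; thus $K\langle y_0+I\rangle\supseteq K[Q]/I$. As a subfield of $Frac(K[Q]/I)$ containing this domain, it must equal the whole fraction field.

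For the existence part of (2), Theorem \ref{(T)K[P]universal} furnishes, since $a$ has minimal polynomial $Q(T)$, a unique $\s$-homomorphism $\psi_a\colon(K[Q],\s_Q)\to(L,\s)$ over $(K,\s)$ with $\psi_a(y_0)=a$. I set $I_a=\ker\psi_a$; it is prime because $K[Q]/I_a$ embeds in the field $L$, and it is a $\s$-ideal because $\psi_a\s_Q=\s_L\psi_a$ with $\s_L$ injective. The induced embedding $K[Q]/I_a\hookrightarrow L$ extends to fraction fields; by Part (1) its source is $K\langle y_0+I_a\rangle$, and (as in Part (1)) its image is the $\s$-subfield generated by $a$, namely $K\langle a\rangle$. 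This gives a $\s$-isomorphism $K\langle y_0+I_a\rangle\to K\langle a\rangle$ over $(K,\s)$ carrying $y_0+I_a$ to $a$, which is exactly the assertion that $y_0+I_a$ and $a$ have the same $K$-type.

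The main obstacle is uniqueness in (2), which I would settle by a second use of the uniqueness clause of Theorem \ref{(T)K[P]universal}. Suppose a prime $\s$-ideal $I$ also makes $y_0+I$ of the same $K$-type as $a$, hence of the same $K$-type as $y_0+I_a$; let $\theta\colon K\langle y_0+I\rangle\to K\langle y_0+I_a\rangle$ be the witnessing $\s$-isomorphism over $(K,\s)$ with $\theta(y_0+I)=y_0+I_a$. Writing $q_I$ and $q_{I_a}$ for the $\s$-homomorphisms of the first paragraph into $Frac(K[Q]/I)$ and $Frac(K[Q]/I_a)$, both $\theta\circ q_I$ and $q_{I_a}$ are $\s$-homomorphisms $(K[Q],\s_Q)\to(Frac(K[Q]/I_a),\s)$ over $(K,\s)$ sending $y_0$ to $y_0+I_a$, so they coincide. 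Since $\theta$ is injective, $I=\ker q_I=\ker(\theta\circ q_I)=\ker q_{I_a}=I_a$, completing the proof.
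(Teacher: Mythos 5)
The paper gives no proof of this proposition (it is explicitly ``left to the reader''), so there is nothing to compare against; your argument is correct and is clearly the intended one, reducing every claim to the universal property of Theorem~\ref{(T)K[P]universal} together with the standard dictionary between $\s$-ideals and kernels of $\s$-homomorphisms (injectivity of the structure maps giving the ``$\iff$'' condition for $\ker\psi_a$, and the uniqueness clause of the theorem giving uniqueness of $I_a$). One small simplification: $y_0\in V_n\setminus\{0\}\subset S$ is already a unit of $K[Q]$, so $y_0+I\neq 0$ for any proper ideal $I$ without needing the identity $Q(y_0)=0$.
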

%proof: The elements of S are invertible in K[Q]. 
	The assignment $a\mapsto I_a$, given in this proposition, establishes a 1-1 correspondence between distinct $K$-types with a fixed minimal polynomial $Q(T)\in K[T;\s]$, and the set of prime $\s$-ideals of $(K[Q],s_Q)$. We study this correspondence more closely. Let $Q(T)\in K[T;\s]$ be a nonconstant monic polynomial $Q(T)\in K[T;\s]$ satisfying $Q(0)\neq 0$. Let $\mathcal{P}(Q)$ be the set of all prime  $\s$-ideals of $(K[Q],s_Q)$. Let $\mathcal{L}(Q)$ be the class of all elements in $\s$-extensions of $(K,\s)$ whose minimal polynomial is $Q(T)$. Consider the equivalence relation $\sim$ on $\mathcal{L}(Q)$ for which $a\sim b$ iff $a$ and $b$ have the same $K$-type. Let $\mathcal{T}(Q)$ be the class of equivalence classes of $\sim$ in $\mathcal{L}(Q)$. Then, the assignment
	$I\mapsto [y_0+I]$, where $[a]$ denotes the equivalence class of $a\in \mathcal{L}(Q)$, establishes a 1-1 correspondence between  $\mathcal{P}(Q)$ and $\mathcal{T}(Q)$. In particular, $\mathcal{T}(Q)$  is a set, called the set of \textit{$K$-types of $Q(T)$}. Using this correspondence, we can define a relation $\leq $ on $\mathcal{T}(Q)$: Given $[a],[b]\in \mathcal{L}(Q)$, we write $[a]\leq [b]$ if $I_a\subset I_b$. There is a unique minimal element in $\mathcal{T}(Q)$ which corresponds to the prime $\s$-ideal $(0)$ of $(K[Q],s_Q)$. Also, using Zorn's lemma, one can show that for any $[a]\in \mathcal{T}(Q)$, there exists a maximal element $[b]\in \mathcal{T}(Q)$ such that $[a]\leq [b]$. We note that the set $\mathcal{P}(Q)$  is a subset of the affine scheme $Spec(K[y_0,...,y_{n-1}])$, where $n=\deg Q-1$. We conclude this part with an example and a remark: 

		\begin{example}
			Let $K$ be an arbitrary (commutative) field. Consider the trivial $\s$-field $(K,1_K)$. Let $P(T)\in K[T;1_K]$ be the polynomial
			$$P(T)=T^2-1.$$
			One can easily check that $K[P]=S^{-1}K[y_0]$, where $S$ is the multiplicative subset of $K[y_0]$ which is generated by $y_0-a$, where $a\in K$. Moreover, we have $\s_P(y_0)=y_0^{-1}$. 
			Since every prime ideal of $K[P]$, apart from the zero ideal, is of the form $K[P]\,f(y_0)$ where $f(y_0)\in K[y_0]$ is an irreducible polynomial of degree $>1$, it is easy to see that a prime ideal $K[P]\,f(y_0)$ of  $K[P]$ is  a $\s$-ideal iff there exists an element $a\in K$ such that
			$$y_0^nf(\frac{1}{y_0})=af(y_0),$$ where $n$ is the degree of $f$. In other words, $K[P]\,f(y_0)$ is a prime $\s$-ideal iff $f_0(y)$ is irreducible and palindromic of degree $>1$.  Therefore, there is a 1-1 correspondence between the set of monic palindromic irreducible polynomials $f(y_0)\in K[y_0]$ of degree $1$ and the set  $\mathcal{P}(T^2-1)\setminus \{(0)\}$. In particular, if $K$ is an algebraically closed field, we have $\mathcal{P}(T^2-1)= \{(0)\}$. We note that the $\s$-extension corresponding to the zero ideal $(0)$ is $(K(y_0),\s)$ with $\s(y_0)=y_0^{-1}$.   
		\end{example}     
		
		\begin{remark}\label{(R)GeneralP}
		Results similar to the ones presented in this part hold for any monic polynomial not contained in $T\, K[T;\s]$. In fact, one can show that a monic polynomial   $P(T)\in K[T;\s]$ is the minimal polynomial of a (nonzero) element in some $\s$-extension of $(K,\s)$  iff $P(T)\notin T\, K[T;\s]$. Also,  a skew polynomial $P(T)\in K[T;\sigma]$ has a nonzero root in some $\s$-extension  of $(K,\s)$ iff $P(T)$ is not of the form $aT^m$ where $a\in K^*$ and $m\geq 0$. Note that the condition  $P(T)\notin T\, K[T;\s]$ is equivalent to the condition $P(0)\neq 0$ provided that $(K,\s)$ is inversive. 
		\end{remark}

\end{subsection}
%%%%%%%%%%%%%%%%%%%%%%%%%%%%%%%%%%%%%%%%%%%%%%%%%%%%%%%	
%%%%%%%%%%%%%%%%%%%%%%%%%%%%%%%%%%%%%%%%%%%%%%%%%%%%%%%%%%%%%%%%%
%%%%%%%%%%%%%%%%%%%%%%%%%%%%%%%%%%%%%%%%%%%%%%%%%%%%%%%%%%%%%%%%% 

\end{section} 
%%%%%%%%%%%%%%%%%%%%%%%%%%%%%%%%%%%%%%%%%%%%%%%%%%%%%%%%%%%%%%%%
%%%%%%%%%%%%%%%%%%%%%%%%%%%%%%%%%%%%%%%%%%%%%%%%%%%%%%%%%%%%%%%%
%%%%%%%%%%%%%%%%%%%%%%%%%%%%%%%%%%%%%%%%%%%%%%%%%%%%%%%%%%%%%%%%
%%%%%%%%%%%%%%%%%%%%%%%%%%%%%%%%%%%%%%%%%%%%%%%%%%%%%%%%%%%%%%%%
%%%%%%%%%%%%%%%%%%%%%%%%%%%%%%%%%%%%%%%%%%%%%%%%%%%%%%%%%%%%%%%%
%%%%%%%%%%%%%%%%%%%%%%%%%%%%%%%%%%%%%%%%%%%%%%%%%%%%%%%%%%%%%%%%
%%%%%%%%%%%%%%%%%%%%%%%%%%%%%%%%%%%%%%%%%%%%%%%%%%%%%%%%%%%%%%%%
%%%%%%%%%%%%%%%%%%%%%%%%%%%%%%%%%%%%%%%%%%%%%%%%%%%%%%%%%%%%%%%%
%%%%%%%%%%%%%%%%%%%%%%%%%%%%%%%%%%%%%%%%%%%%%%%%%%%%%%%%%%%%%%%%
%%%%%%%%%%%%%%%%%%%%%%%%%%%%%%%%%%%%%%%%%%%%%%%%%%%%%%%%%%%%%%%%
%%%%%%%%%%%%%%%%%%%%%%%%%%%%%%%%%%%%%%%%%%%%%%%%%%%%%%%%%%%%%%%%
%%%%%%%%%%%%%%%%%%%%%%%%%%%%%%%%%%%%%%%%%%%%%%%%%%%%%%%%%%%%%%%%
%%%%%%%%%%%%%%%%%%%%%%%%%%%%%%%%%%%%%%%%%%%%%%%%%%%%%%%%%%%%%%%%
%%%%%%%%%%%%%%%%%%%%%%%%%%%%%%%%%%%%%%%%%%%%%%%%%%%%%%%%%%%%%%%%
%%%%%%%%%%%%%%%%%%%%%%%%%%%%%%%%%%%%%%%%%%%%%%%%%%%%%%%%%%%%%%%%
%%%%%%%%%%%%%%%%%%%%%%%%%%%%%%%%%%%%%%%%%%%%%%%%%%%%%%%%%%%%%%%%
\begin{section}{Algebraically closed $\s$-fields}	
	As is well-known, every (commutative) field has an algebraic closure which is unique up to isomorphism. This property of  fields is based on the following two facts: 
	\begin{itemize}
		\item[F1]  Any irreducible polynomial over a field has a root in some extension of the field.
		\item[F2] Any  polynomial over a field has a unique root up to isomorphism.
	\end{itemize}
	Thanks to F1, one can construct an algebraic closure of a given field by successively "adjoining" roots of irreducible polynomials.   The uniqueness of algebraic closure for fields is a consequence of F2.    
	
	More generally, the notion of root can be defined for other algebraic objects, using which one can define the corresponding concept of algebraic closedness. Generally speaking, if F1 holds for a particular algebraic object, then one can construct algebraic closures for the algebraic object under study by adjoining roots "one after another". However, if F2 does not hold, there may exist several notions of algebraic closedness.  We refer the reader to  \cite{cohn1975presentations,niven1941equations} for the case of skew fields, and \cite{neumann1943adjunction, scott1951algebraically} for the case of groups, (see also  \cite{robinson1971notion} for a model-theoretic approach to this problem).   In this section, we  investigate  the question of algebraic closedness for $\s$-fields. Note that F1 holds for skew roots, see Proposition \ref{(P)rootinK(P)}. However, F2 does not hold for skew roots, see  Proposition \ref{(P)correspondenceKtype}. The lack of F2 for $\s$-fields gives rise to different notions of algebraic closedness for $\s$-fields.

%%%%%%%%%%%%%%%%%%%%%%%%%%%%%%%%%%%%%%%%%%%%%%%%%%%%%%%%%%%%%%%%%	
	%%%%%%%%%%%%%%%%%%%%%%%%%%%%%%%%%%%%%%%%%%%%%%%%%%%%%%%%%%%%%%%%% 
	%%%%%%%%%%%%%%%%%%%%%%%%%%%%%%%%%%%%%%%%%%%%%%%%%%%%%%%%%%%%%%%%%  

	\begin{subsection}{Some notions of algebraically closed $\s$-fields} 
		The simplest notion of algebraic closedness for $\sigma$-field is the following: A $\s$-field $(L,\sigma)$ is called \textit{$1$-algebraically closed} if every  nonconstant  polynomial in $L[T;\s]$ has a root in $L$. In this part, we show that every $\s$-field has  a $\s$-extension which is $1$-algebraically closed. To prove this result, we implement a classical method used to prove the existence of algebraic closures for ordinary fields, see \cite[Theorem 2.5]{lang2012algebra}. 
		
		\begin{theorem}\label{(T)algebraicallyclosed}
			
			Every $\s$-field can be embedded in a $1$-algebraically closed $\s$-field. 
			
		\end{theorem}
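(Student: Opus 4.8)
The plan is to reduce the theorem to a single \emph{one-step} construction and then iterate it $\omega$ times. Concretely, I would first establish the lemma that every $\s$-field $(L,\s)$ admits a $\s$-extension $(L',\s)$ in which \emph{every} nonconstant $P(T)\in L[T;\s]$ has a root. Granting this, set $L_0=L$ and $L_{k+1}=(L_k)'$, and let $\bar L=\bigcup_{k\ge 0}L_k$ with its direct-limit $\s$-field structure. Any nonconstant $Q(T)\in\bar L[T;\s]$ has its finitely many coefficients in some $L_k$, so $Q\in L_k[T;\s]$ and $Q$ already acquires a root in $L_{k+1}\subseteq\bar L$; hence $\bar L$ is $1$-algebraically closed, which is the theorem. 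The only things to verify at this level are routine: that a chain of $\s$-field extensions has a direct limit which is again a $\s$-field, and that all the embeddings are compatible.

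For the one-step lemma I would transcribe Artin's classical argument into difference algebra. Let $\mathcal{N}$ be the set of monic nonconstant $P\in L[T;\s]$ with $P(0)\neq 0$ (a polynomial with $P(0)=0$ already has the root $0$, and scaling by the leading coefficient changes nothing). To each $P\in\mathcal{N}$ attach a difference indeterminate $x_P$ and form the ring of difference polynomials $A=L\{x_P: P\in\mathcal{N}\}$, with its injective endomorphism $\s$. Encoding the evaluation map $P\mapsto P(x_P)$, put $\pi_P=\sum_{i=0}^{\deg P}a_i\,x_P(i-1)\cdots x_P(0)\in A$ (the $a_i$ being the coefficients of $P$), so that a $\s$-homomorphism kills $\pi_P$ precisely when it sends $x_P(0)$ to a root of $P$. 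Let $I$ be the ideal of $A$ generated by all $\s^{j}(\pi_P)$, $j\ge 0$, a $\s$-stable ideal. The crux is that $I$ is proper: a relation $1=\sum_k g_k\,\s^{j_k}(\pi_{P_k})$ would involve only finitely many polynomials $P_1,\dots,P_r$, and iterating F1 (Proposition \ref{(P)rootinK(P)}, with Remark \ref{(R)GeneralP}) finitely often produces one $\s$-extension $(E,\s)$ of $(L,\s)$ holding a root $b_k$ of each $P_k$; the $\s$-homomorphism $A\to E$ sending $x_{P_k}(0)\mapsto b_k$ and all other $x_P(0)\mapsto 0$ (it exists by the universal property, Theorem \ref{(T)K[P]universal}) then sends the right-hand side to $0$ but the left-hand side to $1$, a contradiction.

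The main obstacle is the final extraction of a $\s$-field from the proper $\s$-ideal $I$, and this is exactly where the difference-algebraic setting departs from Lang's proof. In the commutative case one passes to $A/\mathfrak{m}$ for a maximal ideal $\mathfrak{m}\supseteq I$ and is handed a field. Here a maximal $\s$-ideal yields only a $\s$-simple $\s$-ring, which need \emph{not} be a domain: for example $L\times L$ with $\s$ interchanging the factors is $\s$-simple yet admits no $\s$-homomorphism to any $\s$-field, so no field of fractions is available. To repair this I would produce a prime ideal $\mathfrak{p}\supseteq I$ with $\s^{-1}(\mathfrak{p})=\mathfrak{p}$; then $A/\mathfrak{p}$ is a $\s$-domain whose $\s$-field of fractions is the desired $L'$, with $x_P(0)+\mathfrak{p}$ a root of $P$. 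The existence of such a reflexive prime $\s$-ideal above a proper $\s$-ideal is the one genuinely nontrivial input, to be quoted from the theory of difference rings (\cite{levin2008difference}). Alternatively, and this is the route I would actually prefer for a self-contained proof, one can sidestep the obstacle entirely: well-order $\mathcal{N}$ and adjoin roots one at a time, setting $M_{\alpha+1}=M_\alpha\langle b_\alpha\rangle$ for a root $b_\alpha$ of $P_\alpha$ furnished directly by Proposition \ref{(P)rootinK(P)} over $M_\alpha$, and taking direct limits at limit ordinals; the resulting $\s$-field serves as $L'$. This replaces the single global quotient by a transfinite chain of one-root extensions, each of which is a $\s$-field by construction, so the primality question never arises.
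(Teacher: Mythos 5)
Your preferred route---transfinitely adjoining one root at a time via Proposition \ref{(P)rootinK(P)} to build a one-step extension $L'$ in which every nonconstant polynomial over $L$ has a root, then iterating $\omega$ times and taking the union---is exactly the paper's proof, down to the well-ordering of the polynomials and the final countable chain (the paper well-orders only the irreducible polynomials of degree $>1$, which automatically have nonzero constant term, while you restrict to $P(0)\neq 0$; both make Proposition \ref{(P)rootinK(P)} applicable). Your Artin-style alternative is a genuine detour whose key input---a prime $\s$-ideal lying over the proper $\s$-ideal $I$---you rightly flag as nontrivial and leave to a citation, but since you explicitly fall back on the transfinite chain, the proposal as submitted is correct and essentially identical to the paper's argument.
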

		\begin{proof}
			Let $(K,\sigma)$ be an arbitrary $\sigma$-field. The idea is to adjoin skew roots of irreducible skew polynomials until there are no irreducible polynomials of degree $> 1$ left to deal with. This will be done in a series of steps as described below:\\
			\textbf{Selecting and ordering polynomials (Step 1)}:  Let $D$  be the set of all monic irreducible  polynomials in $K[T;\s]$ of degree $>1$ which do not have a root in $K$. 
			 Consider a total order on $D_1=\{1\}\cup D$ which turns $D_1$ into a well-ordered set with $1$ being its least element. Let $\alpha_0$ be the ordinal of $D_1$. Given an ordinal $\alpha\leq \alpha_0$, we denote the $\alpha$-th element of $D_1$ by $P_\alpha$.  Note that any polynomial in $D$ has a nonzero constant term since otherwise it would not be irreducible.\\ 
			 \textbf{Adjoining roots (Step 2)}: 
			 Using transfinite induction, we define a sequence $(K_\alpha,\s)$, where $\alpha\leq \alpha_0$ is an ordinal, of $\s$-fields as follows: we set $(K_0,\s)=(K,\s)$ and $K_\alpha=(\cup_{\beta<\alpha}K_\beta)(P_\alpha)$, see Proposition \ref{(P)rootinK(P)}. We note that   $(K_\alpha,\s)$ is a $\s$-extension of $(K_\beta,\s)$ if $\beta\leq \alpha\leq \alpha_0$. Now, we consider the $\s$-field $(K^{(1)},\s)=(K_{\alpha_0},\s)$. Clearly,  $(K^{(1)},\s)$ is   a $\s$-extension of $(K,\s)$.  By construction, every $P(T)\in D$ has a root in $K^{(1)}$. Therefore, any irreducible nonconstant polynomial in  $K[T;\s]$ has a root in  $K^{(1)}$. \\
			  \textbf{Induction (Step 3)}: 
			  Applying Steps 1 and 2 to $(K^{(1)},\s)$ gives rise to a   $\s$-extension   $(K^{(2)},\s)$ of  $(K^{(1)},\s)$ in which any nonconstant polynomial belonging to $K^{(1)}[T;\s]$ has a root. Inductively, we obtain a sequence of $\s$-fields 
			$$(K^{(1)},\s)\subset (K^{(2)},\s)\subset (K^{(3)},\s)\subset \dots,$$
			such that  any nonconstant polynomial belonging to  $K^{(n)}[T;\s]$  has a root in  $(K^{(n+1)},\s)$. Finally, we consider the $\s$-field
			$$(K^{\infty},\s)=\cup_{n=1}^\infty (K^{(n)},\s).$$
			Any  skew polynomial $P(T)\in K^{\infty}[T;\s]$ belongs to some 
			$K^{(n)}[T;\s]$. It follows that if $P(T)$  has a root in $(K^{(n+1)},\s)$, hence in $(K^{\infty},\s)$. Therefore, $(K^{\infty},\s)$ is $1$-algebraically closed and contains $(K,\s) $ as a $\s$-subfield. This completes the proof of the theorem. 
		\end{proof}

		A $1$-algebraically closed $\s$-field is only required to contain at least one root of every nonconstant skew polynomial. The requirement that any skew polynomial have more than one root, gives rise to the following notion of algebraic closedness: Let $c$ be a cardinal number. A $\s$-field $(L,\s)$ is called \textit{$c$-algebraically closed} if any skew polynomial over $(L,\s)$, of degree $>1$ and with a nonzero constant term, has at least $c$ distinct roots in $L$.  Clearly, if $c\leq c'$ are two cardinals, then any $c'$-algebraically closed $\s$-field is $c$-algebraically closed.

	\begin{theorem}\label{(T)$c$-algebraically closed}
	
	Let $c$ be a cardinal number. Any $\s$-field can be embedded in a $c$-algebraically closed $\s$-field. 
	
	\end{theorem}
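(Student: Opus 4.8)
The plan is to follow the scheme of the proof of Theorem~\ref{(T)algebraicallyclosed}, adjoining roots by transfinite induction, but now to adjoin $c$ \emph{distinct} roots of each relevant polynomial rather than a single root. The engine of the construction is Proposition~\ref{(P)rootinK(P)}: given any $\s$-field $(F,\s)$ and any $P(T)\in F[T;\s]$ of degree $>1$ with $P(0)\neq 0$, the $\s$-extension $(F(P),\s_P)$ contains a root $y_0$ of $P(T)$. The crucial extra observation I would record is that $y_0$ is \emph{transcendental over $F$} --- indeed $y_0$ is one of the algebraically independent indeterminates used to build $F[P]$, a fact already exploited in the proof of Proposition~\ref{(P)rootinK(P)} --- so $y_0$ differs from every element of $F$, in particular from every root of $P(T)$ that already lies in $F$.

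First I would prove the core statement: for a fixed $P(T)$ of degree $>1$ with $P(0)\neq 0$ over $(K,\s)$, there is a $\s$-extension in which $P(T)$ has at least $c$ distinct roots. To this end, build an increasing chain $(L_\alpha,\s)$ indexed by the ordinals $\alpha<c$, with $L_0=K$, $L_{\alpha+1}=L_\alpha(P)$ (applying Proposition~\ref{(P)rootinK(P)} to $P(T)$ now viewed over $L_\alpha$; note that its degree and its nonzero constant term are unchanged), and $L_\lambda=\bigcup_{\alpha<\lambda}L_\alpha$ at limit ordinals. Each successor step contributes a root $t_\alpha\in L_{\alpha+1}$ transcendental over $L_\alpha$. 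Since $t_\beta\in L_\alpha$ for every $\beta<\alpha$, the element $t_\alpha$ is distinct from all earlier $t_\beta$; hence $\{t_\alpha:\alpha<c\}$ is a set of $c$ distinct roots of $P(T)$ in the $\s$-field $L_c=\bigcup_{\alpha<c}L_\alpha$.

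Next I would assemble these one-polynomial constructions exactly as in Theorem~\ref{(T)algebraicallyclosed}. Well-order the set of all $P(T)\in K[T;\s]$ of degree $>1$ with $P(0)\neq 0$, and by transfinite induction adjoin, via the core construction, $c$ distinct roots of each in turn; because being a root is preserved under $\s$-extension, the roots produced for earlier polynomials survive, and the resulting $\s$-extension $(K^{(1)},\s)$ has the property that every such $P(T)\in K[T;\s]$ possesses at least $c$ distinct roots in $K^{(1)}$. Iterating this construction yields a chain $(K,\s)=(K^{(0)},\s)\subset (K^{(1)},\s)\subset (K^{(2)},\s)\subset\cdots$ in which every degree $>1$, nonzero-constant-term polynomial over $K^{(m)}$ acquires at least $c$ distinct roots in $K^{(m+1)}$. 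Setting $(K^{\infty},\s)=\bigcup_{m\geq 0}(K^{(m)},\s)$, any such polynomial over $K^{\infty}$ has its finitely many coefficients in some $K^{(m)}$ and therefore has at least $c$ distinct roots already in $K^{(m+1)}\subseteq K^{\infty}$. Thus $(K^{\infty},\s)$ is $c$-algebraically closed and contains $(K,\s)$ as a $\s$-subfield.

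The only genuinely delicate point --- and the step I expect to require the most care --- is guaranteeing that the roots produced are really \emph{distinct}, since, unlike for ordinary polynomials over a commutative field, there is no a priori bound on the number of roots of a skew polynomial (recall that $T^2-1$ over $\mathbb{C}$ with complex conjugation vanishes on the entire unit circle). Distinctness is exactly what the transcendence of the generic root $y_0$ over the base field supplies: at each successor stage the new root cannot coincide with any element of the current field, hence with none of the previously adjoined roots. The remaining ingredients --- that an increasing union of $\s$-fields is again a $\s$-field, and the routine cardinality bookkeeping ensuring that each transfinite recursion stays within a set --- are standard.
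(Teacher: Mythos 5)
Your proposal is correct and follows essentially the same route as the paper: well-order the relevant polynomials, adjoin $c$ roots of each by transfinite recursion via Proposition~\ref{(P)rootinK(P)}, and iterate countably often before taking the union. The paper explicitly leaves the details of ``adjoining $c$ copies of roots'' to the reader, and your observation that each new root $y_0$ is transcendental over the current base field --- hence distinct from every previously adjoined root --- is precisely the justification the paper's outline omits.
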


	\begin{proof}
		The proof is similar to that of Theorem \ref{(T)algebraicallyclosed}. Here is an outline of the proof. The details are left to the reader to verify.\\   
		\textbf{Selecting and ordering polynomials (Step 1)}:  Let $D$  be the set of all monic polynomials in $K[T;\s]$ which are of degree $>1$ and have a nonzero constant term. 
		Consider a total order on $D_1=\{1\}\cup D$ which turns $D_1$ into a well-ordered set with $1$ being its least element. Let $\alpha_0$ be the ordinal of $D_1$. Given an ordinal $\alpha\leq \alpha_0$, we denote the $\alpha$-th element of $D_1$ by $P_\alpha$. \\ 
		\textbf{Adjoining roots (Step 2)}: 
		Using transfinite induction, we define a sequence $(K_\alpha,\s)$, where $\alpha\leq \alpha_0$ is an ordinal, of $\s$-fields as follows: We set$(K_0,\s)=(K,\s)$; $K_\alpha$ is obtained from $\cup_{\beta<\alpha}K_\beta$ by adjoining $c$ copies of roots of $P_\alpha$ using  Proposition \ref{(P)rootinK(P)}. In particular,
        $(K^{(1)},\s)=(K_{\alpha_0},\s)$ is  a $\s$-extension of $(K,\s)$, and,  by construction, every $P(T)\in D$ has at least $c$ distinct roots in $K^{(1)}$. \\
		\textbf{Induction (Step 3)}: 
		Using Steps 1, 2  and induction, we can construct a  sequence of $\s$-fields
		$$(K^{(1)},\s)\subset (K^{(2)},\s)\subset (K^{(3)},\s)\subset \dots,$$
		such that  any polynomial belonging to  $K^{(n)}[T;\s]$, of degree $>1$ and with a nonzero constant term, has a root in  $(K^{(n+1)},\s)$. One can see that the $\s$-field
		$$(K^{\infty},\s)=\cup_{n=1}^\infty (K^{(n)},\s).$$
		 is $c$-algebraically closed and contains $(K,\s) $ as a $\s$-subfield. 
	\end{proof}

	$\s$-field is $c$-algebraically closed, it is only enough to check the condition for polynomials of degree 2.

	In the proof of Theorem \ref{(T)$c$-algebraically closed}, we have used Proposition \ref{(P)rootinK(P)} as a consequence of which most roots would be of the same type. Taking into account that skew polynomials can have roots of different types, we introduce the following definition: A $\s$-field $(L,\sigma)$ is called \textit{type-algebraically closed} if for any element $a$ in some $\s$-extension of $(L,\sigma)$ with minimal polynomial $P(T)\in L[T;\s]$ over $(L,\sigma)$, there is $b\in L$ whose minimal polynomial over the $\s$-subfield $K$ of $(L,\sigma)$  generated by the coefficients of $P(T)$ is the polynomial $P(T)$, and $a$ and $b$ have the same $K$-type.   Using Proposition \ref{(P)correspondenceKtype} and modifying the steps in the proof Theorem \ref{(T)algebraicallyclosed}, one can prove the existence of type-algebraically closed $\s$-fields. More precisely, we have the following result the proof of which is left to the reader.
	
	\begin{theorem}\label{(T)type-algebraicallyclosed}
		
		Any $\s$-field can be embedded in a type-algebraically closed $\s$-field. 
		
	\end{theorem}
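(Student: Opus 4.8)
The plan is to replicate the transfinite construction of Theorem~\ref{(T)algebraicallyclosed}, with a single modification in the adjoining step: instead of adjoining one (generic) root of each polynomial I would adjoin a representative of \emph{every} $K$-type of that polynomial, the types being indexed, via Proposition~\ref{(P)correspondenceKtype}, by the prime $\s$-ideals of $(K[P],\s_P)$. Concretely, fix $(K,\s)$ and, as in Step~1, well-order the set $D$ of all monic $P(T)\in K[T;\s]$ of degree $>1$ with $P(0)\neq 0$. In the analogue of Step~2 I would run a transfinite recursion whose stages are indexed by pairs $(P,J)$ with $P\in D$ and $J$ ranging over the set $\mathcal{P}(P)$ of prime $\s$-ideals of $(K[P],\s_P)$; since each $\mathcal{P}(P)$ is a set and $D$ is a set, the recursion stays set-sized. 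For a single pair, Proposition~\ref{(P)correspondenceKtype} supplies the $\s$-field $(\mathrm{Frac}(K[P]/J),\s)=K\langle y_0+J\rangle$ in which $y_0+J$ has minimal polynomial $P$ and $K$-type $J$, so the problem reduces to \emph{adjoining one prescribed type} to the field currently in hand.

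The per-stage adjunction is the step I would carry out next. Let $F$ be the current $\s$-field (so $K\subseteq F$), let $P$ have coefficient field $K$, and let $J\in\mathcal{P}(P)$. To keep every adjunction a direct extension of the field in hand, I would realize $J$ \emph{over $F$}: applying Proposition~\ref{(P)correspondenceKtype} to $(F,\s)$, the $F$-types of $P$ are the prime $\s$-ideals of $(F[P],\s_P)$, and the natural $\s$-homomorphism $(K[P],\s_P)\to(F[P],\s_P)$ lets me seek a prime $\s$-ideal $J'$ of $(F[P],\s_P)$ whose preimage in $(K[P],\s_P)$ is $J$. For such a $J'$ the element $y_0+J'\in\mathrm{Frac}(F[P]/J')$ is a root of $P$ over $F$ whose behaviour over $K$ is governed by $J$, hence has minimal polynomial $P$ and $K$-type $J$; and $\mathrm{Frac}(F[P]/J')$ is built directly over $F$, with no amalgamation invoked. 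Chaining this over all pairs $(P,J)$ yields a $\s$-extension $K^{(1)}$ of $K$ realizing every $K$-type of every such $P$; iterating the whole construction produces a tower $K^{(1)}\subset K^{(2)}\subset\cdots$ and I would set $(L,\s)=\bigcup_n(K^{(n)},\s)$.

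The final verification is a tracing argument. Given $a$ in a $\s$-extension of $L$ with $\min_L(a)=P(T)\in L[T;\s]$, its coefficient field $K$ lies in some $K^{(n)}$; since $P\in K[T;\s]$ is the minimal polynomial of $a$ over $L$, it is also the minimal polynomial of $a$ over $K$, so $a$ determines a $K$-type, corresponding to a prime $\s$-ideal $J$ of $(K[P],\s_P)$. At the stage built over $K^{(n)}$ a representative $b$ of this very type was adjoined, with $b\in K^{(n+1)}\subseteq L$, $\min_K(b)=P$ and $b\sim_K a$, which is exactly what type-algebraic closedness demands.

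The main obstacle is the existence of the prime $\s$-ideal $J'$ lying over $J$, i.e.\ a lying-over statement for the $\s$-homomorphism $(K[P],\s_P)\to(F[P],\s_P)$; equivalently, it is $\s$-amalgamation of $K\langle y_0+J\rangle$ with $F$ over $K$. In contrast with the classical case this is \emph{not} automatic: a nonzero $\s$-ring may possess no prime $\s$-ideal at all, since on its spectrum the map $\mathfrak{q}\mapsto\s^{-1}(\mathfrak{q})$ can permute the minimal primes without a fixed point, and for this reason $\s$-fields fail to amalgamate in general (so the naive device $F\otimes_K(K[P]/J)$ need not admit a prime $\s$-ideal over $(0)$). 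What should rescue the argument is that one only ever needs to realize \emph{compatible} types: a $K$-type whose realization would conflict with $F$ becomes untriggerable, because the corresponding root already lies in $F$ and its minimal polynomial over $F$ drops below $\deg P$, so it no longer occurs as a minimal polynomial over $F$. Turning this self-consistency into a proof that the required lifting $J'$ always exists is where the real work of the argument lies, and I expect it to be the delicate point the reader is being asked to supply.
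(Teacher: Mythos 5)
Your overall strategy is exactly the one the paper intends: the paper gives no proof of this theorem beyond the instruction to combine Proposition~\ref{(P)correspondenceKtype} with the transfinite construction of Theorem~\ref{(T)algebraicallyclosed}, and your enumeration of stages by pairs $(P,J)$ with $J$ a prime $\s$-ideal of $(K[P],\s_P)$ is the right way to carry that out. The tracing argument at the end is also sound (in particular, the observation that the minimal polynomial of $a$ over $L$ is also its minimal polynomial over the coefficient field $K$ is correct, since the former lies in $K[T;\s]$, is monic, vanishes at $a$, and no polynomial over $K$ of smaller degree can vanish at $a$). But the proposal has a genuine gap at the per-stage adjunction, and you say so yourself: you reduce the step to the existence of a prime $\s$-ideal $J'$ of $(F[P],\s_P)$ lying over $J$, correctly note that difference rings need not possess prime $\s$-ideals and that $\s$-fields do not amalgamate in general, and then leave the resolution open. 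Worse, the target you set for the remaining work --- ``a proof that the required lifting $J'$ always exists'' --- is not attainable: it is false in general, precisely because of the amalgamation failure you describe.

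The repair is not to prove lying-over but to weaken what each stage must accomplish. At the stage $(P,J)$ over the current field $F$, adjoin a realization of the type $J$ \emph{only if} one exists, i.e., only if there is some $\s$-extension of $(F,\s)$ containing an element of $K$-type $J$; when it exists, such an extension may be taken to be generated over $F$ by that single element, so the adjunction is a direct extension of $F$ of bounded size and no amalgamation lemma is needed. The stages at which no realization exists are simply skipped. The final verification then closes itself: if $a$ lies in a $\s$-extension $(M,\s)$ of $(L,\s)$ with minimal polynomial $P$ over $L$ and $K$-type $J$, then for the field $F$ current at the stage $(P,J)$ (which exists at some level $n$ with $K\subseteq K^{(n)}$) the field $M\supseteq L\supseteq F$ is itself a $\s$-extension of $F$ realizing the type $J$, so that stage was \emph{not} skipped and a representative $b\in K^{(n+1)}\subseteq L$ of the type $J$ was adjoined. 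In other words, the dichotomy ``either the type is realizable over $F$ and we adjoin it, or it is unrealizable over $F$ and hence over every extension of $L\supseteq F$, so the definition imposes no obligation'' replaces the false lying-over claim and is essentially tautological. You gesture at this (``untriggerable'' types), but your explanation --- that the corresponding root already lies in $F$ and its minimal polynomial over $F$ drops in degree --- is not the right mechanism and points the argument in the wrong direction; the obligation in the definition is triggered by minimal polynomials over $L$, and unrealizability over $F$ propagates upward to $L$ simply by inclusion of extension classes.
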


%	Any type-algebraically closed $\s$-field is $1$-algebraically closed, but a type-algebraically closed $\s$-field  may not be $2$-algebraically closed. 
	One can define a "hybrid version"  of algebraic closedness in which every polynomial has at least a prescribed number of distinct roots of every possible type. We leave it to the reader to formulate and prove the existence of such algebraically closed $\s$-fields. We conclude this  part with an observation regarding polynomial factorization in algebraically closed $\s$-fields.
	
	\begin{proposition}\label{(P)Factorizationinalgebraicallyclosed}
	(a) Let $(K,\s)$ be a $1$-algebraically closed $\s$-field. Then, any nonzero skew polynomial $P(T)\in K[T;\s]$ splits completely, that is, there exist $a,a_1,...,a_n\in K^*$ and $m\geq 0$ such that
	$$P(T)=a(T-a_n)\cdots(T-a_2)(T-a_1)T^m.$$
	(b) Let $(K,\s)$ be $\aleph_0$-algebraically closed. Then, any $P(T)\in K[T;\s]$ can be written as 
	$$P(T)=a(T-a_n)\cdots(T-a_2)(T-a_1)T^m,$$
	where $a\in K^*$, $a_1,...,a_n\in K^*$ are distinct, and $m\geq 0$.  
	\end{proposition}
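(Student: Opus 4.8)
The plan is to prove both parts by repeatedly splitting off monic linear factors $T-a$ on the right via Lemma~\ref{(L)DivisionP(a)}, the difference being that part (b) additionally requires the roots to be distinct. Throughout I will use that for any $R(T)\in K[T;\s]$ the value $R(0)$ equals the constant term of $R$, since $N_0(0)=1$ and $N_i(0)=0$ for $i\ge 1$; in particular $0$ is a root of $R$ precisely when its constant term vanishes.

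For (a), I would first write $P(T)=P_1(T)T^m$ with $m\ge 0$ and $P_1(0)\ne 0$, obtained by pulling $T$ out on the right (using $Ta=\s(a)T$) as long as the constant term is $0$. Then I induct on $\deg P_1$. If $\deg P_1=0$ we are done; otherwise $1$-algebraic closedness gives a root $a_1$ of $P_1$, which is nonzero because $P_1(0)\ne 0$, and Lemma~\ref{(L)DivisionP(a)} yields $P_1(T)=Q(T)(T-a_1)$. Comparing constant terms shows that the constant term of $Q(T)(T-a_1)$ equals $-Q(0)a_1$, so $Q(0)\ne 0$ and the induction applies to $Q$. This produces $P_1(T)=a(T-a_n)\cdots(T-a_1)$ and hence the desired factorization of $P$.

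For (b) the same peeling produces linear factors with nonzero roots (note that $\aleph_0$-algebraic closedness implies the existence of at least one root for every nonconstant polynomial, so (a) already applies). The obstacle is distinctness: the root of a degree-one right factor is forced, so one cannot simply pick a fresh root at the last step. My plan is to prove the stronger statement that for $P_1$ with $P_1(0)\ne 0$ of degree $d\ge 2$ and any finite set $F\subset K$, there is a factorization $P_1(T)=a(T-a_d)\cdots(T-a_1)$ with the $a_i$ distinct and all lying outside $F$, by induction on $d$. For $d\ge 3$ the polynomial has degree $>1$ and nonzero constant term, hence at least $\aleph_0$ roots; I choose a root $a_1\notin F$ (only finitely many are excluded), split off $(T-a_1)$, and apply the inductive hypothesis to the quotient with forbidden set $F\cup\{a_1\}$.

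The hard part is the base case $d=2$, which is exactly where the forced-root phenomenon must be handled. Normalizing $P_1(T)=T^2+c_1T+c_0$ with $c_0\ne 0$, each right root $\alpha$ gives $P_1(T)=(T-\beta)(T-\alpha)$, and expanding shows the constant term is $\beta\alpha$, so $\beta=c_0\alpha^{-1}$ is determined by $\alpha$. It then suffices to find a root $\alpha$ with $\alpha\notin F$, $c_0\alpha^{-1}\notin F$, and $\alpha\ne c_0\alpha^{-1}$: the first two conditions exclude only finitely many $\alpha$ (the map $\alpha\mapsto c_0\alpha^{-1}$ being injective), while the third, namely $\alpha^2\ne c_0$, excludes at most two values since $K$ is a commutative field. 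As $P_1$ has infinitely many roots, such an $\alpha$ exists. Applying the strengthened statement with $F=\emptyset$ (the cases $\deg P_1\le 1$ being trivial, a linear factor having a single root) and reattaching $T^m$ finishes (b). I expect this degree-two analysis to be the only genuinely nontrivial step.
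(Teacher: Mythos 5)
Your proposal is correct and follows the same basic strategy as the paper: pull out $T^m$ on the right, then repeatedly split off monic linear factors $T-a_i$ using Lemma \ref{(L)DivisionP(a)}, tracking the nonvanishing of constant terms. The difference is in part (b), where the paper's own argument is only a sketch: it says one can ``find a root $a_2\neq a_1$ since $K$ is $\aleph_0$-algebraically closed'' and that ``a simple induction completes the proof,'' but this does not address the final step of the induction, where the remaining quotient has degree one and its root is forced rather than chosen --- exactly the issue you isolate. Your strengthened induction hypothesis (a factorization avoiding any prescribed finite set $F$) together with the explicit degree-two base case --- where the complementary root is $\beta=c_0\alpha^{-1}$, so one only needs a root $\alpha$ of the infinitely many available with $\alpha^2\neq c_0$ and $\alpha, c_0\alpha^{-1}\notin F$, excluding finitely many values since $K$ is a commutative field --- is precisely what is needed to close the induction. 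So your write-up is not a different method but a completed version of the paper's argument; the degree-two analysis you flag as the nontrivial step is indeed the point the paper elides.
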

	\begin{proof}
		Let $P(T)\in K[T;\s]$. There exists $m\geq 0$ and $P(T)\in K[T;\s]$ such that $Q(T)$ has a nonzero constant term and $P(T)=Q(T)T^m$. If $Q(T)$ is a constant, we are done. Otherwise, there exists $a_1\in K$ such that $Q(a_1)=0$ because $(K,\s)$ is $1$-algebraically closed. By Lemma \ref{(L)DivisionP(a)}, there exists $Q_1(T)\in K[T;\s]$ such that $$Q(a)=Q_1(a)(T-a_1).$$ Clearly, $Q_1(T)$ has a nonzero constant term. Now, we prove (a) and (b) as follows:\\
		(a) Using induction on the degree of $
		P$, one can obtain the desired factorization. \\
		(b) If $Q_1(T)$ is a constant, we are done. Otherwise, we can find a root $a_1\neq a_2\in K$ since $K$ is $\aleph_0$-algebraically closed. A simple induction on the degree of $Q(T)$ completes the proof. 
	\end{proof}

\end{subsection}
%%%%%%%%%%%%%%%%%%%%%%%%%%%%%%%%%%%%%%%%%%%%%%%%%%%%%%%%%%%%%%%%%	
%%%%%%%%%%%%%%%%%%%%%%%%%%%%%%%%%%%%%%%%%%%%%%%%%%%%%%%%%%%%%%%%% 
%%%%%%%%%%%%%%%%%%%%%%%%%%%%%%%%%%%%%%%%%%%%%%%%%%%%%%%%%%%%%%%%%  

\begin{subsection}{Conjugacy classes in algebraically closed $\s$-fields} 
	In order to determine whether a $\s$-field is $c$-algebraically closed for $c>1$, it is enough to study polynomials of degree two. More precisely, we have the following: 
	\begin{proposition}\label{(P)2-algebraic}
		Let $c>1$ be a cardinal number. A $\s$-field $(L,\s)$ is $c$-algebraically closed iff it is  $1$-algebraically closed and every skew polynomial $P(T)\in L[T;\s]$ of degree $2$ whose constant term is nonzero has at least $c$ distinct roots in $L$. 
	\end{proposition}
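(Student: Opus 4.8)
The plan is to dispatch the forward implication by unwinding the definitions, and to handle the reverse implication via a single observation: a root of a right factor of a product is automatically a root of the whole product.

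For the forward direction I would assume $(L,\s)$ is $c$-algebraically closed. The degree-$2$ requirement is then immediate, being a special case of the defining condition. To recover $1$-algebraic closedness I would verify that an arbitrary nonconstant $P(T)\in L[T;\s]$ has a root by splitting into cases: a degree-$1$ polynomial $a_1T+a_0$ with $a_1\neq 0$ has the root $-a_1^{-1}a_0$; a polynomial with zero constant term has $0$ as a root, since $N_i(0)=0$ for $i\geq 1$; and a polynomial of degree $>1$ with nonzero constant term has at least $c\geq 1$ roots by hypothesis. Hence every nonconstant polynomial has a root.

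For the reverse direction I would assume $(L,\s)$ is $1$-algebraically closed and that every degree-$2$ polynomial with nonzero constant term has at least $c$ distinct roots, and take $P(T)\in L[T;\s]$ of degree $>1$ with nonzero constant term. First I would invoke Proposition \ref{(P)Factorizationinalgebraicallyclosed}(a) to split $P$ completely as $P(T)=a(T-a_n)\cdots(T-a_1)T^m$ with $a,a_1,\dots,a_n\in L^*$; the nonzero constant term forces $m=0$, and $\deg P>1$ forces $n\geq 2$. Then I would group the two rightmost linear factors, writing $P=SQ$ with $Q(T)=(T-a_2)(T-a_1)$ and $S(T)=a(T-a_n)\cdots(T-a_3)$ (so $S=a$ when $n=2$). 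Since $Q$ has degree $2$ and constant term $a_2a_1\neq 0$, the hypothesis yields at least $c$ distinct roots of $Q$ in $L$.

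The decisive step is to transfer these roots to $P$: by the product formula (Lemma \ref{(L)ProductPQ(a)}), $Q(r)=0$ implies $(SQ)(r)=P(r)=0$, so each of the at least $c$ distinct roots of $Q$ is a root of $P$, which gives the $c$-algebraic closedness of $(L,\s)$. I do not anticipate a genuine obstacle here: all the content lies in having the complete splitting available, after which the product formula does the work for free. The one point I would keep an eye on is that the rightmost degree-$2$ factor has nonzero constant term, which is guaranteed precisely because every $a_i$ is nonzero once $m=0$.
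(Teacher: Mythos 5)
Your proof is correct and follows essentially the same route as the paper: factor $P$ via Proposition \ref{(P)Factorizationinalgebraicallyclosed}, peel off a degree-$2$ right factor, apply the hypothesis to it, and transfer its roots to $P$ by the product formula. You are in fact slightly more careful than the paper, which does not explicitly verify that the degree-$2$ right factor has nonzero constant term (needed to invoke the hypothesis) and dismisses the forward direction as trivial; your checks of both points are correct.
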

	\begin{proof}
		To prove the "if" direction,  let $P(T)\in L[T;\s]$ be an arbitrary polynomial satisfying $\deg P>1$ and $P(0)\neq 0$. Since $(L,\s)$ is  $1$-algebraically closed, it follows from Proposition \ref{(P)Factorizationinalgebraicallyclosed}, that there are skew polynomials $P_1(T),P_2(T)\in  L[T;\s]$ such that $\deg P_2=2$ and $P(T)=P_1(T)P_2(T)$. Since any root of $P_2(T)$ is a root of $P_1(T)$, we see that $P(T)$ has at least $c$ distinct roots in $L$. The other direction is trivial. 
	\end{proof}
	
	To present the next result, we need to review some facts regarding roots of skew polynomials in  a fixed conjugacy class. For more details see \cite{LamLeroy1988, lamleroy1988algebraic}. Let $(K,\s)$ be a $\s$-field with fixed field $F$.
	Let $P(T)=\sum_{i=0}^na_iT^i\in K[T;\s]$. 
	An element $\s(x)x^{-1}$, where $x\in K^*$, is a root  of $P(T)$ iff
	$$P(\s(x)x^{-1})=0\iff \sum_{i=0}^na_i\s^i(x)x^{-1}=0\iff \sum_{i=0}^na_i\s^i(x)=0.$$
	The set $E(P,1)$ of all $x\in K$ for which $\sum_{i=0}^na_i\s^i(x)=0$ is a vector space over $F$. One can see that the assignment $[x]\mapsto \s(x)x^{-1}$ establishes a bijection between the projective space $\mathbb{P}(E(P,1))$ over $F$ and the set of roots of 
	$P(T)$ in the conjugacy class $C(1)$ containing $1$. In particular, if $F$ and $\dim_F E(P,1)$ are finite,  the number of roots of $P(T)$ in $C(1)$ is equal to
	$$\frac{|F|^{\dim_F E(P,1)}-1}{|F|-1}.$$	
	\begin{theorem}\label{(T)conjugacyclasses3-algebraic}
		
		Let $(K,\s)$ be a $2$-algebraically closed $\s$-field with fixed field $F$. Then, $K$ is $3$-algebraically closed iff $K$ has exactly two distinct $\s$-conjugacy classes. Moreover, $K$ is $3$-algebraically closed, then $(K,\s)$ is $(1+|F|)$-algebraically closed. In particular, if $K$ is $3$-algebraically closed and  $F$ is infinite,  $(K,\s)$  is $\aleph_0$-algebraically closed. 
%		$$c=\begin{cases}
%		1+|F| &\text{ if } 	|F|<\infty \text{ and } \dim_F E(P,1)<1\\
%		\end{cases}
%		$$
	\end{theorem}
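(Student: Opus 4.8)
The plan is to reduce everything to degree-$2$ polynomials and then count their roots inside a single conjugacy class using the projective description recalled just before the statement. By Proposition \ref{(P)2-algebraic}, a $2$-algebraically closed $\s$-field is $c$-algebraically closed (for any cardinal $c>1$) precisely when every $P(T)=\sum_i a_iT^i\in K[T;\s]$ with $\deg P=2$ and $P(0)\neq 0$ has at least $c$ distinct roots; since the fixed field contains $0$ and $1$ we have $|F|\geq 2$, so this applies both to $c=3$ and to $c=1+|F|$. For such a $P$ every root is nonzero, and the roots lying in the class $C(1)$ are in bijection (via $[x]\mapsto \s(x)x^{-1}$) with the projective space $\mathbb{P}(E(P,1))$ over $F$, where $E(P,1)=\{x\in K\mid \sum_i a_i\s^{i}(x)=0\}$.

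For the implication ``two conjugacy classes $\Rightarrow$ $3$-algebraically closed'' (which also yields the \emph{moreover} assertion) I would argue as follows. If $K$ has exactly two $\s$-conjugacy classes, they must be $\{0\}$ and $K^*=C(1)$, since $0$ is conjugate only to itself and $1\neq 0$. Hence, for a degree-$2$ polynomial $P$ with $P(0)\neq 0$, all roots are nonzero and so lie in $C(1)$. Because $K$ is $2$-algebraically closed, $P$ has at least two distinct roots, whence $|\mathbb{P}(E(P,1))|\geq 2$ and therefore $\dim_F E(P,1)\geq 2$ (a space of dimension $0$ or $1$ gives at most one projective point). The number of roots of $P$ then equals $|\mathbb{P}(E(P,1))|\geq 1+|F|$: when $F$ is finite this is $\tfrac{|F|^{d}-1}{|F|-1}\geq |F|+1$ for $d=\dim_F E(P,1)\geq 2$, and when $F$ is infinite $\mathbb{P}(E(P,1))$ is infinite (for instance the points $[1:t:0:\cdots:0]$, $t\in F$, are distinct), hence has at least $|F|=1+|F|$ elements. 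This shows $(K,\s)$ is $(1+|F|)$-algebraically closed, in particular $3$-algebraically closed, and $\aleph_0$-algebraically closed when $F$ is infinite.

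For the reverse implication I would apply Proposition \ref{(P)Bray_Whaples} contrapositively. Suppose $K$ has more than two conjugacy classes; then $K^*$ splits into at least two classes, so one can choose nonzero, mutually nonconjugate $a,b\in K^*$. By Proposition \ref{(P)Bray_Whaples} there is a unique monic skew polynomial $P(T)$ of degree $2$ whose only roots are $a$ and $b$. Since $0\notin\{a,b\}$ we have $P(0)\neq 0$, so $P$ is a degree-$2$ polynomial with nonzero constant term possessing exactly two roots, contradicting $3$-algebraic closedness. Hence $3$-algebraic closedness forces $K^*$ to be a single class, i.e. $K$ has exactly two conjugacy classes; combining this with the previous paragraph gives the equivalence together with the \emph{moreover} and the infinite-$F$ statements (using $3$-closed $\Rightarrow$ two classes $\Rightarrow$ $(1+|F|)$-closed).

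The routine parts are the reduction through Proposition \ref{(P)2-algebraic} and the Bray--Whaples contradiction. The step needing the most care is the counting in the forward direction: one must verify that having at least two roots forces $\dim_F E(P,1)\geq 2$ rather than merely $\geq 1$, and then establish the bound $|\mathbb{P}(E(P,1))|\geq 1+|F|$ uniformly, reading $1+|F|$ as a cardinal (so that $1+|F|=|F|$ when $F$ is infinite). No \emph{upper} bound on $\dim_F E(P,1)$ is required, since monotonicity of the projective count in the dimension already delivers the desired lower bound in every case.
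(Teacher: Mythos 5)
Your proof is correct and follows essentially the same route as the paper's: Proposition \ref{(P)Bray_Whaples} applied to two nonconjugate nonzero elements shows that more than two conjugacy classes rules out $3$-algebraic closedness, while the converse and the $(1+|F|)$ bound come from identifying the roots of $P$ with $\mathbb{P}(E(P,1))$ and using $2$-algebraic closedness to force $\dim_F E(P,1)\geq 2$. The only cosmetic differences are your explicit reduction to degree $2$ via Proposition \ref{(P)2-algebraic} and your more careful treatment of the infinite-$F$ case, both of which are harmless.
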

	\begin{proof}
		Generally speaking, any $\s$-field has at least two distinct  $\s$-conjugacy classes: the trivial conjugacy class $\{0\}$ and the conjugacy class containing $1$. If $(K,\s)$  has a nonzero element $a$ which is not conjugate to $1$, then Proposition \ref{(P)Bray_Whaples} implies that there is a skew polynomial of degree two whose roots in $K$ are exactly $1$ and $a$. Therefore, if $K$ has at least three distinct conjugacy classes, then it cannot be  $3$-algebraically closed. To prove the second statement, we assume that $K$ has  exactly two distinct $\s$-conjugacy classes. Let $P(T)\in K[T;\s]$ be a polynomial of degree $>1$ such that $P(0)\neq 0$. Since $(K,\s)$  has only one nontrivial conjugacy classes, we see that the cardinality of the set of roots of $P(T)$ is equal to the cardinality of the projective space $\mathbb{P}(E(P,1))$. Since $(K,\s)$ is  $2$-algebraically closed, we must have $ \dim_F E(P,1)>1$ form which it follows that the cardinality of $\mathbb{P}(E(P,1))$ is at least $\geq 1+|F|$, and we are done. 
		
	\end{proof}
	We remark that a $\s$-field has exactly two distinct $\s$-conjugacy classes iff the map $x\mapsto \s(x)x^{-1}$ is a bijection of $K^*$. As an application, we have the following:
	\begin{corollary}
		Let $n>2$ be a natural number. If $(K,\s)$ is an $n$-algebraically closed $\s$-field which is not $(n+1)$-algebraically closed, then the characteristic $p$ of $K$ is nonzero, and moreover, $n$ is of the form
		$$\frac{p^{mk}-1}{p^k-1},$$
		for some natural numbers $m,k$ with $m>1$. 
	\end{corollary}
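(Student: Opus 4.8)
The plan is to reduce the whole statement to the root-counting formula for a single conjugacy class stated just before Theorem \ref{(T)conjugacyclasses3-algebraic}, together with the dichotomy that theorem provides. First I would note that since $n>2$ we have $n\geq 3$, so an $n$-algebraically closed $\s$-field is automatically both $2$- and $3$-algebraically closed, because a $c'$-algebraically closed $\s$-field is $c$-algebraically closed whenever $c\leq c'$. Regarding $(K,\s)$ as a $2$-algebraically closed $\s$-field and applying Theorem \ref{(T)conjugacyclasses3-algebraic}, the fact that $K$ is also $3$-algebraically closed forces $K$ to have exactly two distinct $\s$-conjugacy classes, namely the trivial class $\{0\}$ and the class $C(1)$ of $1$.

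Next I would pin down that the fixed field $F$ is finite. By the ``moreover'' clause of Theorem \ref{(T)conjugacyclasses3-algebraic}, if $F$ were infinite then the $3$-algebraically closed field $K$ would be $\aleph_0$-algebraically closed; but that would give every admissible polynomial at least $\aleph_0>n+1$ distinct roots, making $K$ in particular $(n+1)$-algebraically closed and contradicting the hypothesis. Hence $F$ is a finite field, so $|F|=q=p^{k}$ for a prime $p$ and some $k\geq 1$, and since the prime subfield of $K$ is contained in $F$, the characteristic of $K$ is $p\neq 0$.

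The core is then a counting step. Because $K$ is not $(n+1)$-algebraically closed, there is a polynomial $Q(T)\in K[T;\s]$ of degree $>1$ with $Q(0)\neq 0$ having fewer than $n+1$, hence at most $n$, distinct roots; because $K$ is $n$-algebraically closed, $Q$ has at least $n$ distinct roots. Thus $Q$ has exactly $n$ roots, in particular finitely many. Since $Q(0)\neq 0$, no root lies in $\{0\}$, so all $n$ roots lie in $C(1)$. Via the bijection $[x]\mapsto\s(x)x^{-1}$ between $\mathbb{P}(E(Q,1))$ and the roots of $Q$ in $C(1)$, finiteness of the root set forces $d:=\dim_F E(Q,1)$ to be finite, and the root-counting formula yields
$$n=\frac{q^{d}-1}{q-1}=\frac{p^{kd}-1}{p^{k}-1}.$$
Putting $m=d$ gives the asserted form, and $m>1$ follows since $n>2$ excludes $d=0$ and $d=1$, which would give $0$ and $1$ roots respectively.

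The only place demanding care, which I regard as the main obstacle, is the bookkeeping that converts the two inequalities ``at least $n$'' and ``at most $n$'' into the exact equality $n=\frac{q^{d}-1}{q-1}$ with $d$ finite: one must use non-$(n+1)$-closedness to produce a witness $Q$ with \emph{finitely} many roots, use $n$-closedness to fix the count at $n$, and invoke the two-conjugacy-class structure to ensure this count is precisely the value of the projective-space formula in a single class rather than a sum spread over several classes.
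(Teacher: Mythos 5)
Your proof is correct and follows essentially the same route as the paper's: use Theorem \ref{(T)conjugacyclasses3-algebraic} to force the fixed field $F$ to be finite (via the $\aleph_0$-closedness dichotomy), extract a witness polynomial with exactly $n$ roots from the combination of $n$-closedness and non-$(n+1)$-closedness, and apply the projective-space root-count $\frac{|F|^{\dim_F E(P,1)}-1}{|F|-1}$ in the single nontrivial conjugacy class. Your write-up is in fact slightly more careful than the paper's, since you make explicit both the reduction of all roots to $C(1)$ and the verification that $m>1$.
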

	\begin{proof}
		Since $(K,\s)$ is $3$-algebraically closed, it follows from  Theorem \ref{(T)conjugacyclasses3-algebraic} that the fixed field $F$ of $K$ is finite. In particular, the characteristic $p$ of $K$ is nonzero, and $|F|=p^k$ for some natural number $k$. Since  $(K,\s)$ is not $(n+1)$-algebraically closed, there exists a nonconstant polynomial $P(T)\in K[T;\s^n]$ of degree $>1$ such that $P(0)\neq 0$ and $P(T)$ has exactly $n$ distinct roots. Since the number of roots of $P(T)$ is equal to 
		$$\frac{|F|^{\dim_F E(P,1)}-1}{|F|-1}=\frac{p^{k\dim_F E(P,1)}-1}{p^k-1},$$
		the result follows.    		
	\end{proof}
	Another fact regarding algebraically closed $\s$-fields is given in the following proposition.
	\begin{proposition}
		Let $(K,\s)$  be a nontrivial $\s$-field, that is, $\s\neq 1_K$. If $(K,\s)$ is $1$-algebraically closed, then $\s$ is of infinite order. 
	\end{proposition}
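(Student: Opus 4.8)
The plan is to argue by contraposition: assuming $\s$ has finite order, I would exhibit a single nonconstant skew polynomial with no root in $K$, which directly contradicts $1$-algebraic closedness. So suppose $\s^n=1_K$ for some minimal $n$. Since $\s\neq 1_K$ we have $n\geq 2$, and in particular $\s$ is an automorphism whose fixed field $F=\{c\in K\,|\,\s(c)=c\}$ is strictly smaller than $K$ (as $\s\neq 1_K$ forces some $a$ with $\s(a)\neq a$).

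The key observation is that the map $N_n:K\to K$ takes all its values in $F$. Using $N_n(a)=\s^{n-1}(a)\cdots\s(a)a=\prod_{i=0}^{n-1}\s^i(a)$ together with the commutativity of $K$, applying $\s$ to this product merely cyclically permutes the factors $\s^i(a)$, because $\s^n(a)=a$; hence $\s(N_n(a))=N_n(a)$, i.e. $N_n(a)\in F$ for every $a\in K$. Conceptually, $N_n$ is just the norm $N_{K/F}$ down to the fixed field, and recognizing this is the one step that really drives the whole argument.

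With this in hand I would fix any $c\in K\setminus F$ and consider $P(T)=T^n-c\in K[T;\s]$. Evaluating via the formula $P(a)=\sum_i a_i N_i(a)$ gives $P(a)=N_n(a)-c$ for every $a\in K$. Since $N_n(a)\in F$ while $c\notin F$, we obtain $P(a)\neq 0$ for all $a\in K$, so the nonconstant polynomial $P(T)$ (of degree $n\geq 2$) has no root in $K$, contradicting the hypothesis that $(K,\s)$ is $1$-algebraically closed. The main obstacle here is not computational at all: the only genuine content is spotting that $N_n$ is $F$-valued, after which the evaluation $P(a)=N_n(a)-c$ is immediate and the contradiction follows in one line.
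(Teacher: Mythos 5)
Your argument is correct and is essentially the paper's own proof: both assume $\s$ has finite order $n$, observe that $N_n$ is the norm to the fixed field $F$ (hence $F$-valued), and conclude that $T^n-c$ has no root for $c\notin F$, contradicting $1$-algebraic closedness. The only cosmetic difference is that you verify the $F$-valuedness of $N_n$ directly via the cyclic-permutation computation, while the paper simply identifies $N_n$ with $N_{K/F}$.
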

	\begin{proof}
		Assume on the contrary that $\s$ is of finite order, say order $n$. Consider the skew polynomial $P_a(T)=T^n-a\in K[T;\s]$ where $a\in K$. An element $x\in K$ is a root of $P_a(T)$ iff 
		$$N_n(x)=\s^{n-1}(x)\cdots \s(x)x=a.$$
		Since $\s^n=1_K$, we see that $N_n(x)=N_{K/F}(x)$ is the norm of $x$ over the fixed field $F$ of $K$. In particular, we must have $a\in F$ if $P_a(T)$ has a root in $K$. Since $(K,\s)$ is $1$-algebraically closed, we conclude that $K=F$, contradicting the assumption that $\s\neq 1_K$.    
	\end{proof}		
	In the next section, we give examples of  $n$-algebraically closed $\s$-fields.
	
\end{subsection}
%%%%%%%%%%%%%%%%%%%%%%%%%%%%%%%%%%%%%%%%%%%%%%%%%%%%%%%%%%%%%%%%%  

\begin{subsection}{Examples of algebraically closed $\s$-fields} 
	
	We begin with a trivial example. Let $K$ be an (ordinary) algebraically closed field. It is easy to see that the trivial $\s$-field $(K,1_K)$ is a $1$-algebraically closed $\s$-field. However, it is not $2$-algebraically closed because the skew polynomial $T^2-2T+1\in K[T;1_K]$ has  exactly one root in $K$.
	
	To give a nontrivial example, we let $K$ be an (ordinary) algebraically closed field of characteristic $p>0$. Let $\s:K\to K$ be the Frobenius homomorphism, that is, $\s(a)=a^p$ for all $a\in K$. Given a natural number $n$, we obtain the $\s$-field $(K,\s^n)$. It is known that the fixed field of $(K,\s^n)$ is the finite field $\mathbb{F}_{p^n}$ with $p^n$ elements.  Working over $(K,\s^n)$, one can easily check that
	$$N_i(a)=a^{\frac{p^{in}-1}{p^n-1}},$$
	for all $a\in K$ and natural numbers $i\geq 1$. Therefore, given a skew polynomial $P(T)=\sum_{i=0}^ma_iT^i\in K[T;\s^n]$ and $a\in K$, we obtain
	$$P(a)=\sum_{i=0}^ma_iN_i(a)=\sum_{i=0}^ma_ia^{\frac{p^{in}-1}{p^n-1}}.$$
	Setting 
	$$f(x)=\sum_{i=0}^ma_ix^{\frac{p^{in}-1}{p^n-1}}\in K[x],$$
	we see that $P(a)=f(a)$ for all $a\in K$. Since $K$ is algebraically closed, we conclude that  $(K,\s^n)$ is $1$-algebraically closed. In fact, more can be said. Assume that $P(0)\neq 0$. Then, $f(x)$ is separable. Since the degree of $f$ is  
	$$\frac{p^{nm}-1}{p^n-1}$$
	where $m$ is the degree of $P(T)$, we see that  the number of distinct roots of the skew polynomial $P(T)$ is
	$$\frac{p^{nm}-1}{p^n-1}.$$
	In particular, we see that $(K,\s^n)$ is $(p^n+1)$-algebraically closed, but not $(p^n+2)$-algebraically closed, since if $\deg P=2$, then $P(T)$ has exactly $p^n+1$ distinct roots.

\end{subsection}
%%%%%%%%%%%%%%%%%%%%%%%%%%%%%%%%%%%%%%%%%%%%%%%%%%%%%%%%%%%%%%%%%  
\begin{subsection}{$c$-Algebraically closed inversive $\s$-fields} 
	In the preceding part, we proved the existence of various types of algebraically closed $\s$-field. Here, we prove the existence of $c$-algebraically closed inversive $\s$-fields. Recall that a $\s$-field   $(K,\s)$ is called inversive if $\s$ is an automorphism. 
	\begin{proposition}\label{(P)c-algebraicallyclosedinversive}
		
		Let $c$ be a cardinal number and $(K,\sigma)$ be $c$-algebraically closed $\s$-field. Then, the inversive closure of $(K,\sigma)$ is  $c$-algebraically closed.
		
	\end{proposition}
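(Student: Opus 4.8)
The plan is to reduce the assertion about the inversive closure $(U,\s)$ directly to the assumed $c$-algebraic closedness of $(K,\s)$, exploiting the two structural features of the universal inversive closure recalled in item (7) of Section 3: that $\s$ is an automorphism of $U$, and that every $u\in U$ satisfies $\s^m(u)\in K$ for some $m\geq 1$. First I would fix an arbitrary skew polynomial $P(T)=\sum_{i=0}^{d}a_iT^i\in U[T;\s]$ with $d>1$, $a_d\neq 0$ and $a_0\neq 0$, and choose a single $m\geq 1$ large enough that $\s^m(a_i)\in K$ for all $i$; this is possible because there are only finitely many coefficients and each has some power of $\s$ landing in $K$.

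The key computational step is the observation that the norm maps commute with $\s$, namely $\s(N_i(b))=N_i(\s(b))$ for all $b\in U$ and $i\geq 0$, which is immediate from the explicit formula $N_i(b)=\s^{i-1}(b)\cdots\s(b)\,b$. Applying $\s$ to the defining identity $P(b)=\sum_i a_i N_i(b)$ then gives $\s(P(b))=(\s P)(\s(b))$, where $\s P:=\sum_i \s(a_i)T^i$, and iterating yields $\s^m(P(b))=(\s^m P)(\s^m(b))$. Because $\s^m$ is a bijection of $U$, the assignment $b\mapsto \s^m(b)$ is then a bijection from the set of roots of $P$ in $U$ onto the set of roots of $\s^m P$ in $U$.

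Now $\s^m P$ lies in $K[T;\s]$ by the choice of $m$, has degree $d>1$ (its leading coefficient $\s^m(a_d)$ is nonzero since $\s^m$ is injective), and has nonzero constant term $\s^m(a_0)$. Since $(K,\s)$ is $c$-algebraically closed, $\s^m P$ has at least $c$ distinct roots in $K$, and these are a fortiori distinct roots in $U$, as $K$ is a $\s$-subfield of $U$. Pulling them back along the bijection $b\mapsto \s^m(b)$ (equivalently, applying $\s^{-m}$, which is defined on $U$ precisely because $\s$ is an automorphism there) produces at least $c$ distinct roots of $P$ in $U$, which is exactly what is required.

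I do not anticipate a serious obstacle: the argument is essentially a transport of the root sets across powers of $\s$. The only point demanding care is the bookkeeping around this bijection—one must verify that $\s^m$, and hence $\s^{-m}$, is genuinely defined on all of $U$ and injective there, so that distinctness of roots is preserved in both directions, and that passing from roots in $K$ to roots in $U$ does not erode the lower bound $c$. All of these follow cleanly from $\s$ being an automorphism of the inversive closure.
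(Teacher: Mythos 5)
Your proof is correct and follows essentially the same route as the paper: both pass to the polynomial $Q(T)=\sum_i \s^m(a_i)T^i\in K[T;\s]$ and transport its roots back via $\s^{-m}$. Your explicit verification of the identity $\s^m(P(b))=(\s^m P)(\s^m(b))$ via $\s(N_i(b))=N_i(\s(b))$ merely spells out the step the paper labels ``easy to see.''
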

	\begin{proof}
		Let $(L,\sigma)$ be the inversive closure of $(K,\sigma)$. Assume that a polynomial $$P(T)=\sum_{i=0}^na_iT^i\in L[T;\s],$$ satisfies $n>1$ and $a_0\neq 0$.
		Since $(L,\sigma)$  is the inversive closure of $(K,\sigma)$, 
		there exists $N\in \mathbb{N}$ such that 
		$\s^N(a_i)\in K$ for all $i=0,1,...,n$. Then, the skew polynomial
		$$Q(T)=\sum_{i=0}^n\s^N(a_i)T^i,$$
		belongs to $ K[T;\s]$.  It is easy to see that if $a\in L$ is a root of $Q(T)$, then $\s^{-N}(a)$ is a root of $P(T)$. Since $Q(T)$ has at least $c$ distinct roots in $K$, we conclude that $P(T)$ has   at least  $c$ distinct roots in $L$. This completes the proof of the proposition. 
	\end{proof}
	As a consequence of this proposition, we have the following result:
	\begin{corollary}\label{(T)embed$c$-algebraicallyclosedinversive}
		
		Let $c$ be a cardinal number. Any $\s$-field $(K,\sigma)$ can be embedded in  a $c$-algebraically closed inversive $\s$-field.
		
	\end{corollary}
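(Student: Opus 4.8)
The plan is to obtain this statement as an immediate composition of the two preceding results, so that no genuinely new argument is required. First I would invoke Theorem \ref{(T)$c$-algebraically closed} to embed the given $\s$-field $(K,\s)$ as a $\s$-subfield of some $c$-algebraically closed $\s$-field $(M,\s)$. This $(M,\s)$ need not be inversive, so the remaining work is to repair that defect without destroying $c$-algebraic closedness.

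Next I would form the universal inversive closure $(L,\s)$ of $(M,\s)$, whose existence and basic properties are recorded in item (7) of the difference-ring preliminaries. In particular $(L,\s)$ is again a $\s$-field containing $(M,\s)$ as a $\s$-subfield, and $\s:L\to L$ is onto, so $(L,\s)$ is inversive. Applying Proposition \ref{(P)c-algebraicallyclosedinversive} to the $c$-algebraically closed $\s$-field $(M,\s)$ then shows that its inversive closure $(L,\s)$ is still $c$-algebraically closed.

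Finally I would chain the inclusions $(K,\s)\subset (M,\s)\subset (L,\s)$ to conclude that $(K,\s)$ embeds in the $c$-algebraically closed inversive $\s$-field $(L,\s)$, which is precisely the assertion. Since each step merely quotes an already-established fact, there is no substantive obstacle; the only points needing a line of verification are that the inversive closure of a $\s$-field is itself a $\s$-field, so that Proposition \ref{(P)c-algebraicallyclosedinversive} is applicable, and that the embedding $(K,\s)\hookrightarrow (M,\s)$ composes with the canonical embedding $(M,\s)\hookrightarrow (L,\s)$ to give a single $\s$-embedding. Both of these are immediate from the cited preliminaries, so the corollary follows at once.
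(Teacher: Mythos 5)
Your proposal is correct and follows exactly the paper's own argument: embed $(K,\s)$ in a $c$-algebraically closed $\s$-field via Theorem \ref{(T)$c$-algebraically closed}, pass to its inversive closure, and apply Proposition \ref{(P)c-algebraicallyclosedinversive} to see that $c$-algebraic closedness is preserved. No differences worth noting.
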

	\begin{proof}
		By Theorem \ref{(T)$c$-algebraically closed}, $(K,\s)$ can be embedded in a  $c$-algebraically closed  $\s$-field $(L,\s)$. Then, by Proposition \ref{(P)c-algebraicallyclosedinversive}, the inversive closure of $(L,\s)$ is  $c$-algebraically closed. Since  the inversive closure of $(L,\s)$ contains $(K,\s)$ as a $\s$-subfield, the result follows. 
	\end{proof}

\end{subsection}

%%%%%%%%%%%%%%%%%%%%%%%%%%%%%%%%%%%%%%%%%%%%%%%%%%%%%%%%%%%%%%%%% 
%%%%%%%%%%%%%%%%%%%%%%%%%%%%%%%%%%%%%%%%%%%%%%%%%%%%%%%%%%%%%%%%
%%%%%%%%%%%%%%%%%%%%%%%%%%%%%%%%%%%%%%%%%%%%%%%%%%%%%%%%%%%%%%%%
\end{section} 
%%%%%%%%%%%%%%%%%%%%%%%%%%%%%%%%%%%%%%%%%%%%%%%%%%%%%%%%%%%%%%%%
%%%%%%%%%%%%%%%%%%%%%%%%%%%%%%%%%%%%%%%%%%%%%%%%%%%%%%%%%%%%%%%%
%%%%%%%%%%%%%%%%%%%%%%%%%%%%%%%%%%%%%%%%%%%%%%%%%%%%%%%%%%%%%%%%
%%%%%%%%%%%%%%%%%%%%%%%%%%%%%%%%%%%%%%%%%%%%%%%%%%%%%%%%%%%%%%%%
%%%%%%%%%%%%%%%%%%%%%%%%%%%%%%%%%%%%%%%%%%%%%%%%%%%%%%%%%%%%%%%%
%%%%%%%%%%%%%%%%%%%%%%%%%%%%%%%%%%%%%%%%%%%%%%%%%%%%%%%%%%%%%%%%
\begin{section}{Partial fraction decomposition}
	
	In this section, we present a version of a partial fraction decomposition for  the field of fractions of the ring of skew polynomials. We begin with some standard definitions and facts regarding factorization in  (not necessarily commutative) principal ideal domains, see \cite{cohn1963noncommutative,cohn1973unique} for more details. Let $R$ be a (not necessarily commutative) principal ideal domain (or PID for short). A nonunit $a\in R$ is called irreducible if $a=bc$, where $b,c\in R$, implies that $b$ or $c$ is a unit in $R$.
	Elements $a,b\in R$ are called similar if the left modules $R/Ra$ and $R/Rb$ are isomorphic, or equivalently, the right modules $R/aR$ and $R/bR$ are isomorphic. Elements $a,b\in R$ are called left (resp., right) co-prime if $aR+bR=R$ (reps., $Ra+Rb=R$). They are called co-prime if they are both left and right co-prime. An element $a\in R$ is called right decomposable if $a=b_1c_1=b_2c_2$ for some nonunits $b_1,b_2,c_1,c_2\in R$ such that $Rc_1+Rc_2=R$. An element is called right indecomposable if it is not right decomposable. The notion of left decomposable (indecomposable) is defined similarly.
	 It is known that any PID $R$ is a UFD, that is, every nonunit $a\in R$ can be written as
	$$a=a_1a_2\cdots a_n,$$ where $a_1,...,a_n$ are irreducible, and moreover, if $a=b_1\cdots b_m$, where $b_1,...,b_m$ are irreducible, then $m=n$, and there is a permutation $\tau\in S_n$ such that each $b_i$ is similar to $a_{\tau(i)}$.

	%%%%%%%%%%%%%%%%%%%%%%%%%%%%%%%%%%%%%%%%%%%%%%%%%%%%%%%%%%%%%%%%
	%%%%%%%%%%%%%%%%%%%%%%%%%%%%%%%%%%%%%%%%%%%%%%%%%%%%%%%%%%%%%%%%
	\begin{subsection}{Partial fraction decomposition in a PID}
		
		In this subsection, $R$ is a fixed PID. It is known that $R$ is an Ore domain, hence has a field of fractions $F=Frac(R)$. A fraction $ab^{-1}\in F$, where $a,b\in R$, is called \textit{reduced} if $a$ and $b$ are right co-prime, that is, $Ra+Rb=R$. 
		\begin{lemma}
			Any $x\in F$ has at least  one reduced fraction $x=ab^{-1}$ which is unique up to multiplication by units. More precisely, if $ab^{-1}=a_1b_1^{-1}$ are two reduced fractions, then $a_1=au$ and $b_1=bu$ for some unit $u\in R$.  More generally, if $ab^{-1}=a_1b_1^{-1}$  and $ab^{-1}$ is reduced, then there exists $r\in R$ such that  $a_1=ar$ and $b_1=br$
		\end{lemma}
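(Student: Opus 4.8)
The plan is to treat the three assertions in sequence---existence of a reduced fraction, then the general divisibility statement, and finally uniqueness up to units as a special case of it. Throughout I would use that $R$, being a PID, is both a left and a right Ore domain, so that every element of $F=Frac(R)$ has a right-fraction representation $ab^{-1}$ with $a,b\in R$ and $b\neq 0$, and that $R$ is a domain, so left and right cancellation are available.

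For existence I would start from any right fraction $x=a_0b_0^{-1}$ and pass to a greatest common right divisor. Since $R$ is a left PID, the left ideal $Ra_0+Rb_0$ is principal, say $Ra_0+Rb_0=Rd$ with $d\neq 0$ (as $b_0\neq 0$ lies in $Rd$). Writing $a_0=ad$ and $b_0=bd$ gives $x=a_0b_0^{-1}=(ad)(bd)^{-1}=ab^{-1}$. Because $d\in Rd$ there are $p,q\in R$ with $d=pa_0+qb_0=(pa+qb)d$, and cancelling $d$ on the right yields $pa+qb=1$, i.e. $Ra+Rb=R$. Thus $ab^{-1}$ is a reduced fraction representing $x$.

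For the general statement, I would first record the criterion for equality of right fractions: $ab^{-1}=a_1b_1^{-1}$ holds in $F$ iff there exist $c,c_1\in R$ with $bc=b_1c_1\neq 0$ and $ac=a_1c_1$, obtained by passing to a common right multiple of the two denominators via the right Ore condition. This bookkeeping step is where I expect the main work to lie, both in choosing the common right multiple nonzero and in tracking the resulting cofactors. Assuming $ab^{-1}$ reduced, I fix $p,q$ with $pa+qb=1$ and multiply on the right by $c$, obtaining $c=pac+qbc=(pa_1+qb_1)c_1$ after substituting $ac=a_1c_1$ and $bc=b_1c_1$. Setting $r=pa_1+qb_1$ this reads $c=rc_1$. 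Substituting back, $ac=a_1c_1$ becomes $arc_1=a_1c_1$ and $bc=b_1c_1$ becomes $brc_1=b_1c_1$; since $c_1\neq 0$ and $R$ is a domain, right cancellation of $c_1$ gives $a_1=ar$ and $b_1=br$, as required.

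Finally, for uniqueness up to units, suppose $ab^{-1}$ and $a_1b_1^{-1}$ are both reduced and equal. Applying the general statement twice, once to each reduced fraction, produces $r,s\in R$ with $a_1=ar,\ b_1=br$ and $a=a_1s,\ b=b_1s$. Then $b_1=br=b_1sr$ and $b=b_1s=brs$ with $b,b_1\neq 0$, so right cancellation gives $sr=1$ and $rs=1$; hence $r$ is a two-sided unit and $u=r$ satisfies $a_1=au$ and $b_1=bu$. The only genuinely delicate point is the equality criterion for right fractions used in the general statement and the consistent verification that every denominator cancelled is nonzero.
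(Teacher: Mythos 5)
Your proposal is correct and follows essentially the same route as the paper's proof: existence via the principal left ideal $Ra+Rb=Rd$ and cancellation of the common right factor, and the divisibility statement via a common right multiple of the denominators combined with the Bezout relation $pa+qb=1$, yielding $r=pa_1+qb_1$. You additionally spell out two small steps the paper leaves implicit (the verification that the quotients are right co-prime in the existence part, and the deduction of the unit $u$ from two applications of the general statement), but the underlying argument is the same.
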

		\begin{proof}
			Let $x=ab^{-1}$ be an arbitrary element of $F$. We have $Ra+Rb=Rc$ for some $c\in R$ because $R$ is a PID. It follows that 
			$a=a_1c$ and $b=b_1c$ for some $a_1,b_1\in R$.
			Therefore, $Ra_1+Rb_1=R$ which implies that $x=a_1b_1^{-1}$ is a reduced fraction. This proves the existence part. 
			Now, let $ab^{-1}=a_1b_1^{-1}$, where $ab^{-1}$ is reduced, that is,
			$xa+yb=1$
			for some $x,y\in R$. Since $R$ is an Ore domain, $bR\cap b_1R\neq\{0\}$ from which it follows that there exist $c,c_1\in R^*$ such that $bc=b_1c_1$. We have
			$$ ab^{-1}=a_1b_1^{-1}\implies (ab^{-1})(bc)=(a_1b_1^{-1})(b_1c_1)\implies ac=a_1c_1.$$
			Therefore, we obtain
			$$c=(xa+yb)c=xac+ybc=xa_1c_1+yb_1c_1=(xa_1+yb_1)c_1.$$ Setting  $r=xa_1+yb_1$,  we obtain
			$$a_1c_1=ac=arc_1\implies a_1 = a r.$$
			In a similar way, one can show that $b_1=br$. This completes the proof of the lemma.   
		\end{proof}
		The following result can be regarded as a generalization of the method of partial fraction decomposition,  see \cite{lang2012algebra} for the case of a commutative prinicipal ideal domain.  
		\begin{theorem}\label{(T)Partialfractiondecomposition}
			Every fraction of $R$ can be be written as a sum of fractions of the form $ab^{-1}$, where $a,b\in R$  with $0\neq b$ being right indecomposable. 
		\end{theorem}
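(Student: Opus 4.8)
The plan is to induct on the length $\ell(b)$ of the denominator, where $\ell(b)$ denotes the number of irreducible factors appearing in a factorization of $b$ into irreducibles. This number is well-defined precisely because $R$ is a UFD, and it is additive, $\ell(uv)=\ell(u)+\ell(v)$, with units assigned length $0$. By the preceding lemma, every element $x$ of $F=Frac(R)$ can be written as a right fraction $x=ab^{-1}$ with $a,b\in R$ and $b\neq 0$, so it suffices to establish the decomposition for an arbitrary such $ab^{-1}$, which I will do by induction on $\ell(b)$. If $b$ is right indecomposable then $x=ab^{-1}$ is already of the required form and there is nothing to prove; this covers the base case $\ell(b)=0$, since a unit cannot be written as a product of nonunits in a domain and is therefore vacuously right indecomposable, as are all irreducibles.

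The key step is the splitting itself. Suppose $b$ is right decomposable, say $b=b_1c_1=b_2c_2$ with $b_1,b_2,c_1,c_2$ nonunits and $Rc_1+Rc_2=R$. The comaximality $Rc_1+Rc_2=R$ yields $p,q\in R$ with $pc_1+qc_2=1$. Working in the skew field $F$, the relation $b=b_ic_i$ gives $b^{-1}=c_i^{-1}b_i^{-1}$, hence $c_ib^{-1}=b_i^{-1}$ for $i=1,2$. Multiplying the identity $1=pc_1+qc_2$ on the right by $b^{-1}$ therefore produces
\[
b^{-1}=pc_1b^{-1}+qc_2b^{-1}=pb_1^{-1}+qb_2^{-1},
\]
and consequently
\[
x=ab^{-1}=(ap)b_1^{-1}+(aq)b_2^{-1}.
\]

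Since $c_1,c_2$ are nonunits, $\ell(c_i)\geq 1$, so $\ell(b_i)=\ell(b)-\ell(c_i)<\ell(b)$ for $i=1,2$; thus the inductive hypothesis applies to each of $(ap)b_1^{-1}$ and $(aq)b_2^{-1}$, writing each as a sum of fractions with right indecomposable denominators, and adding these gives the desired decomposition of $x$. This closes the induction. I do not expect a deep obstacle here: the argument is a direct transcription of the commutative partial-fraction step, and the only points demanding genuine care are the reversal of factor order when inverting a product in the noncommutative field $F$ (which is exactly what makes $c_ib^{-1}=b_i^{-1}$ come out correctly) and the verification that $\ell$ is additive and hence a legitimate well-founded induction parameter. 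Both are routine, so the substance of the proof is simply organizing the induction around the decomposability hypothesis rather than overcoming a hard technical hurdle.
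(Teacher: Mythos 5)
Your proof is correct and follows essentially the same route as the paper: the central step---splitting $ab^{-1}$ as $a_1b_1^{-1}+a_2b_2^{-1}$ via a right decomposition $b=b_1c_1=b_2c_2$ with $Rc_1+Rc_2=R$---is identical. The only difference is bookkeeping: you induct on the number of irreducible factors of the denominator, whereas the paper runs a maximal-counterexample argument on the principal ideals generated by the denominators; both are legitimate well-founded inductions resting on the same finiteness properties of $R$.
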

		\begin{proof}	
			Let $S$ be the set of all fractions that cannot be written in the stated form. We need to show that $S$ is empty.  Assume, on the contrary, that $S$ is nonempty. Consider the set $I$ of all ideals of the form $Rb$ where   $ab^{-1}\in S$ for some nonunits $a\in R$ and $b\in R^*$. Since $R$ is a PID, $I$ has a maximal element $Rb_0$ with respect to inclusion. Then $ab_0^{-1}\in S$ for some $a\in R$. Note that $b_0$ cannot be right indecomposable since otherwise $ab_0^{-1}\notin S$.   So, $b_0$ must be right decomposable, that is, we have $b_0=b_1c_1=b_2c_2$ for some $b_1,c_1,b_2,c_2\in R$ such that $Rc_1+Rc_2=R$. In particular, there exist $a_1,a_2\in R$ such that
			$a_1c_1+a_2c_2=a$. Moreover, $b_0R\subsetneq b_1R$ and $b_0R\subsetneq b_2R$ since $c_1$ and $c_2$ are non-units.  Now, we have
			$$a=a_1c_1+a_2c_2\implies ab_0^{-1}=a_1b_1^{-1}+a_2b_2^{-1}.$$
			By the maximality of $Rb_0$, neither $a_1b_1^{-1}$ nor $a_2b_2^{-1}$ can belong to $ S$.  Therefore, both $a_1b_1^{-1}$ and $a_2b_2^{-1}$ can be written in the desired form. Consequently, $ab^{-1}$ which is the sum of these two fractions can be written in the desired form, a contradiction. This complete the proof. 
		\end{proof}
		
	\end{subsection}
	
	%%%%%%%%%%%%%%%%%%%%%%%%%%%%%%%%%%%%%%%%%%%%%%%%%%%%%%%%%%%%%%%%
	\begin{subsection}{Partial fraction decomposition in $K(T;\sigma)$}
		Let $(K,\s)$ be an inversive $\s$-field. It is known that $K[T;\sigma]$ is a PID. Its field of fractions is denoted by $K(T;\s)$. In particular, we can apply Theorem \ref{(T)Partialfractiondecomposition} to $K[T;\sigma]$. In fact, we have the following stronger result in the case of $K(T;\s)$.
		\begin{theorem}\label{(T)Partialfractiondecompositioninsigmafields}
			Every fraction $P(T)Q(T)^{-1}\in K(T;\s)$ can be written as
			$$P(T)Q(T)^{-1}=P_0(T)+\sum_{i=1}^m P_i(T)Q_i(T)^{-1},$$
			where $P_0(T),P_1(T),Q_1(T),...,P_n(T),Q_n(T)\in K[T;\sigma]$, $\deg P_i<\deg Q_i$ for every $i$, each $Q_i(T)$ is right indecomposable, and each fraction $P_i(T)Q_i(T)^{-1}$ is reduced. 
		\end{theorem}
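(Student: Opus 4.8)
The plan is to build on Theorem~\ref{(T)Partialfractiondecomposition} and then exploit the extra structure carried by $K[T;\s]$ — the degree function and Euclidean right division — to upgrade its crude decomposition into one with proper, reduced fractions. First I would dispose of the trivial case: if $\deg Q=0$ then $Q$ is a unit and $P(T)Q(T)^{-1}\in K[T;\s]$, so one takes $P_0=PQ^{-1}$ and $m=0$. Assuming $\deg Q\ge 1$, applying Theorem~\ref{(T)Partialfractiondecomposition} to the PID $R=K[T;\s]$ writes
\[
P(T)Q(T)^{-1}=\sum_{j=1}^{N} a_j b_j^{-1},
\]
with each $b_j\in K[T;\s]^{*}$ right indecomposable, but with no control on degrees or on co-primality.

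Next I would make each summand proper. For each $j$, Euclidean right division in $K[T;\s]$ gives $a_j=s_j b_j+r_j$ with $\deg r_j<\deg b_j$, so that $a_j b_j^{-1}=s_j+r_j b_j^{-1}$. Collecting the polynomial parts into $P_0=\sum_j s_j\in K[T;\s]$, I obtain
\[
P(T)Q(T)^{-1}=P_0+\sum_{j}r_j b_j^{-1},
\]
where now $\deg r_j<\deg b_j$ and each $b_j$ is still right indecomposable.

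Finally I would reduce each fraction $r_j b_j^{-1}$ using the lemma on reduced fractions preceding Theorem~\ref{(T)Partialfractiondecomposition}: there are $u_j,v_j,c_j$ with $r_j b_j^{-1}=u_j v_j^{-1}$ reduced and $r_j=u_j c_j$, $b_j=v_j c_j$. Since the degree is additive under multiplication, $\deg u_j=\deg r_j-\deg c_j<\deg b_j-\deg c_j=\deg v_j$, so properness survives; in particular, if $v_j$ is a unit then $u_j=0$ and the term vanishes (or is polynomial) and is absorbed into $P_0$. Discarding such terms and relabelling yields the asserted $P_0+\sum_{i=1}^{m}P_iQ_i^{-1}$ with $\deg P_i<\deg Q_i$ and each fraction reduced, \emph{provided} each new denominator $v_j$ is again right indecomposable.

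That last proviso is the crux, and I expect it to be the main obstacle: since $v_j$ is only a \emph{left} factor of the right-indecomposable $b_j$ (as $b_j=v_j c_j$), the naive attempt to transfer a decomposition of $v_j$ back to $b_j$ fails, because a co-prime splitting of $v_j$ need not remain co-prime after right multiplication by $c_j$. I would resolve it through the characterisation that an element $b$ is right indecomposable iff its cyclic module $R/Rb$ is indecomposable iff $b$ is \emph{primary}, meaning that in a complete factorisation of $b$ all irreducible factors are mutually similar; the forward direction uses that two non-similar irreducible factors would, via the unique factorisation property of the PID and a Chinese-remainder splitting, decompose the cyclic module $R/Rb$, while the converse uses that pairwise-similar factors admit no co-prime non-unit splitting. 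Granting this, concatenating complete factorisations of $v_j$ and of $c_j$ gives one of $b_j$, so by uniqueness up to similarity and permutation the irreducible factors of $v_j$ lie among those of $b_j$; being mutually similar, they make $v_j$ primary, hence right indecomposable. Once this preservation lemma is in place the remaining bookkeeping is routine.
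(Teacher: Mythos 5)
Your overall strategy is the same as the paper's: start from Theorem \ref{(T)Partialfractiondecomposition}, use Euclidean right division to make each summand proper (absorbing the quotients into $P_0$), and then pass to reduced fractions via the preceding lemma. The paper packages these last two steps as a single minimality argument (choose a decomposition with right indecomposable denominators minimizing $\sum_i\deg P_i$ and derive contradictions), but the content is the same, and you are right that the delicate point in either formulation is that the reduced fraction's denominator $v_j$ is only a \emph{left} divisor of the right indecomposable $b_j$, so its right indecomposability has to be argued.

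The problem is that your proposed resolution of that point rests on a false characterization. You claim that $b$ is right indecomposable iff $b$ is ``primary,'' i.e.\ all irreducible factors in a complete factorization of $b$ are mutually similar, and you need the implication ``primary $\Rightarrow$ right indecomposable'' to conclude that $v_j$ is right indecomposable. That implication fails already in the paper's standard example $\mathbb{C}[T;\,\bar{\ }\,]$: the polynomial $T^2-1$ factors as $(T+\bar z)(T-z)$ for \emph{every} $z$ on the unit circle, so all of its irreducible (linear) factors have roots in the single $\s$-conjugacy class $C(1)=\{z:|z|=1\}$ and hence are mutually similar; yet taking $c_1=T-1$ and $c_2=T-i$ gives $T^2-1=(T+1)c_1=(T-i)c_2$ with all four factors nonunits and $Rc_1+Rc_2\ni i-1$ a unit, so $T^2-1$ is right decomposable. (The first equivalence in your chain is also suspect: the paper's notion of right decomposable only requires $a\in Rc_1\cap Rc_2$, not $Ra=Rc_1\cap Rc_2$, so ``right indecomposable'' is a priori stronger than indecomposability of the module $R/Ra$.) Since the implication you invoke is false, the preservation of right indecomposability under passing from $b_j$ to its left divisor $v_j$ is not established, and this is exactly the step your proof cannot skip; it would need a different argument (for instance, re-decomposing a decomposable $v_j$ via Theorem \ref{(T)Partialfractiondecomposition} into fractions with denominators that are left divisors of $v_j$ of strictly smaller degree, and running the induction on a well-founded measure such as the multiset of denominator degrees).
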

		\begin{proof}	
			Let $S$ be the set of all partial fraction decompositions
				$$P(T)Q(T)^{-1}=P_0(T)+\sum_{i=1}^m P_i(T)Q_i(T)^{-1},$$
			where $P_0(T),P_1(T),Q_1(T),...,P_m(T),Q_m(T)\in K[T;\sigma]$,  and each $Q_i(T)$ is right indecomposable of degree $>0$.
			By Theorem \ref{(T)Partialfractiondecomposition}, the set $S$ is nonempty. Therefore, we can choose $$P(T)Q(T)^{-1}=P_0(T)+\sum_{i=1}^m P_i(T)Q_i(T)^{-1}$$
			in $S$ for which $\sum_{i=1}^m\deg P_i$ is as small as possible. I claim that this decomposition satisfies the desired conditions. Assume, on the contrary, that $\deg P_i\geq \deg Q_i$ for some $i$, say $i=1$. 
			Using the Euclidean division algorithm in $K[T;\sigma]$, we write  
			$$P_1(T)=P'(T)Q_1(T)+R(T),$$
			where $P'(T),R(T)\in K[T;\sigma]$ and $\deg R<\deg Q.$ 
			Then, the decomposition 
			$$P(T)Q(T)^{-1}=\left( P_0(T)+P'(T)\right) +\left( R(T)Q_1(T)^{-1}+\sum_{i=2}^m P_i(T)Q_i(T)^{-1}\right) ,$$ belongs to $S$, which is a contradiction since 
			$$\deg R+\sum_{i=2}^m\deg P_i<\deg Q+\sum_{i=2}^m\deg P_i\leq \sum_{i=1}^m\deg P_i.$$
			 This completes the proof of the claim. If some $P_i(T)Q_i(T)^{-1}$ is not reduced, we can replace $P_i(T)Q_i(T)^{-1}$ by a reduced fraction whose numerator has a smaller degree than the degree of $P_i(T)$, contradicting the choice of the partial fraction decomposition. This completes the proof. 			 
		\end{proof}	

		Next, we show that a stronger result is valid in the case of $2$-algebraically closed $\s$-fields.

		\begin{theorem}\label{(T)Partialfractiondecompositioninalgebraicallyclosed}
			Let $(K,\s)$ be a $2$-algebraically closed inversive $\s$-field. Any fraction $P(T)Q(T)^{-1}\in K(T;\s)$  has a partial fraction decomposition of the form
			$$P(T)Q(T)^{-1}=P_0(T)+\sum_{i=1}^m a_iT^{-i}+\sum_{i=1}^n b_i(1-c_iT)^{-1},$$
			where $P_0(T)\in K[T;\sigma]$, $a_1,...,a_m\in K$, $b_1,...,b_n,c_1,...,c_n\in K^*$,  and  $c_1,...,c_n$ are (right) P-independent.  Moreover,  if 
			$$P(T)Q(T)^{-1}=P_1(T)+\sum_{i=1}^{m_1} a'_iT^{-i}+\sum_{i=1}^{n_1} b'_i(1-c'_iT)^{-1},$$
			is another such decomposition, then we have
			$P_0(T)=P_1(T)$, $m=m_1$, and  $a_i=a'_i$ for all $i$. 
		\end{theorem}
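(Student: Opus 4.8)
The plan is to derive existence from the general partial fraction decomposition of Theorem \ref{(T)Partialfractiondecompositioninsigmafields} by classifying the right-indecomposable denominators that can occur over a $2$-algebraically closed inversive $\s$-field, and to obtain the (partial) uniqueness from two valuations on $K(T;\s)$. For existence I would first apply Theorem \ref{(T)Partialfractiondecompositioninsigmafields} to write
$$P(T)Q(T)^{-1}=P_0(T)+\sum_{i=1}^m P_i(T)Q_i(T)^{-1},$$
with each $Q_i(T)$ right indecomposable of positive degree and each fraction reduced and proper. The key lemma is a classification: a polynomial $Q(T)$ of positive degree is right indecomposable iff it is a left scalar multiple of $T^{k}$ ($k\geq 1$) or of $T-c$ ($c\in K^{*}$). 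To prove this I would use the dictionary between right divisors of $Q$ and submodules of the finite-length left module $K[T;\s]/K[T;\s]Q$: the element $Q$ is right indecomposable exactly when this module is hollow, i.e.\ has a unique maximal submodule. Since $(K,\s)$ is $1$-algebraically closed, every irreducible factor is linear by Proposition \ref{(P)Factorizationinalgebraicallyclosed}, so maximal submodules correspond to the distinct roots of $Q$; hence $Q$ is right indecomposable iff it has exactly one root. If that root is nonzero then $\deg Q=1$ (degree $\geq 2$ with nonzero constant term would force at least two distinct roots, by $2$-algebraic closedness), giving $Q\sim T-c$; if the root is $0$ then $Q(0)=0$ and $Q$ has no nonzero root, and I would show $Q\sim T^{k}$ by induction: writing $Q=Q_1T$ and using the product formula (Lemma \ref{(L)ProductPQ(a)}), for $e\neq 0$ one has $Q(e)=Q_1(\s(e))e$, so the nonzero roots of $Q$ are exactly $\s^{-1}$ of those of $Q_1$; thus $Q_1$ has no nonzero root, hence (by $1$-algebraic closedness) $Q_1(0)=0$ unless $Q_1$ is constant, and induction gives $Q_1\sim T^{k-1}$.

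Granting the classification, each summand $P_iQ_i^{-1}$ takes one of two shapes. When $Q_i=u_iT^{k_i}$, expanding $P_i=\sum_{l}p_lT^{l}$ and moving scalars across powers of $T$ via $T^{-j}u=\s^{-j}(u)T^{-j}$ (which uses inversivity) turns $P_iQ_i^{-1}$ into a sum $\sum_l a_lT^{-l}$. When $Q_i=u_i(T-c_i)$ with $c_i\neq 0$, the numerator is a scalar $b$, and the identities $T-c=-c(1-c^{-1}T)$ together with $(1-dT)^{-1}\gamma=\gamma(1-d'T)^{-1}$, where $d'=\gamma^{-1}d\s(\gamma)$, rewrite $bQ_i^{-1}$ as a single term $b'(1-c'T)^{-1}$. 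Collecting terms gives a decomposition of the stated shape, except that the resulting $c$'s need not be P-independent. To fix this, whenever $\{c_1,\dots,c_n\}$ is P-dependent I would invoke Proposition \ref{(P)FormulationofVandermonde}: there are scalars $\beta_i$, not all zero, with $\sum_i\beta_i(1-c_iT)^{-1}\in K[T;\s]$; solving for one $(1-c_{i_0}T)^{-1}$ and substituting removes that term at the cost of a polynomial, absorbed into $P_0$, and a change in the remaining coefficients. Each step strictly decreases the number of terms, so after finitely many steps the surviving $c$'s are P-independent.

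For uniqueness of $P_0$, $m$ and the $a_i$ I would compare two such decompositions using two valuations. First embed $K(T;\s)\subset K((T;\s))$; since $(1-cT)^{-1}=\sum_{j\geq 0}N_j(c)T^{j}$ lies in $K[[T;\s]]$ and $P_0\in K[T;\s]$, the entire negative part of the Laurent expansion of $P(T)Q(T)^{-1}$ equals $\sum_i a_iT^{-i}$, so comparing coefficients of the negative powers forces $m=m_1$ and $a_i=a_i'$. Subtracting these equal tails leaves $P_0-P_1=\sum_i b_i'(1-c_i'T)^{-1}-\sum_i b_i(1-c_iT)^{-1}$; applying the degree valuation $\deg(AB^{-1})=\deg A-\deg B$ on $K(T;\s)$ (a valuation because $\deg(fg)=\deg f+\deg g$ in $K[T;\s]$), the right-hand side has degree $\leq -1$ whereas a nonzero polynomial has degree $\geq 0$, so $P_0=P_1$. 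Notably, P-independence is not needed for uniqueness; it is only used to normalize the existence statement.

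The main obstacle is the classification lemma, and within it the case of a unique root at $0$: the real content is that, over a $2$-algebraically closed inversive $\s$-field, a positive-degree polynomial whose only root is $0$ must be a scalar multiple of a power of $T$. The inductive argument via the product formula is what makes this work, and it is precisely the step where inversivity of $\s$ is essential.
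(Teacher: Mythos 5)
Your argument is correct and follows the same overall architecture as the paper's proof: apply Theorem \ref{(T)Partialfractiondecompositioninsigmafields}, classify the right-indecomposable denominators as left scalar multiples of $T^k$ or of $1-cT$, enforce P-independence of the $c_i$ via Proposition \ref{(P)FormulationofVandermonde}, and obtain the partial uniqueness by comparing the negative part of the Laurent expansion in $K((T;\s))$ and then degrees. The one place where you genuinely diverge is the proof of the classification lemma. The paper argues directly: writing $Q(T)=Q_1(T)T^m=T^mQ_2(T)$ (which uses inversivity), right-coprimality of $T^m$ and $Q_2$ forces $m=0$ or $Q_1$ constant; and when $m=0$ and $\deg Q\geq 2$, $2$-algebraic closedness yields two distinct roots $a_1\neq a_2$, so $Q=R_1(T)(T-a_1)=R_2(T)(T-a_2)$ exhibits a right decomposition. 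You instead pass through the left module $K[T;\s]/K[T;\s]Q$, identify right indecomposability with hollowness (a unique maximal submodule), use Proposition \ref{(P)Factorizationinalgebraicallyclosed} to match maximal submodules with roots, and settle the ``only root is $0$'' case by induction with the product formula. Both are sound; the paper's version is shorter and more elementary, while yours isolates the cleaner intermediate statement that, over such a field, right indecomposability of a positive-degree polynomial is equivalent to having exactly one root. Your treatment of the P-independence reduction and of the uniqueness claim coincides with, and usefully fleshes out, the steps the paper leaves to the reader.
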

		\begin{proof}	
			First, we show that a nonconstant polynomial $P(T)\in K[T;\sigma]$ is right indecomposable iff $P(T)=aT^m$ for some $m>0$ and $a\in K^*$, or $P(T)=a(1-cT)$ for some $a,c\in K^*$. To prove the "only if" part of this statement, assume that $P(T)\in K[T;\sigma]$ is right indecomposable. There is $m\geq 0$ and $Q(T)\in K[T;\sigma]$ with a nonzero constant term such that $P(T)=Q(T)T^m$. Since $(K,\s)$ is inversive, there exists $Q_1(T)\in K[T;\sigma]$ with a nonzero constant term such that $$P(T)=Q(T)T^m=T^mQ_1(T).$$
			Since $Q_1(T)$ and $T^m$ are right co-prime, $m$ must be zero or $Q(T)$ must be a constant. In the latter case, we have $P(T)=aT^m$ for some $a\in K^*$. In the former case, $Q(T)$ must be a linear skew polynomial, since otherwise the fact that $(K,\s)$ is $2$-algebraically closed implies that $Q(T)$ has at least two distinct roots, say $a_1\neq a_2$. It follows that 
			$$Q(T)=R_1(T)(T-a_1)=R_2(T)(T-a_2),$$    
			for some polynomials $R_1(T),R_2(T)\in K[T;\sigma]$. Since $T-a_1$ and $T-a_2$ are right co-prime, $Q(T)$ must be right decomposable, a contradiction. This completes the proof of the "only if" part.  Conversely, it is clear that any linear skew polynomial is right indecomposable. Since $T^m=P(T)Q(T)$ implies that $P(T)=aT^n$ and  $Q(T)=(1/a)T^{m-n}$ for some $0\neq n\neq m$, we see that $T^m$ is right indecomposable. This completes the proof of the statement. Now, we can prove the theorem: It follows from the above statement and  Theorem \ref{(T)Partialfractiondecompositioninsigmafields} that any $P(T)Q(T)^{-1}\in K(T;\s)$  has a partial fraction decomposition of the form
			$$P(T)Q(T)^{-1}=P_0(T)+\sum_{i=1}^m a_iT^{-i}+\sum_{i=1}^n b_i(1-c_iT)^{-1},$$
			where $P_0(T)\in K[T;\sigma]$, $a_1,...,a_m,b_1,...,b_n\in K$, and the elements $c_1,...,c_n$ belong to $K^*$. Using Proposition \ref{(P)FormulationofVandermonde}, it is now easy to show that there is such a partial fraction decomposition which, in addition, satisfies the requirement that $c_1,...,c_n$ be P-independent. 
			
			 To prove the uniqueness part, we first note that the summand $\sum_{i=1}^m a_iT^{-i}$ does not belong to $K[[T;\s]]^*$ while the other summands belong to  $K[[T;\s]]$, from which the equality $\sum_{i=1}^m a_iT^{-i}=\sum_{i=1}^{m_1} a'_iT^{-i}$ follows. Finally, using the degree function on $K(T;\s)$, one can easily show that  $P_1(T)$ must be equal to $P_0(T)$. 
	\end{proof}	
	
	\end{subsection}
	%%%%%%%%%%%%%%%%%%%%%%%%%%%%%%%%%%%%%%%%%%%%%%%%%%%%%%%%%%%%%%%%
	%%%%%%%%%%%%%%%%%%%%%%%%%%%%%%%%%%%%%%%%%%%%%%%%%%%%%%%%%%%%%%%%
%%%%%%%%%%%%%%%%%%%%%%%%%%%%%%%%%%%%%%%%%%%%%%%%%%%%%%%%%%%%%%%%
\begin{subsection}{The twisted Hadamard algebra}
	We conclude this paper with an application for which we need to recall the definition of the twisted Hadamard algebra over a $\s$-field. For more details, see \cite{aryapoor2022rational}. Let $(K,\s)$ be an inversive $\s$-field. An element in $K[[T,\s]]$ is called a rational twisted series if it belongs to the subring $K(T;\s)$. A rational twisted series is called regular if its degree is negative. The set of all regular rational series over $(K,\s)$ is denoted by $H(T;\s)$. In \cite{aryapoor2022rational}, it is shown that the set $H(T;\s)$ equipped with the addition operation inherited from $K[[T,\s]]$ and the Hadamard multiplication $\odot$, that is, 
	$$\left( \sum_{i=0}^\infty a_i T^i\right) \odot \left(\sum_{i=0}^\infty b_i T^i\right) =
	\sum_{i=0}^\infty a_i b_i T^i,$$	
	is a $K$-algebra, called the twisted Hadamard algebra over $(K,\s)$. Here, we give a characterization of the  twisted Hadamard algebra in the case when $(K,\s)$ is 2-algebraically closed. For a characterization of the ordinary Hadamard algebra in terms of exponential polynomials, see \cite{berstel1988rational}. In the following theorem, we use the $K$-algebra $N_{K^*}$  introduced in Subsection \ref{(S)preliminaries}. 
	\begin{theorem}\label{(T)Hadamard}
		There exists a unique $K$-algebra homomorphism 
		$$\alpha:N_{K^*}\to H(T;\s),$$
		which satisfies $\alpha(N(a))=(1-aT)^{-1}$ for all $a\in K^*$. Furthermore, $\alpha$ is injective, and, if $K$ is a  2-algebraically closed inversive $\s$-field,  $\alpha$ is an isomorphism of $K$-algebras. 
	\end{theorem}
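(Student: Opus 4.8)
The plan is to obtain $\alpha$ as the restriction of a single ``Hadamard identification'' of $K^\infty$ with skew power series, and then to read off surjectivity from the partial fraction decomposition of Theorem \ref{(T)Partialfractiondecompositioninalgebraicallyclosed}. First I would introduce the $K$-linear bijection
$$\Phi:K^\infty\to K[[T;\s]],\qquad \Phi\big((a_0,a_1,a_2,\dots)\big)=\sum_{i=0}^{\infty}a_iT^i .$$
Because componentwise multiplication on $K^\infty$ (the algebra structure introduced in Subsection \ref{(S)preliminaries}) corresponds exactly to the Hadamard product $\odot$, this $\Phi$ is an isomorphism of $K$-algebras from $K^\infty$ onto $(K[[T;\s]],\odot)$; it sends the unit $N(1)=(1,1,1,\dots)$ to $\sum_iT^i=(1-T)^{-1}$, and, by the expansion $(1-aT)^{-1}=\sum_{i\geq 0}N_i(a)T^i$, it sends $N(a)$ to $(1-aT)^{-1}$ for every $a\in K^*$. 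Since each $(1-aT)^{-1}$ is a regular rational twisted series and $H(T;\s)$ is closed under $\odot$, addition and $K$-scalars, the restriction $\alpha:=\Phi|_{N_{K^*}}$ carries $N_{K^*}$ into $H(T;\s)$ and is a $K$-algebra homomorphism with $\alpha(N(a))=(1-aT)^{-1}$; it is injective simply because $\Phi$ is. This disposes of existence and injectivity simultaneously, and uniqueness is immediate: $N_{K^*}$ is the $K$-span of the vectors $N(a)$, so a $K$-linear map out of it is pinned down by its values on them. I would also note that no well-definedness issue arises from linear relations among the $N(a)$, since by Propositions \ref{(P)reformulationofVandermonde} and \ref{(P)FormulationofVandermonde} a relation $\sum_i b_iN(c_i)=0$ holds exactly when $\sum_i b_i(1-c_iT)^{-1}=0$.

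For surjectivity, assume $(K,\s)$ is $2$-algebraically closed and inversive and take an arbitrary $x\in H(T;\s)$. Writing $x=P(T)Q(T)^{-1}$ and applying Theorem \ref{(T)Partialfractiondecompositioninalgebraicallyclosed}, I would get
$$x=P_0(T)+\sum_{i=1}^{m}a_iT^{-i}+\sum_{i=1}^{n}b_i(1-c_iT)^{-1},$$
with $P_0(T)\in K[T;\s]$, $a_i\in K$, $b_i,c_i\in K^*$ and $c_1,\dots,c_n$ P-independent. Expanding as a Laurent series, the only negative powers of $T$ come from the terms $a_iT^{-i}$; since $x\in K[[T;\s]]$ has none, uniqueness of the expansion forces $a_1=\dots=a_m=0$. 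The remaining identity $P_0(T)=x-\sum_i b_i(1-c_iT)^{-1}$ expresses $P_0$ as a difference of regular rational series, hence as a rational series of negative degree; a polynomial of negative degree is $0$, so $P_0=0$. Therefore $x=\sum_i b_i(1-c_iT)^{-1}=\alpha\big(\sum_i b_iN(c_i)\big)$ with $\sum_i b_iN(c_i)\in N_{K^*}$, and $x$ lies in the image of $\alpha$.

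The formal half (existence, uniqueness, injectivity) is little more than bookkeeping around $\Phi$, so the hard part will be the surjectivity step, and specifically the clean extraction of the reduced partial fraction shape from the two defining conditions on $H(T;\s)$: the power-series condition annihilates the principal part $\sum a_iT^{-i}$, while the regularity (negative-degree) condition annihilates the polynomial part $P_0$. Making the latter reduction precise relies on the degree function on $K(T;\s)$ together with the fact, from \cite{aryapoor2022rational}, that $H(T;\s)$ is closed under addition, so that $P_0$ inherits negative degree; this is also the only point at which the hypothesis that $(K,\s)$ be $2$-algebraically closed and inversive is genuinely used, namely through Theorem \ref{(T)Partialfractiondecompositioninalgebraicallyclosed}.
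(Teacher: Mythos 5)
Your proposal is correct and follows essentially the same route as the paper: both obtain $\alpha$ by restricting the Hadamard identification $K^\infty\to (K[[T;\s]],\odot)$ to $N_{K^*}$ and deduce surjectivity from Theorem \ref{(T)Partialfractiondecompositioninalgebraicallyclosed}. The only difference is that you spell out the surjectivity step (killing the principal part via membership in $K[[T;\s]]$ and the polynomial part via the negative-degree condition), which the paper leaves implicit.
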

	\begin{proof}
		The vector space $K^\infty $, as defined in Subsection \ref{(S)preliminaries}, is a $K$-algebra under pointwise multiplication.  Consider the map $\beta:K^\infty\to K[[T;\s]]$ defined by 
		$$\beta(a_0,a_1,a_2,...)=\sum_{i=0}^\infty a_i T^i.$$
		It is easy to see that $\beta$ is a $K$-algebra homomorphism where $K[[T;\s]]$ is equipped with the Hadamard product. Let $\alpha:N_{K^*}\to K[[T;\s]]$ be the restriction of $\beta$ to $N_{K^*}$.  Note that $(1-aT)^{-1}\in H(T;\s)$, for all $a\in K^*$. Since  $N_{K^*}$, as a vector space, is generated by $N(a)$'s, and $H(T;\s)$ is closed under the Hadamard product, we see that the image of $\alpha$ is contained in $H(T;\s)$. The uniqueness of $\alpha$ follows from the fact that the elements $N(a)$, $a\in K$, generate $N_{K^*}$ as a vector space over $K$. Clearly, $\alpha$ is injective. The fact that $\alpha$ is surjective when $K$ is a $2$-algebraically closed inversive $\s$-field follows from Theorem  \ref{(T)Partialfractiondecompositioninalgebraicallyclosed}.      
		
	\end{proof}
	As a consequence of this result, we have the following:
	\begin{corollary}
		Let $\sum_{i=0}^{\infty}a_iT^i$ be a rational twisted series over a $2$-algebraically closed inversive $\s$-field $K$. Then, there exist element  $b_1,...,b_n,c_1,...,c_n\in K^*$ such that 
		$$a_i=\sum_{j=1}^n b_jN_i(c_j),$$
		for all large enough $i$. 		
	\end{corollary}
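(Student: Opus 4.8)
The plan is to read the claim off from the partial fraction decomposition of Theorem \ref{(T)Partialfractiondecompositioninalgebraicallyclosed}, exploiting that a rational twisted series is, by definition, an element of both $K(T;\s)$ and $K[[T;\s]]$. Write $f(T)=\sum_{i=0}^\infty a_iT^i$. Regarded as a fraction in $K(T;\s)$, Theorem \ref{(T)Partialfractiondecompositioninalgebraicallyclosed} gives
$$f(T)=P_0(T)+\sum_{i=1}^m a'_iT^{-i}+\sum_{j=1}^n b_j(1-c_jT)^{-1},$$
where $P_0(T)\in K[T;\s]$, $a'_i\in K$ and $b_j,c_j\in K^*$.

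First I would show that the negative tail $\sum_{i=1}^m a'_iT^{-i}$ is absent. Since the inclusions $K[T;\s]\subset K[[T;\s]]$ and $K(T;\s)\subset K((T;\s))$ are compatible with $K[[T;\s]]\subset K((T;\s))$, the identity above may be read inside $K((T;\s))$. There each $(1-c_jT)^{-1}$ equals the power series $\sum_{i\geq 0}N_i(c_j)T^i$ (as in the proof of Proposition \ref{(P)FormulationofVandermonde}), and $P_0(T)$ is a polynomial, so $P_0(T)+\sum_{j=1}^n b_j(1-c_jT)^{-1}\in K[[T;\s]]$. As $f(T)\in K[[T;\s]]$ by hypothesis, subtraction forces $\sum_{i=1}^m a'_iT^{-i}\in K[[T;\s]]$; but a nonzero element of this form has strictly negative degree, so $a'_1=\cdots=a'_m=0$. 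Hence $f(T)=P_0(T)+\sum_{j=1}^n b_j(1-c_jT)^{-1}$.

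It then remains to compare coefficients. Expanding gives
$$f(T)=P_0(T)+\sum_{i\geq 0}\Bigl(\sum_{j=1}^n b_jN_i(c_j)\Bigr)T^i,$$
so for every $i>\deg P_0$ the polynomial part contributes nothing and we obtain $a_i=\sum_{j=1}^n b_jN_i(c_j)$. Discarding those $j$ with $b_j=0$, we may assume each $b_j\in K^*$, which yields the stated form (with $n=0$ and an empty sum in the degenerate case that $f$ is a polynomial, so that the coefficients are eventually zero). Conceptually this is just the statement that the regular part $f(T)-P_0(T)$ lies in the image of the isomorphism $\alpha$ of Theorem \ref{(T)Hadamard}, the $i$-th coefficient of $\alpha\bigl(\sum_j b_jN(c_j)\bigr)$ being by construction $\sum_j b_jN_i(c_j)$.

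The only substantive step is the vanishing of the negative tail: it is precisely where the hypothesis $f(T)\in K[[T;\s]]$ enters, and it relies on matching the single algebraic identity furnished by Theorem \ref{(T)Partialfractiondecompositioninalgebraicallyclosed} against the prescribed power series term-by-term in $K((T;\s))$. Everything after that is routine coefficient bookkeeping.
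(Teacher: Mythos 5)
Your proof is correct and follows essentially the same route as the paper: the paper's one-line proof splits the series into a skew polynomial plus a regular part and invokes the surjectivity of $\alpha$ from Theorem \ref{(T)Hadamard}, which itself rests on Theorem \ref{(T)Partialfractiondecompositioninalgebraicallyclosed}. You simply unpack that chain by applying the partial fraction decomposition directly and checking that the $T^{-i}$ tail vanishes inside $K((T;\sigma))$ — a step the paper leaves implicit — so the difference is one of exposition, not of substance.
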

	\begin{proof}
		Since every rational twisted series is a sum of a skew polynomial and a regular series, the result follows from Theorem \ref{(T)Hadamard}. 
	\end{proof}
	
\end{subsection}
%%%%%%%%%%%%%%%%%%%%%%%%%%%%%%%%%%%%%%%%%%%%%%%%%%%%%%%%%%%%%%%%
%%%%%%%%%%%%%%%%%%%%%%%%%%%%%%%%%%%%%%%%%%%%%%%%%%%%%%%%%%%%%%%%
\textbf{Acknowledgements}\\
I would like to thank A. Leroy for
his valuable suggestions and comments which led to an improvement of the paper.
\end{section}

\bibliographystyle{plain}
\bibliography{SRbiblan}

 \end{document}